\title{On vector bundles for a Morse decomposition of $L(\CP^n)$}
\author{Iver Ottosen}
\address{Department of Mathematical Sciences\\
Aalborg University Copenhagen\\
A.C. Meyers V\ae nge 15\\
2450 K\o benhavn SV\\
Denmark}
\email{ottosen@math.aau.dk}
\theoremstyle{plain}
   \newtheorem{theorem}{Theorem}[section]
   \newtheorem*{theorem*}{Theorem}
   \newtheorem{proposition}[theorem]{Proposition}
   \newtheorem{lemma}[theorem]{Lemma}
   \newtheorem{corollary}[theorem]{Corollary}
   \theoremstyle{definition}
   \newtheorem{definition}[theorem]{Definition}
   \theoremstyle{remark}
   \newtheorem{remark}[theorem]{Remark}
\newcommand{\NN}{{\mathbb N}}
\newcommand{\ZZ}{{\mathbb Z}}
\newcommand{\FF}{{\mathbb F}}
\newcommand{\RR}{{\mathbb R}}
\newcommand{\CC}{{\mathbb C}}
\newcommand{\HH}{{\mathbb H}}
\newcommand{\OO}{{\mathbb O}}
\newcommand{\TT}{{\mathbb T}}
\newcommand{\PP}{{\mathbb P}}
\newcommand{\CP}{{\mathbb{C}\mathrm{P}}}
\newcommand{\HP}{{\mathbb{H}\mathrm{P}}}
\newcommand{\CaP}{{\mathbb{O}\mathrm{P}}}
\newcommand{\Gr}[2]{{{\mathbf G}_{#1}(\CC^{#2})}}
\newcommand{\St}[2]{{{\mathbf V}_{#1}(\CC^{#2})}}
\newcommand{\Sta}[3]{{{\mathbf V}_{#1 , #3}(\CC^{#2})}}
\newcommand{\PStnull}[2]{{{\mathbf{PV}}_{#1}(\CC^{#2})}}
\newcommand{\PSt}[3]{{{\mathbf{PV}}_{#1 , #3}(\CC^{#2})}}
\newcommand{\PV}{{\mathbf{PV}}}
\newcommand{\B}[2]{{\mathbf{B}_{#1} (\CP^{#2} )}}
\newcommand{\hp}{{\rho}}
\newcommand{\fs}{{d\rho}}
\newcommand{\FC}{{\mathcal F}}
\newcommand{\Th}{{\mathrm{Th}}}
\DeclareMathOperator{\linspan}{span}
\DeclareMathOperator{\real}{Re}
\newcommand{\inp}[2]{{\langle #1, #2 \rangle}}
\newcommand{\inpl}[2]{{\langle \langle #1, #2 \rangle \rangle}}
\begin{document}

\begin{abstract}
We give a description of the negative bundles for the energy integral on the free loop space $L\CP^n$ in terms of
circle vector bundles over projective Stiefel manifolds. We compute the mod $p$ Chern classes of 
the associated homotopy orbit bundles.
\end{abstract}

\maketitle

\section{Introduction}
\label{sec:Intro}

This paper is a part of a program to study the homotopy type of the free loop space of a smooth manifold $M$.  
Our main interest is to understand the $\TT=S^1$-equivariant homotopy type. More precisely, we try to get information 
about the mod $p$ equivarant cohomology as a module over the Steenrod algebra.

We remark that this module is closely related to the cohomology of the topological cyclic homology spectrum $TC(M , p)$ \cite{BHM}. The topological cyclic homology spectrum is in turn an approximaton to the algebraic $K$-theory of $M$.

A general strategy for this is to equip the manifold with a Riemannian metric and consider the Morse theory of the energy functional $E$ defined by this metric. Since the energy is invariant under rotation of the loops, this captures not just the 
ordinary homotopy type of the loop space, but also the equivariant homotopy type.    

We focus on a very special case, namely the free loop space on a complex projective space. We choose the Riemannian metric to be the usual (Fubini-Study) metric. We consider this as a special case which might throw light on the general situation. 

However, another motivation for examining this special case closely comes from the unsolved closed geodesics problem:
Does any Riemannian metric on a compact simply connected smooth manifold $M$ of dimension greater than one 
admit infinitely many geometrically distinct closed geodesics? The answer is affirmative if the rational cohomology ring
of $M$ requires at least two generators (Vigu\' e-Puirrier \& Sullivan, Gromoll \& Meyer) or if $M$ is a globally 
symmetric space of rank larger than one (Ziller). It is also affirmative for 
the 2-sphere (Bangert, Franks, Angenent, Hingston). The most prominent examples
where the answer is not known are the spheres $S^m$, $m\geq 3$ together with the projective spaces 
$\CP^n$ (for $n\geq 2$), $\HP^n$ and Cayley's projective plane ${\CaP}^2$.

In this game, Morse theory of the energy integral on the free loop space $LM$ plays a central role. Therefore 
it is interesting to gather as much information as possible on the bundles controlling the Morse decomposition.

In \cite{KlingProj} Klingenberg studies the non-equivariant Morse theory of the free loop spaces on a projective space $LP^n$. Complex and quaternionic projective spaces as well as the Cayley projective plane are considered. Critical points for the energy integral are closed geodesics of various energy levels $0=e_0<e_1<\dots$. Those of energy level 
$e_q$ form a finite dimensional critical submanifold $B_q$ of $LP^n$.  There is a so-called negative vector bundle 
$\mu_q^-$ over $B_q$ which is essentially the tangent space of the unstable manifold given by exiting negative gradient trajectories. The energy levels also give a filtration of the free loop space $\FC (e_q)=E^{-1}([0,e_q])$.
Morse theory in this setting states that $\FC (e_q)$ is essentially obtained by attaching to
$\FC (e_{q-1})$ the disc bundle of $\mu_q^-$. One of the results in Klingenberg's article is a concrete calculation of
the negative bundles.

By the invariance of the energy functional the filtration is an equivariant filtration. The negative bundles will be 
$\TT$-equivariant bundles, so that they induce vector bundles on the Borel construction on $B_q$. 
We obtain a filtration of the Borel construction $ES^1\times_{S^1} LP^n$.  
The filtration quotients are the Thom spaces of these homotopy orbit bundles over $ES^1\times_{S^1} B_q$. 

The purpose of this paper is firstly to give a simpler description of the negative bundles for the complex projective
spaces as $\TT$-vector bundles over projective Stiefel manifolds (Theorem \ref{main1} and Definition \ref{def:nu}). Secondly, we calculate the mod $p$ Chern classes of the associated homotopy orbit bundles (Theorem \ref{main2}). This determines the action of the Steenrod algebra on the corresponding Thom spaces.
 
These results are partly motivated by \cite{BO5} where we compute the mod $p$ equivariant cohomology of $L\CP^n$ with respect to the action of the circle group $\TT$. The calculation uses the spectral
sequence coming from the energy filtration. This is a spectral sequence of modules over the Steenrod algebra. The computations in the present paper determines this action on the first page of the spectral sequence, and our hope is that this can lead to a computation of the Steenrod algebra action on $H^*_\TT (L\CP^n ; \FF_p )$.  

There is an alternative way of computing equivariant cohomology of $L\CP^n$. This uses the formality of the homotopy type of $\CP^n$ together with computations in cyclic homology. The method is described in \cite{NEH}. At the moment, it does not seem clear how to obtain the action of the Steenrod algebra from this method. However, there is no reason to believe that it is inherently impossible to do this, and our computation might very well help in understanding the relation between cyclic homology and cohomology operations.

\section{Morse theory for free loop spaces}
\label{sec:Morse}

In this section we recall some results on Morse theory for the
energy integral on the Hilbert manifold model of the free loop space.
For details on this we refer to \cite{KlingSphere}.

Let $M$ be a compact Riemannian manifold equipped with the Levi-Civita
connection. We use the Hilbert manifold model of the free loop space $LM$. 
Write the circle as $S^1 = [0,1]/\{ 0, 1\}$. An element in $LM$ is 
an absolutely continuous map $f:S^1 \to M$ such that $f^\prime$ is square integrable
ie. $\int_0^1 |f^\prime (t)|^2 dt < \infty$. The Hilbert manifold model is 
homotopy equivalent to the usual continuous mapping space model.

The tangent space $T_f(LM)$ is the set of absolutely continuous 
tangent vector fields $X$ along $f$ such that the covariant derivative $DX(t)/dt$ 
is square integrable.  The free loop space $LM$ is equipped with a Riemannian metric 
$\inpl \cdot \cdot$ as follows:
\[
\inpl X Y = \int_0^1 \inp {\frac {DX} {dt}(t)} {\frac {DY} {dt}(t)} +
\inp {X(t)} {Y(t)} dt,
\]
where $X, Y \in T_f(LM)$.

The energy integral (or energy function) is defined by
\[ E:LM \to \RR; \quad 
E(f)=\frac 1 2 \int_0^1 |f^\prime(t)|^2 dt.  \]
The critical points for $E$ are precisely the closed geodesic on $M$. 
For a critical point $f$, the Hessian of $E$ has the following form:
$H_f(\cdot , \cdot) : T_f(LM)\times T_f(LM) \to \RR$;
\[
H_f(X,Y) = \int_0^1 \inp {\frac {DX} {dt}(t)} {\frac {DY} {dt}(t)} + 
\inp {R(X(t),f^\prime (t))f^\prime (t)} {Y(t)} dt, 
\]
where $R(\cdot , \cdot )\cdot$ denotes the curvature tensor on $M$.
The Hessian determines a self adjoint operator 
$A_f$ on $T_f (LM)$ satisfying $H_f (X, Y) = \inpl {A_f (X )} Y$ 
for all $X$ and $Y$. The operator $A_f$ is the sum of the identity 
with a compact operator, so there are at most a finite number of negative
eigenvalues, each corresponding to a finite dimensional vector space 
of eigenvectors of $A_f$. The kernel of $A_f$, which
is also finite dimensional, consists of the periodic Jacobi fields 
along $f$. 

Now let $N(e)$ be the space of critical points of $E$ with energy 
level $e$. It is known that $-\text{grad} \medspace E$ satisfy 
condition (C) of Palais and Smale so that one can do Morse 
theory on $LM$ if some additional non-degeneracy condition is 
satisfied. For us the so called Bott non-degeneracy condition is 
the relevant one. It requires firstly that for each critical 
value $e$ the space $N(e)$ is a compact submanifold 
of $LM$ and secondly that for each  $f \in N(e)$ the 
restriction of the Hessian $H_f$ to the complement 
$(T_f N(e))^\perp$ of $T_f N(e)$ in $T_f (LM)$ is non-degenerate.
The Bott non-degeneracy condition is a strong assumption
on the metric of $M$, but for instance the symmetric spaces
satisfy this, according to \cite[Theorem 2]{Ziller}.

Assume that the Bott non-degeneracy condition holds.
The negative bundle $\mu^- (e)$ over 
$N(e)$ is the vector bundle whose fiber at $f$ is the 
vector space spanned by the eigenvectors belonging to negative 
eigenvalues of $A_f$. Similarly, $\mu^0 (e)$ and 
$\mu^+ (e)$ are the vector bundles with fibers spanned by 
the eigenvectors corresponding to the eigenvalue $0$ and the 
positive eigenvalues respectively.

Let the critical values of the energy function be
$0=e_0 < e_1 <\dots $. Consider the filtration of
$LM$ given by $\FC (e_i) = E^{-1}([0 , e_i ])$.
This filtration is equivariant with respect to the action of
the circle. 

The tangent bundle of $LM$ restricted to
$N(e_i )$ splits ${\TT}$-equivariantly into a sum of three bundles.
\[
T(LM)|_{N(e_i )} \cong
\mu^- (e_i )\oplus \mu^0 (e_i )\oplus \mu^+ (e_i ).
\]
The standard Morse theory argument can be carried through
equivariantly on the Hilbert manifold $LM$. This was done 
by Klingenberg. For an account of this work see section 
\cite[2.4]{Klingenberg}, especially theorem 2.4.10. The 
statement of this theorem implies that we have an equivariant 
homotopy equivalence
\[
\FC (e_i )/\FC (e_{i-1} ) \simeq \Th (\mu^- (e_i )).
\]

\section{Klingenberg's calculation of negative bundles for projective spaces}
\label{sec:Klingenberg}

We will now focus on the projective spaces $P^n (\alpha )$ over the 
complex numbers $\CC$ for $\alpha = 2$, the quaternions $\HH$ for 
$\alpha = 4$ and the Cayley numbers $\OO$ for $\alpha = 8$. 
Note that $P^n(8)$ only exist when $n= 1$ or $n=2$.
These spaces are endowed with the Riemannian metric which makes them symmetric
spaces of rank one. This metric is determined up to a positive constant, which we 
fix by requiring the sectional curvature to have maximal value $2\pi^2$ and minimal 
value $\pi^2 /2$ \cite[1.1]{KlingProj}.

Klingenberg calculates the negative bundles for $L(P^n (\alpha ))$ in
\cite{KlingProj} and we will review this calculation.

Let $B_q(P^n(\alpha) )\subseteq LP^n(\alpha )$ denote the critical submanifold of 
$q$-fold covered primitive geodesics. A non-constant geodesics $f\in B_q(P^n(\alpha ))$ lies
on a unique projective line $S^\alpha \cong P^1(\alpha ) \subseteq P^n (\alpha )$.
For each $t\in [0,1]$ we split the tangent space at $f(t)$ into a horizontal 
subspace of tangent vectors to this projective line and its orthogonal 
complement called the vertical subspace \cite[1.3]{KlingProj}
\[ T_{f(t)}(P^n(\alpha )) = T_{f(t)}(P^n(\alpha ))_h \oplus T_{f(t)}(P^n(\alpha ))_v. \] 
The horizontal subspace has real dimension $\alpha$ and the vertical subspace 
has real dimension $\alpha (n-1)$. A tangent vector field 
$X \in T_f(P^n(\alpha ))$ decomposes into a horizontal component $X_h$ and a 
vertical component $X_v$ and this decomposition is compatible with the 
covariant derivative along $f$.

\begin{proposition}[Klingenberg]
Consider the parallel transport around a simple closed geodesics $f:[0,1] \to P^n(\alpha )$ 
with $f(0)=f(1)=p$. The horizontal subspace of $T_p(P^n (\alpha ))$ is carried into itself by the 
identity map. The vertical subspace is carried into itself by the reflection at the origin.
\end{proposition}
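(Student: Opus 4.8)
The plan is to exhibit the parallel transport explicitly by working on the projective line $P^1(\alpha)\cong S^\alpha$ containing $f$, and then to read off the action on the vertical subspace from the structure of the ambient symmetric space $P^n(\alpha)$. First I would observe that a simple closed geodesic $f$ is, up to reparametrization, the image of a great circle under the Hopf-type fibration $S^{\alpha(n+1)-1}\to P^n(\alpha)$; concretely one may take $f(t)=[\cos(\pi t)\,v_0+\sin(\pi t)\,v_1]$ for a unitary frame $v_0,v_1$ (over $\CC$, $\HH$ or $\OO$ as appropriate), since $f(0)=f(1)=[v_0]=p$. Parallel transport along $f$ is computed via the Levi-Civita connection of the Fubini-Study-type metric, which is the projection of the flat connection on the sphere to the horizontal distribution of the Riemannian submersion.

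For the horizontal part, I would note that the horizontal subspace $T_p(P^n(\alpha))_h$ is precisely the tangent space $T_pS^\alpha$ to the projective line, and the restriction of $f$ to this line is a closed geodesic on a round sphere $S^\alpha$ traversed once. Parallel transport once around a great circle on a round sphere brings every tangent vector back to itself (the holonomy of a full great circle is the identity, since the geodesic is a closed loop in the totally geodesic $S^\alpha$ and the parallel vector field rotates rigidly with the loop, returning after one full period). This gives the first assertion. One subtlety to check here is that the length of $f$ as a loop in $P^n(\alpha)$ is half the circumference of the corresponding great circle on $S^{\alpha(n+1)-1}$, so that $f$ closes up after the parameter interval $[0,1]$ even though the lifted curve does not; I would verify the holonomy statement directly on $S^\alpha$ rather than on the sphere bundle.

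For the vertical part, the key point is that $T_{f(t)}(P^n(\alpha))_v$ may be identified with $\mathrm{Hom}(\ell_t, \ell_t^\perp)$ where $\ell_t$ is the line $f(t)$ and $\ell_t^\perp$ is the orthogonal complement of the projective line's span inside $\CC^{n+1}$ (resp. over $\HH$, $\OO$); parallel transport in the vertical bundle then comes from parallel transport of $\ell_t$ along $f$ inside the trivial bundle, composed with the identity on the fixed subspace $\ell^\perp$. Since $f$ closes up as a loop in $P^n(\alpha)$ but the lift to the sphere satisfies $v(1)=-v(0)$ (the great semicircle from $v_0$ to $-v_0$), the line $\ell_t=\mathbb{K}\cdot v(t)$ returns to $\ell_1=\ell_0$ but the generator is negated. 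Therefore a vertical tangent vector, viewed as a homomorphism out of $\ell_t$, picks up the scalar $-1$ upon return: this is exactly the reflection at the origin.

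The main obstacle I anticipate is making the identification of the vertical subspace with $\mathrm{Hom}(\ell, \ell^\perp)$ rigorous and compatible with the Levi-Civita connection — i.e. showing that the induced connection on this homomorphism bundle is the one obtained from the flat connection on $\CC^{n+1}$ (resp. $\HH^{n+1}$, $\OO^{3}$) — and in particular handling the non-associativity in the octonionic case $\OO P^2$, where "$\mathrm{Hom}$" must be interpreted with care. For the associative cases this is a standard computation with the second fundamental form of the submersion $S^{\alpha(n+1)-1}\to P^n(\alpha)$ and the O'Neill formulas; for $\OO P^2$ one instead exploits that the problem is local along $f$ and that $f$ lies in a copy of $\OO P^1=S^8$, reducing to an explicit model. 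An alternative, cleaner route for the vertical statement is to invoke that $P^n(\alpha)$ is a symmetric space: parallel transport along a geodesic equals the differential of the transvection, and the transvection by the full period of a primitive geodesic acts on the vertical space as $-\mathrm{id}$ because it is the product of two geodesic symmetries whose composite, restricted to the normal directions of the projective line, squares the $-1$ eigenvalue coming from each symmetry — this is essentially the content of \cite[1.3]{KlingProj} and I would cite it if a short self-contained argument proves unwieldy.
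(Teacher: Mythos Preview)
Your proposal is correct and close in spirit to what the paper does, though the paper restricts itself to the complex case $\alpha=2$ and argues more concretely. Rather than invoking the identification $T_{f(t)}(\CP^n)_v \cong \Hom(\ell_t, W^\perp)$, the paper (in Lemma~\ref{lemma:vector fields}) simply writes down explicit parallel vector fields along $f$ via the Riemannian submersion $\rho: S^{2n+1}\to\CP^n$: with the lift $c(t)=(e^{-q\pi i t}u+e^{q\pi i t}v)/\sqrt{2}$, the horizontal field is $H(t)=d\rho_{c(t)}(e^{-q\pi i t}u-e^{q\pi i t}v)$ and the vertical fields are $V(w)(t)=d\rho_{c(t)}(w)$ for fixed $w\in\{u,v\}^\perp$. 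These are parallel because their preimages in $(\CC c(t))^\perp$ are parallel along the great circle $c$ on the sphere, and the endpoint behaviour $H(1)=H(0)$, $V(w)(1)=(-1)^q V(w)(0)$ follows immediately from $c(1)=(-1)^q c(0)$ together with the equivariance $d\rho_{zx}(zv)=d\rho_x(v)$. For $q=1$ this is exactly the Proposition.

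Your $\Hom$-formulation is a repackaging of the same computation --- a parallel section $\phi_t$ with $\phi_t(c(t))=w$ is precisely the field $d\rho_{c(t)}(w)$ --- but the paper's version sidesteps the connection-compatibility check you flag as an obstacle: one only needs that the Levi--Civita connection of $\CP^n$ lifts, on horizontal vectors, to the flat connection on the sphere, which is the standard fact for this Riemannian submersion. Your horizontal argument via holonomy of a great circle on $S^\alpha\cong P^1(\alpha)$ is fine and arguably cleaner than the paper's, which treats $H$ in the same explicit manner as $V$. For the quaternionic and octonionic cases the paper offers no proof and simply attributes the statement to Klingenberg; the symmetric-space transvection route you sketch at the end is indeed his.
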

We will not review Klingenberg's proof here. A proof for the complex projective space will however 
appear later in Lemma \ref{lemma:vector fields}.

\begin{lemma}[Klingenberg]
\label{KlingenbergLemma}
Let $f\in B_q(P^n(\alpha ))$ where $q$ is a positive integer. 
The Hessian $H_f(\cdot , \cdot )$ on $T_f(LP^n(\alpha ))$ has eigenvectors 
as follows:
\begin{enumerate}
\item
\[ X_p(t) = A\cos (2\pi pt)+B\sin (2\pi pt), \quad p\in \NN_0 , \]
where $A$ and $B$ are constant (i.e. parallel) horizontal vector fields along $f$
such that $\inp A {f^\prime (t)} = \inp B {f^\prime (t)} = 0$ for all $t$. The
eigenvalue for $X_p$ is
\[ \lambda_p = \frac {4\pi^2 (p^2-q^2)} {1+4\pi^2 p^2}. \]
We write $E_{h,p}$ for the vector space formed by the $X_p$'s for a fixed $p$. 
It has real dimension $\alpha -1$ for $p=0$ and $2(\alpha -1)$ for $p>0$.
\item 
\[ Y_r(t) = A \cos (\pi rt) + B\sin (\pi rt), \quad r \in \NN_0, \quad
r\equiv q \text{ mod } 2, \]
where $A$ and $B$ are constant vertical vector fields along $f$. The 
eigenvalue of $Y_r$ is
\[ \mu_r = \frac {\pi^2 (r^2-q^2)} {1+\pi^2 r^2}. \]
We write $E_{v,r}$ for the vector space formed by $Y_r$. It has real dimension
$\alpha (n-1)$ if $r=0$ and $2\alpha (n -1)$ if $r>0$.
\item 
\[Z_s(t) = (a\cos (2\pi st)+b\sin (2\pi st))f^\prime (t), \quad s\in \NN_0 ,\]
where $a, b\in \RR$. The eigenvalue for $Z_s$ is
\[ \nu_s = \frac {4\pi^2 s^2} {1+4\pi^2 s^2}. \]
We write $E_{t,s}$ for the vector space formed by $Z_s$. It has real dimension
$1$ for $s=0$ and $2$ for $s>0$.
\end{enumerate}
\end{lemma}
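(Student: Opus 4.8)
The lemma computes the eigenvectors and eigenvalues of the Hessian $H_f$ at a $q$-fold covered primitive geodesic $f \in B_q(P^n(\alpha))$. The strategy is to use the self-adjoint operator $A_f$ with $H_f(X,Y) = \inpl{A_f X}{Y}$, reduce the eigenvalue equation $A_f X = \lambda X$ to an ordinary differential equation along $f$, and solve it separately in the horizontal, vertical, and tangential directions using the known parallel-transport behavior from Klingenberg's Proposition.

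\textbf{Proof strategy.} The plan is to rewrite the eigenvalue problem for the operator $A_f$ as an ordinary differential equation along the geodesic $f$, to diagonalize the curvature term by means of the horizontal/vertical splitting, and then to read off the eigenvalues and dimensions from an elementary constant-coefficient analysis combined with the monodromy information in the Proposition above. First I would set up the ODE. Since $H_f(X,Y)=\inpl{A_f X}{Y}$, an eigenvector $X$ with $A_f X=\lambda X$ satisfies $H_f(X,Y)=\lambda\inpl{X}{Y}$ for all $Y\in T_f(LP^n(\alpha))$. Integrating the derivative term of each side by parts (there are no boundary contributions because everything is $1$-periodic) turns this into
\[
\int_0^1 \inp{-\tfrac{D^2X}{dt^2}+R(X,f^\prime )f^\prime -\lambda\bigl(-\tfrac{D^2X}{dt^2}+X\bigr)}{Y}\,dt=0\qquad\text{for all }Y,
\]
and a standard elliptic regularity/bootstrap argument shows that $X$ is smooth and that the first factor of the integrand vanishes identically, i.e.
\[
(\lambda-1)\,\tfrac{D^2X}{dt^2}+R(X,f^\prime )f^\prime -\lambda X=0 .
\]
In particular $\lambda\neq1$, since with the sign convention for $R$ used in the Hessian formula the operator $Z\mapsto R(Z,f^\prime )f^\prime$ is negative semidefinite along $f$ (see below), so $R(X,f^\prime )f^\prime =X$ is impossible.

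Next I would bring in the geometry. Along $f$ the splitting $T_{f(t)}(P^n(\alpha))=T_{f(t)}(P^n(\alpha))_h\oplus T_{f(t)}(P^n(\alpha))_v$ is preserved by covariant differentiation, and, since $P^n(\alpha)$ is symmetric (so $\nabla R=0$) with the prescribed curvature normalization, the symmetric operator $Z\mapsto R(Z,f^\prime )f^\prime$ is parallel along $f$ and leaves invariant the three pairwise orthogonal subspaces $\RR f^\prime$, the horizontal vectors orthogonal to $f^\prime$, and the vertical vectors; on these it acts by $0$, by $-2\pi^2|f^\prime|^2$ and by $-\tfrac12\pi^2|f^\prime|^2$ respectively, the two nonzero values being minus the maximal and minimal sectional curvatures multiplied by $|f^\prime|^2$. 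Moreover $f$ is a $q$-fold cover of a primitive closed geodesic lying on a totally geodesic round $S^\alpha\cong P^1(\alpha)$ of constant curvature $2\pi^2$, whose primitive closed geodesics have length $\sqrt2$; hence $|f^\prime|=q\sqrt2$ and the two curvature eigenvalues become $-4\pi^2q^2$ and $-\pi^2q^2$. Consequently $H_f$, hence $A_f$, is block diagonal with respect to the orthogonal decomposition of $T_f(LP^n(\alpha))$ into fields pointwise proportional to $f^\prime$, horizontal fields orthogonal to $f^\prime$, and vertical fields, so each eigenvector may be sought inside a single block.

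Then I would solve the ODE block by block. Fixing a parallel orthonormal frame adapted to the splitting, a field in a given block is a vector of scalar functions $x_i(t)$, each satisfying $(\lambda-1)x_i^{\prime\prime}+(\kappa-\lambda)x_i=0$ with $\kappa\in\{0,-4\pi^2q^2,-\pi^2q^2\}$ the relevant curvature eigenvalue; equivalently $x_i^{\prime\prime}=-\omega^2 x_i$ with $\omega^2=\frac{\lambda-\kappa}{1-\lambda}$. A nonzero solution that assembles into a genuine loop-space tangent field along $f$ must be $1$-periodic on the tangential and horizontal blocks and $(-1)^q$-(anti)periodic on the vertical block, by the Proposition above; this forces $\omega\in\{2\pi p:p\in\NN_0\}$ in the first two cases and $\omega\in\{\pi r:r\in\NN_0,\ r\equiv q\ (\mathrm{mod}\ 2)\}$ in the vertical case, and no $\lambda$ outside the oscillatory range $[\kappa,1)$ produces a periodic solution. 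Solving $\omega^2=\frac{\lambda-\kappa}{1-\lambda}$ for $\lambda$ gives $\lambda=\frac{\omega^2+\kappa}{1+\omega^2}$, which is exactly $\nu_s$, $\lambda_p$ and $\mu_r$ on taking $\omega=2\pi s$, $\omega=2\pi p$ and $\omega=\pi r$. The dimension count is then bookkeeping: each frequency $\omega>0$ contributes a $2$-dimensional space of scalar solutions ($\cos$ and $\sin$) while $\omega=0$ contributes only the constants, and the tangential, horizontal-orthogonal and vertical blocks have $1$, $\alpha-1$ and $\alpha(n-1)$ frame directions respectively, which yields the stated real dimensions of $E_{t,s}$, $E_{h,p}$ and $E_{v,r}$.

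The routine parts are the integration by parts and the constant-coefficient ODE. The step requiring the most care is the geometric input: verifying that $Z\mapsto R(Z,f^\prime )f^\prime$ is a parallel operator with precisely the two nonzero eigenvalues $-4\pi^2q^2$ and $-\pi^2q^2$ (which relies on the rank-one symmetric space structure, the explicit curvature normalization, and the value $|f^\prime|=q\sqrt2$), and then threading the monodromy from the Proposition correctly through the boundary conditions — in particular extracting the congruence $r\equiv q\ (\mathrm{mod}\ 2)$ — so that the three families are shown to exhaust the eigenvectors of $A_f$ with no omissions and no repetitions.
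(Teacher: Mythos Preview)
Your approach is correct and essentially the same as the paper's: both derive the eigen-ODE via integration by parts, use the horizontal/vertical decomposition of the curvature term (the paper quotes Klingenberg's explicit block form of the Hessian, you recover the same eigenvalues $-4\pi^2q^2$ and $-\pi^2q^2$ of $Z\mapsto R(Z,f')f'$ from the rank-one curvature normalization and $|f'|^2=2q^2$), and then match trigonometric solutions against the periodicity/monodromy conditions coming from the Proposition. The only substantive difference is that the paper merely \emph{verifies} that the stated $X_p$, $Y_r$, $Z_s$ satisfy the eigen equations and defers completeness to a remark afterward, whereas you solve the ODE in each block and argue directly that these families exhaust all eigenvectors; this is a mild strengthening but not a different route.
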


\begin{proof}
The proposition above and the parity condition in (2) ensures that $X_p(0)=X_p(1)$ and $Y_r(0)=Y_r(1)$.

With our choice of metric, $|f^\prime (t)|^2 = 2q^2$. Moreover, the curvature 
tensor for $P^n(\alpha )$ is known, and its block matrix form allows 
Klingenberg to decompose the Hessian into a horizontal and a vertical quadratic 
form
\cite[1.4]{KlingProj}
\begin{align*}
H_f^h(X_h,Y_h) = & \int_0^1 \inp {\frac {DX_h} {dt} (t)} {\frac {DY_h} {dt} (t)} \\
& -2\pi^2 ( 2q^2 \inp {X_h(t)} {Y_h(t)} - 
\inp {f^\prime (t)} {X_h(t)} \inp {f^\prime (t)} {Y_h (t)}) dt, \\
H_f^v(X_v,Y_v) = & \int_0^1 \inp {\frac {DX_v} {dt} (t)} {\frac {DY_v} {dt} (t)}
- \pi^2 q^2 \inp {X_v(t)} {Y_v(t)} dt.
\end{align*}

Consider the eigen equation $H_f^h(X_h,Y_h)=\lambda \inpl {X_h} {Y_h}$ for all
$Y_h$ where $\lambda \in \RR$. If $X_h$ possess second covariant derivative, we get an 
equivalent equation via partial integration
\begin{equation} 
\label{h-eigen}
(1-\lambda)\frac {D^2X_h} {dt^2} + 
(4\pi^2q^2+\lambda)X_h -2\pi^2\inp {f^\prime} {X_h} f^\prime = 0.
\end{equation}
We insert $X_p$ in this equation. 
Since $\frac {D^2 X_p} {dt^2} = -4\pi^2 p^2 X_p$ we get the following:
\[((4\pi^2 p^2 + 1)\lambda - 4\pi^2 (p^2-q^2))X_p = 0.\]
Thus $\lambda_p$ is an eigenvalue for $H_f^h(\cdot , \cdot)$ with eigenvector $X_p$.

From $H_f^v(X_v,Y_v)=\mu \inpl {X_v} {Y_v}$ for all $Y_v$ where $\mu \in \RR$,
we get the eigen equation
\begin{equation}
\label{v-eigen}
(1-\mu )\frac {D^2X_v} {dt^2} + (\pi q^2+\mu )X_v = 0.
\end{equation}
We insert $Y_r$. Since $\frac {D^2 Y_r} {dt^2} = -\pi^2 r^2 Y_r$ we get
\[ ((\pi^2 r^2+1)\mu -\pi^2 (r^2 - q^2))Y_r = 0. \]
Thus $\mu_r$ is an eigenvalue for $H_f^v(\cdot , \cdot)$ with eigenvector $Y_r$.

Finally, we insert $Z_s$ into (\ref{h-eigen}). Since $f$ is a geodesics we have
that $\frac {Df} {dt} = 0$. Thus, $\frac {D^2 Z_s} {dt^2} = -4\pi^2 s^2 Z_s$ and
we obtain
\[ ((1+4\pi^2 s^2)\lambda - 4\pi^2 s^2)Z_s = 0.\]
We see that $\nu_s$ is an eigenvalue for $H_f^h(\cdot , \cdot)$ with 
eigenvector $Z_s$. 
\end{proof}

The subspaces described in 1.-3. have trivial pairwise intersection.
They also generate the full Hilbert space $T_f(P^n(\alpha ))$, so we have
the following result:

\begin{corollary}
The negative subspace is the direct sum
\[ T_f(LP^n(\alpha ))^- = \bigoplus_{0\leq p<q} E_{h,p} \oplus
\bigoplus_{0\leq r<q, \medspace r\equiv q \text{ mod } 2} E_{v,r}.\]
It has real dimension $(2q-1)(\alpha -1) + (q-1)\alpha (n-1)$.
The zero subspace is
\[ T_f(LP^n(\alpha ))^0 = E_{t,0} \oplus E_{h,q} \oplus E_{v,q}. \]
It has real dimension $2\alpha n -1$. The positive subspace is the Hilbert 
direct sum
\[ T_f(LP^n(\alpha ))^+ = \bigoplus_{p>q} E_{h,p} \oplus
\bigoplus_{r>q, \medspace r\equiv q \text{ mod } 2} E_{v,r} \oplus \bigoplus_{s>0} E_{t,s}.\]
\end{corollary}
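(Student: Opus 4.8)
The plan is to combine Lemma~\ref{KlingenbergLemma} with the spectral decomposition of the self-adjoint operator $A_f$ and with the spanning and disjointness statement recorded just above the corollary. Throughout, abbreviate $T_f(LP^n(\alpha))^{\pm}, T_f(LP^n(\alpha))^{0}$ to $T_f^{\pm}, T_f^{0}$.

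First I would determine the sign of each eigenvalue. Rewriting $\lambda_p = 1 - \tfrac{1+4\pi^2 q^2}{1+4\pi^2 p^2}$, we see $\lambda_p$ has the sign of $p^2-q^2$, hence is negative for $0\le p<q$, zero for $p=q$, and positive for $p>q$; likewise $\mu_r = 1 - \tfrac{1+\pi^2 q^2}{1+\pi^2 r^2}$ has the sign of $r^2-q^2$, and $\nu_s = 1-\tfrac{1}{1+4\pi^2 s^2}$ vanishes precisely for $s=0$ and is positive for $s>0$. (These rewritings also make visible the accumulation of the spectrum at $1$, consistent with $A_f$ being the identity plus a compact operator.) This sorts each space $E_{h,p}$, $E_{v,r}$ (for $r\equiv q\bmod 2$), $E_{t,s}$ into one of three classes: a negative class $\{E_{h,p}:0\le p<q\}\cup\{E_{v,r}: 0\le r<q,\ r\equiv q\bmod 2\}$, a zero class $\{E_{t,0},E_{h,q},E_{v,q}\}$, and a positive class consisting of all remaining pieces.

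Since $A_f$ is self-adjoint, eigenvectors for different eigenvalues are $\inpl{\cdot}{\cdot}$-orthogonal; together with the triviality of pairwise intersections recorded above (which, when two of the pieces share an eigenvalue, reflects the pointwise horizontal/vertical/tangential splitting of $T_{f(t)}(P^n(\alpha))$) this makes the algebraic sum of all the $E$-pieces direct, and by the spanning statement its closure is $T_f(LP^n(\alpha))$. A standard argument then shows that for each eigenvalue $\lambda$ the $\lambda$-eigenspace of $A_f$ is exactly the sum of the $E$-pieces carrying eigenvalue $\lambda$ (any $\lambda$-eigenvector orthogonal to all of them is orthogonal to a dense subspace, hence zero). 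Since $A_f$ has only finitely many negative eigenvalues and a finite-dimensional kernel (Section~\ref{sec:Morse}), the negative eigenspace is the \emph{finite} orthogonal direct sum of the negative-class pieces and the kernel is the finite orthogonal direct sum of the zero-class pieces; this yields the first two displayed formulas, and $T_f^{+}$ is the orthogonal complement of $T_f^{-}\oplus T_f^{0}$, i.e.\ the Hilbert direct sum of the positive-class pieces, which is the third.

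It remains to count real dimensions from the data in Lemma~\ref{KlingenbergLemma}. The negative horizontal part gives $\dim E_{h,0}+\sum_{p=1}^{q-1}\dim E_{h,p}=(\alpha-1)+(q-1)\cdot 2(\alpha-1)=(2q-1)(\alpha-1)$. For the negative vertical part I would split on the parity of $q$: the admissible indices in $[0,q)$ are $0,2,\dots,q-2$ if $q$ is even and $1,3,\dots,q-2$ if $q$ is odd, and in either case, remembering that $r=0$ contributes only $\alpha(n-1)$, the dimensions sum to $(q-1)\alpha(n-1)$. Adding, $\dim T_f^{-}=(2q-1)(\alpha-1)+(q-1)\alpha(n-1)$; and $\dim T_f^{0}=1+2(\alpha-1)+2\alpha(n-1)=2\alpha n-1$. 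The only step requiring genuine care is this vertical bookkeeping, where the parity constraint $r\equiv q\bmod 2$ must be carried through the count; the rest is a formal consequence of Lemma~\ref{KlingenbergLemma} and the spectral theorem.
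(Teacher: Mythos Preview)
Your proof is correct and follows the same approach as the paper, which in fact offers no argument beyond the lead-in sentence that the $E$-pieces have trivial pairwise intersection and span $T_f(LP^n(\alpha))$. Your version simply makes explicit the sign analysis of the eigenvalues, the spectral-theoretic justification for grouping the pieces, and the parity-dependent dimension count that the paper leaves to the reader.
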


Klingenberg shows that there are vector bundles over 
$B_q(P^n(\alpha ))$ for $q\geq 1$ as follows: 

\begin{center}
\begin{tabular}{|l|c|l|l|}
\hline
Vector bundle & $\text{dim}_{\RR}$ & Fiber over $f$ & Condition \\
\hline
$\eta_{h,0}$ & $\alpha -1$ & $E_{h,0}$ & \\
$\sigma_{h,p}$ & $2(\alpha -1)$ & $E_{h,p}$ & $p\geq 1$\\
$\sigma_{v,2p-1}$ & $2\alpha (n-1)$ & $E_{v,2p-1}$ & $q$ odd, $p\geq 1$\\
$\eta_{v,0}$ & $\alpha (n-1)$ & $E_{v,0}$ & $q$ even\\
$\sigma_{v,2p}$ & $2\alpha (n-1)$ & $E_{v,2p}$ & $q$ even\\
\hline
\end{tabular}
\end{center}

Thus, we have the following result \cite[1.6]{KlingProj}:
\begin{theorem}[Klingenberg]
\label{Klingenberg}
The non-trivial critical points for the energy integral 
$E:L(P^n(\alpha ))\to \RR$ decompose into the non-degenerate critical 
submanifolds $B_q(\alpha ) = B_q(P^n(\alpha ))$ consisting of the $q$-fold
covered parametrized great circles, $q=1,2,\dots$ ; $E(B_q(\alpha ))=2q^2$. 
The negative bundle $\mu_q^-$ over $B_q(\alpha )$ has the following form:
\begin{align*}
& \mu_q^- = \eta_{h,0} \oplus \bigoplus_{p=1}^{q-1} \sigma_{h,p} \oplus 
\bigoplus_{p=1}^{\frac {q-1} 2} \sigma_{v,2p-1} &\text{for $q$ odd,} \\
& \mu_q^- = \eta_{h,0} \oplus \bigoplus_{p=1}^{q-1} \sigma_{h,p} \oplus 
\eta_{v,0} \oplus \bigoplus_{p=1}^{\frac {q-2} 2} \sigma_{v,2p} 
&\text{for $q$ even.}
\end{align*}
\end{theorem}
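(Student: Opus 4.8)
The plan is to assemble Theorem \ref{Klingenberg} directly from the Corollary above together with the table of vector bundles, so essentially nothing new needs to be proven beyond bookkeeping. First I would recall from the Corollary that the negative subspace at a geodesic $f \in B_q(P^n(\alpha))$ is
\[
T_f(LP^n(\alpha))^- = \bigoplus_{0\leq p<q} E_{h,p} \oplus \bigoplus_{0\leq r<q,\; r\equiv q\bmod 2} E_{v,r},
\]
and that there is no contribution from the $E_{t,s}$ summands since $\nu_s > 0$ for $s\geq 1$ and $\nu_0 = 0$. The claim $E(B_q(\alpha)) = 2q^2$ follows from $|f'(t)|^2 = 2q^2$ (established in the proof of Lemma \ref{KlingenbergLemma}) and the definition of the energy integral, since $E(f) = \tfrac12\int_0^1 2q^2\,dt = q^2$ — I would double-check the normalization here, as the stated value is $2q^2$, which suggests the metric convention or the energy normalization differs by a factor of two from what a naive reading gives; this is the one place where I would want to be careful.

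Next I would globalize the fiberwise decomposition over $B_q(\alpha)$. The point is that the eigenvalue formulas $\lambda_p$ and $\mu_r$ in Lemma \ref{KlingenbergLemma} depend only on $p$, $r$, and $q$ — not on the particular geodesic $f$ — so the dimension of each eigenspace is locally constant, and the eigenspaces $E_{h,p}$, $E_{v,r}$ vary continuously with $f$, hence assemble into vector bundles over $B_q(\alpha)$. These are precisely the bundles $\eta_{h,0}$, $\sigma_{h,p}$, $\eta_{v,0}$, $\sigma_{v,2p-1}$, $\sigma_{v,2p}$ tabulated above, with the ranks recorded in the table (the doubling for $p,r > 0$ coming from the two-dimensional space of $(\cos,\sin)$-coefficients, as noted in Lemma \ref{KlingenbergLemma}). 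I would then simply match indices: for $q$ odd, the relevant $p$ with $0\leq p<q$ are $p=0$ (giving $\eta_{h,0}$) and $p = 1,\dots,q-1$ (giving $\bigoplus \sigma_{h,p}$), while the relevant $r$ with $0\leq r<q$, $r\equiv q\equiv 1 \bmod 2$ are $r = 1,3,\dots,q-2$, i.e. $r = 2p-1$ for $p = 1,\dots,\tfrac{q-1}{2}$ (giving $\bigoplus \sigma_{v,2p-1}$); note $r=0$ does not occur since $q$ is odd. For $q$ even, the relevant $p$ are again $p=0$ and $p=1,\dots,q-1$, while the relevant $r$ are $r = 0,2,4,\dots,q-2$, i.e. $r=0$ (giving $\eta_{v,0}$) together with $r = 2p$ for $p = 1,\dots,\tfrac{q-2}{2}$ (giving $\bigoplus \sigma_{v,2p}$).

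Finally I would verify the real dimension count as an internal consistency check: adding the ranks from the table in each case should reproduce the formula $(2q-1)(\alpha-1) + (q-1)\alpha(n-1)$ from the Corollary. For the horizontal part: $(\alpha-1) + (q-1)\cdot 2(\alpha-1) = (2q-1)(\alpha-1)$ in both parities. For the vertical part when $q$ is odd: $\tfrac{q-1}{2}\cdot 2\alpha(n-1) = (q-1)\alpha(n-1)$; when $q$ is even: $\alpha(n-1) + \tfrac{q-2}{2}\cdot 2\alpha(n-1) = (q-1)\alpha(n-1)$. Both agree. The only genuine subtlety — hence the main obstacle — is not the algebra but justifying that the eigenspaces glue into \emph{vector bundles} rather than merely a union of fibers: this requires the Bott non-degeneracy hypothesis, which guarantees (via the spectral theory of the family of operators $A_f$ recalled in Section \ref{sec:Morse}) that the negative, zero, and positive eigenspaces vary continuously and split $T(LP^n(\alpha))|_{B_q}$ equivariantly. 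Since this is exactly the content invoked from \cite{KlingProj} and Section \ref{sec:Morse}, the theorem then follows by citing \cite[1.6]{KlingProj} for the bundle structure and combining it with the index matching above.
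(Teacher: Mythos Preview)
Your proposal is correct and follows exactly the approach implicit in the paper: the paper gives no separate proof of Theorem~\ref{Klingenberg} but simply states it as a consequence of the Corollary and the table of bundles, citing \cite[1.6]{KlingProj}. Your bookkeeping---matching the index ranges $0\le p<q$ and $0\le r<q$, $r\equiv q\bmod 2$ to the bundle summands, and verifying the rank count---is precisely the content that the word ``Thus'' before the theorem statement is meant to convey.

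One remark: your suspicion about the energy value is well founded. With the paper's own definition $E(f)=\tfrac12\int_0^1|f'(t)|^2\,dt$ from Section~\ref{sec:Morse} and the stated $|f'(t)|^2=2q^2$, one gets $E(f)=q^2$, not $2q^2$; the value $2q^2$ is inherited verbatim from Klingenberg's paper, which uses the convention $E(f)=\int_0^1|f'(t)|^2\,dt$ without the factor $\tfrac12$. This is a harmless normalization clash between Sections~\ref{sec:Morse} and~\ref{sec:Klingenberg} and does not affect the bundle decomposition.
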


\section{Spaces of geodesics viewed as projective Stiefel manifolds}
\label{sec:geod}

From now on, we consider the complex projective space $\CP^n$. It has a
Hermitian metric, which we now describe. References are \cite{KN2} page 273 
or \cite{MT} page 142.

Equip $\CC^{n+1}$ with the standard Hermitian inner product
$h(v,w)=\sum_{k=1}^{n+1}v_k\overline w_k$. The real part 
$g^\prime (v,w) = \Re h(v,w)$
is the usual inner product on $\RR^{2n+2}\cong \CC^{n+1}$. Furthermore, 
$h(v,w)=g^\prime (v,w)+ig^\prime (v,iw)$.

Let $S^{2n+1}=\{ x\in \CC^{n+1} | h(x,x)=1 \}$ be the unit sphere and write
$\TT$ for the unit circle group. Consider the Hopf projection
\[ \hp :S^{2n+1} \to S^{2n+1}/\TT =\CP^n . \]
By restriction of $h$ we have a Hermitian inner product on the orthogonal complement 
$( \CC x )^\perp = \{ v\in \CC^{n+1} | h(x,v)=0 \}$ and
$( \CC x )^\perp$ is a real subspace of the tangent space 
$T_x(S^{2n+1})$. One can equip $\CP^n$ with a Hermitian metric 
$\tilde h (\cdot , \cdot )$ such that
\[ \fs_x : (\CC x)^\perp \subseteq T_x(S^{2n+1}) \xrightarrow{\hp_*} 
T_{\hp (x)}(\CP^n ) \]
becomes a $\CC$-linear isometry. The following identity holds
\begin{equation} 
\label{eqn:circleact}
\fs_{zx}(zv)=\fs_x(v) \text{ for } z\in \TT .
\end{equation}
The real part $\tilde g (\cdot , \cdot ) = \Re \tilde h (\cdot , \cdot )$ 
is the Fubini-Study metric on $\CP^n$. 
(In \cite{KN2} they allow a rescaling of $\tilde g$ by $4/c$ for a 
positive constant $c$. We let $c=4$.) It is known that the sectional 
curvature for this metric has maximal value $4$ and minimal value $1$ when $n>1$. 
Thus the metric on $\CP^n$ used in section \ref{sec:Klingenberg} is 
$\frac {\pi^2} 2 \tilde g$.

For $\CP^n$ with Riemannian metric $\tilde g$ and associated Levi-Civita connection, 
we now describe the spaces of closed geodesics $B_q(\CP^n )$ in terms of projective Stiefel manifolds.
Recall that $B_q(\CP^n )$ is the space of constant geodesics
for $q=0$, primitive geodesics for $q=1$ and $q$-fold iterated primitive
geodesics for $q\geq 2$.

\begin{definition}
Let $\St 2 {n+1}$ denote the Stiefel manifold of complex orthonormal 2-frames in $\CC^{n+1}$. 
\end{definition}

Write $U$ for the unitary matrix
\[ U = \frac 1 {\sqrt 2} \begin{bmatrix} 1 & -i \\ 1 & i \end{bmatrix}  \]
and let $D_\theta$ and $R_\theta$ be the following diagonal and rotation matrices:
\[ 
D_\theta = \begin{bmatrix} e^{-i\theta} & 0 \\ 0 & e^{i\theta} \end{bmatrix}, \quad 
R_\theta = \begin{bmatrix} \cos \theta & -\sin \theta \\ \sin \theta & \cos \theta \end{bmatrix} . 
\] 

\begin{lemma}
\label{lemma:diagonalization}
Matrix multiplication defines a right action
\begin{align*}
& \St 2 {n+1} \times U(2) \to \St 2 {n+1};  \\
& \big( (u,v), \begin{bmatrix} a & b \\ c & d\end{bmatrix} \big) \mapsto (au+cv, bu+dv).
\end{align*}
The diffeomorphism $\tau : \St 2 {n+1} \to \St 2 {n+1}$; $(u,v)\mapsto (u,v)U$ satisfies
\[ \tau ((u,v)D_\theta ) = \tau (u,v) R_\theta . \]
\end{lemma}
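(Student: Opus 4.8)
The plan is to verify the three assertions in turn — that matrix multiplication gives a well-defined right action, that $\tau$ is a diffeomorphism, and the conjugation identity — reducing everything to elementary $2\times 2$ matrix algebra.

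First I would check that the proposed action lands in $\St 2 {n+1}$. Regard a frame $(u,v)$ as the $(n+1)\times 2$ complex matrix $A$ with columns $u$ and $v$; orthonormality of the frame is precisely the identity $A^*A = I_2$. For $M\in U(2)$ we then have $(AM)^*(AM) = M^*(A^*A)M = M^*M = I_2$, so $(u,v)M$ is again an orthonormal $2$-frame, and unwinding the product shows its columns are $au+cv$ and $bu+dv$ as claimed. Associativity and the unit law are inherited from matrix multiplication, so this is a genuine right action, and it is smooth since the entries of $AM$ are polynomial in the entries of $A$ and $M$.

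Next, a short computation gives $U^*U = \tfrac12\begin{bmatrix}1 & 1\\ i & -i\end{bmatrix}\begin{bmatrix}1 & -i\\ 1 & i\end{bmatrix} = I_2$, so $U\in U(2)$. Hence $\tau$, being right multiplication by $U$, maps $\St 2 {n+1}$ to itself; right multiplication by $U^{-1}=U^*$ is a smooth two-sided inverse, so $\tau$ is a diffeomorphism.

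Finally, by associativity of the action, $\tau\big((u,v)D_\theta\big) = (u,v)(D_\theta U)$ and $\tau(u,v)R_\theta = (u,v)(U R_\theta)$, so the desired identity follows once we show $D_\theta U = U R_\theta$, i.e. $U^{-1}D_\theta U = R_\theta$. This is the classical diagonalization of a plane rotation: conjugating $R_\theta$ by $U$ puts it in the diagonal form $D_\theta$ with entries $e^{\mp i\theta}$. Concretely one multiplies out both products of $2\times 2$ matrices and uses $\cos\theta \mp i\sin\theta = e^{\mp i\theta}$ to see that the two sides agree entrywise. There is no genuine obstacle here; the only point requiring care is the bookkeeping of conventions — columns versus rows of a frame, and the right (rather than left) action — so that the conjugation comes out as $U^{-1}D_\theta U$ and not $UD_\theta U^{-1}$.
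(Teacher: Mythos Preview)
Your proposal is correct and follows essentially the same route as the paper: verify that the action lands in $\St 2 {n+1}$ by checking orthonormality (you do this via $A^*A=I_2$, the paper via explicit inner products), then reduce the intertwining identity to the $2\times 2$ matrix equation $D_\theta U = U R_\theta$. The only cosmetic difference is that the paper verifies this by exhibiting the eigenvectors of $R_\theta$ (the columns of $U^{-1}$) while you propose multiplying out both sides directly; either way it is the classical diagonalization of a rotation.
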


\begin{proof}
Regarding the action, it suffices to verify that the image frame is orthonormal. By the elementary properties of the 
inner product, one finds that
\[h(au+cv,au+cv)=1, \quad h(au+cv, bu+dv)=0, \quad h(bu+dv, bu+dv)=1 .\]
so this is the case. Let
\[ V=U^{-1}=\frac 1 {\sqrt 2} \begin{bmatrix} 1 & 1\\ i & -i \end{bmatrix} .\]
One has
\[ 
\begin{bmatrix} \alpha & -\beta \\ \beta & \alpha \end{bmatrix}  \begin{bmatrix}1 \\ i \end{bmatrix} =
(\alpha -i\beta ) \begin{bmatrix} 1 \\i \end{bmatrix} , \quad
\begin{bmatrix} \alpha & -\beta \\ \beta & \alpha \end{bmatrix}  \begin{bmatrix}1 \\ -i \end{bmatrix} =
(\alpha +i\beta ) \begin{bmatrix} 1 \\ -i \end{bmatrix} .
\]
For $\alpha = \cos \theta$ and $\beta = \sin \theta$ this gives us the diagonalization $V^{-1} R_\theta V = D_\theta$.
Thus, $UR_\theta = D_\theta U$ such that $\tau$ has the stated property.
\end{proof}

We now define a right action of the torus group $\TT^2$ on the Stiefel manifold. We use different notations for the
left and right circle group factors as follows: $\TT^2 = \TT \times U(1)$. We view $\TT$ and $U(1)$ as subgroups of the abelian group $\TT^2$ via inclusion in the first and second factor respectively. 
For each integer $q$ there is a group homomorphism
\[ \iota_q : \TT^2 \to U(2); \quad (z_1, z_2) \mapsto 
\begin{bmatrix} z_1^q z_2 & 0 \\ 0 & z_2 \end{bmatrix}. \]
Recall that a right $G$-space $X$ is considered a left $G$-space by the action $g*x = x*g^{-1}$ for 
$g\in G$, $x\in X$ and vice versa.

\begin{definition} 
The torus $\TT^2$ acts from the right on $\St 2 {n+1}$ via the homomorphism $\iota_q$ 
and the $U(2)$-action of Lemma \ref{lemma:diagonalization}. Let $\Sta 2 {n+1} q$ denote the corresponding left
$\TT^2$-space.  
The projective Stiefel manifold is defined as the quotient space
\[ \PSt 2 {n+1} q = \Sta 2 {n+1} q / U(1). \] 
It is equipped with a left action 
of the quotient group $\TT \cong \TT^2/U(1)$.
When viewed as a space without a group action, the projective Stiefel manifold is denoted $\PStnull 2 {n+1}$. 
\end{definition}

\begin{remark}
Alternatively, we have
\[ \PStnull 2 {n+1} = \St 2 {n+1} / diag_2 (U(1)) \]
where $diag_2 (U(1)) \subseteq U(2)$ denotes the diagonal inclusion. The  $\TT$-action is given by 
\[  z*[u,v] = [z^{-q}u, v] = [u, z^qv]=[c^{-q}u, c^{q}v] \]
where $c$ is a square root of $z$.
Note that $[u,zv]=[u,v] \Rightarrow z=1$ so the $\TT$-action is free when $q=1$.
\end{remark}

The projective Stiefel manifold is diffeomorphic to the sphere bundle of the tangent bundle 
of $\CP^n$ as follows: 
\[
\xymatrix@C=1 cm{
\Phi : \PStnull 2 {n+1} \ar[r]^-{\cong } & S(T(\CP^n )); & [u,v] \ar@{|->}[r] & (\fs_u (v))_{\hp (u)} .
}  
\]
So via the exponential map it corresponds to a space of geodesics. The $\TT$-action on $\PSt 2 {n+1} 1$ 
corresponds to complex rotation in the tangent bundle since $\Phi ([u,zv]) =  (z\fs_u (v))_{\hp (u)}$. 
The purpose of the diffeomorphism $\tau$ of Lemma \ref{lemma:diagonalization} 
is to make this $\TT$-action, which has a simple description, correspond to rotation of closed geodesics. 
More precisely we have:

\begin{theorem}
\label{thm:diffeo}
For every positive integer $q$ there is a $\TT$-equivariant diffeomorphism
\[ \phi_q : \PSt 2 {n+1} q \to \B q n ; \quad
\phi_q ([u,v])(t) = 
\hp ( \frac {e^{-q\pi i t}u + e^{q\pi i t}v} {\sqrt 2} ). \]
\end{theorem}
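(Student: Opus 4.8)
The plan is to verify directly that $\phi_q$ is a well-defined diffeomorphism onto $\B q n$ and that it intertwines the two circle actions. First I would check well-definedness on the quotient $\PSt 2 {n+1} q = \Sta 2 {n+1} q / U(1)$: if $(u,v)$ is replaced by $(z_2 u, z_2 v)$ with $z_2 \in U(1)$ (which is how $diag_2(U(1))$ acts after applying $\iota_q$ at a point of $\TT^2$ with trivial first coordinate), then the argument of $\hp$ is multiplied by $z_2$, so $\hp$ of it is unchanged; hence $\phi_q([u,v])$ depends only on the class $[u,v]$. Next I would confirm that $\phi_q([u,v])$ actually lands in $\B q n$: the curve $t \mapsto (e^{-q\pi i t}u + e^{q\pi i t}v)/\sqrt 2$ is a great circle in $S^{2n+1}$ (since $u,v$ are $h$-orthonormal, this is a unit-speed geodesic on the round sphere traversed $q$ times as $t$ runs over $[0,1]$), and the Hopf projection of a great circle is a closed geodesic in $\CP^n$ for the metric $\tilde g$; the factor $q$ in the exponent makes it exactly $q$-fold covered. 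I should also check $t=0$ and $t=1$ give the same point, which holds because $e^{\mp q\pi i}=(-1)^q$ multiplies the whole argument by a scalar in $\TT$, killed by $\hp$.

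Then I would establish the $\TT$-equivariance, which is the conceptual heart of the statement and the reason $\tau$ was introduced. On the target, $z \in \TT$ acts by rotating a closed geodesic: $(z * f)(t) = f(t + \arg(z)/(2\pi q) \cdot \text{something})$ — more precisely the rotation of loops by the appropriate phase so that the $q$-fold cover is shifted consistently. On the source, using the remark, $z * [u,v] = [c^{-q}u, c^{q}v]$ where $c^2 = z$. Substituting into the formula, $\phi_q(z*[u,v])(t) = \hp((e^{-q\pi i t}c^{-q}u + e^{q\pi i t}c^{q}v)/\sqrt 2)$. Writing $c = e^{i\psi}$ so $z = e^{2i\psi}$, this becomes $\hp((e^{-q\pi i(t + \psi/\pi)}u + e^{q\pi i(t+\psi/\pi)}v)/\sqrt 2) = \phi_q([u,v])(t + \psi/\pi)$, which is exactly $\phi_q([u,v])$ rotated by the phase corresponding to $z$. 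So I would match this rotation-of-loops action with the intrinsic $\TT$-action on $\B q n$ coming from the $S^1$-action on $L\CP^n$, using that the latter is reparametrization $f \mapsto f(\cdot + s)$.

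Finally I would argue bijectivity and smoothness. Smoothness of $\phi_q$ and of a candidate inverse is clear from the explicit formulas (everything is built from $\hp$, exponentials, and the linear-algebra data of the frame). For bijectivity I would use the diffeomorphism $\Phi : \PStnull 2 {n+1} \xrightarrow{\cong} S(T\CP^n)$ together with the exponential map: a closed geodesic in $B_q$ is determined by its initial point and initial unit velocity, i.e.\ by a point of $S(T\CP^n)$, and I would trace through how $\phi_q$ factors (up to the reparametrization built into $\tau$) through $\Phi$ composed with ``take the geodesic with this initial data, $q$-fold covered.'' Concretely, differentiating $\phi_q([u,v])(t)$ at $t=0$ and using $\fs_{zx}(zv) = \fs_x(v)$ identifies the initial velocity with $\pm q\pi i \cdot \fs_u(v)$ up to the relevant scalar, so injectivity of $\phi_q$ reduces to injectivity of $\Phi$ and surjectivity to surjectivity of $\Phi$ together with the fact that every element of $B_q(\CP^n)$ is a $q$-fold iterate of a primitive geodesic. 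I expect the main obstacle to be bookkeeping the precise normalization of the $\TT$-action and the parametrization interval $[0,1]/\{0,1\}$ so that the phase $\psi$ of $c$ matches the loop-rotation parameter exactly, and making sure the ``great circle $\to$ closed geodesic'' step is justified for the specific metric $\tilde g$ rather than some rescaling — this is where the computation of $|f'(t)|^2$ and the relation between $\tilde g$ and the section \ref{sec:Klingenberg} metric must be invoked consistently.
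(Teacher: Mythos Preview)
Your approach is correct but proceeds by direct verification, whereas the paper factors $\phi_q$ as a composite $\psi_q \circ \tau_q$. The paper first invokes as a known fact (with references) the diffeomorphism
\[
\psi_q([a,b])(t) = \hp\big(\cos(q\pi t)\,a + \sin(q\pi t)\,b\big),
\]
which is $\TT$-equivariant for the \emph{rotation} action $e^{2\pi is}\star[a,b]=[(a,b)R_{q\pi s}]$ on the projective Stiefel manifold. It then uses Lemma~\ref{lemma:diagonalization}, namely the matrix identity $UR_\theta = D_\theta U$, to see that $\tau$ descends to a $\TT$-equivariant diffeomorphism $\tau_q$ from $\PSt 2 {n+1} q$ (with the diagonal action) to the same space with the rotation action. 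A one-line Euler-formula computation then gives $\psi_q\circ\tau_q = \phi_q$, and the theorem follows. In particular the paper never checks well-definedness, the geodesic property, or bijectivity of $\phi_q$ directly: all of that is inherited from $\psi_q$ and from $\tau_q$ being an automorphism.

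Your route works, but two points deserve care. First, ``the Hopf projection of a great circle is a closed geodesic'' is not true for arbitrary great circles; you need the horizontality condition $h(c(t),c'(t))=0$, which does hold here because $u,v$ are $h$-orthonormal, but should be stated. Second, your bijectivity argument via $\Phi:[u,v]\mapsto (d\rho_u(v))_{\rho(u)}$ does not match $\phi_q$ on the nose: the initial point of $\phi_q([u,v])$ is $\rho((u+v)/\sqrt 2)$, not $\rho(u)$, and its initial velocity lies over that point. Tracing the relation forces exactly the change of frame $(u,v)\mapsto(u,v)U$, i.e.\ you are implicitly reconstructing $\tau$. So your sketch is sound, but the factorization through $\psi_q$ and $\tau_q$ is what makes the bookkeeping you flag at the end disappear: the phase-matching and the metric normalization are absorbed into the cited result for $\psi_q$, and the change from rotation to diagonal action is isolated in the single algebraic identity for $U$.
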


\begin{proof}
It is well known (\cite{GHL} 2.110 or \cite{KN2} page 277) that there is a diffeomorphism
\begin{align*}
& \psi_q : \PStnull 2 {n+1} \to \B q n ; \\
& \psi_q ([a,b])(t) = \hp \big( \cos (q\pi t)a+\sin (q\pi t)b \big) = 
\hp \big( (a,b)R_{q\pi t} \begin{bmatrix} 1 \\ 0 \end{bmatrix} \big) , 
\end{align*}
where $0\leq t \leq 1$. The diffeomorphism becomes equivariant when we let $\TT$ act on
$\B q n$ and $\PStnull 2 {n+1}$ by
$(e^{2\pi is}*f)(t)= f(s+t)$ and $e^{2\pi is}\star [a,b] = [(a,b)R_{q\pi s}]$ respectively. 
Write $\PSt 2 {n+1} {(q)}$ for the projective Stiefel manifold equipped with this action.
 
The group $diag_2(U(1))$ is in the center of $U(2)$ so the map $\tau$ from Lemma \ref{lemma:diagonalization} 
gives us a well-defined automorphism of the projective Stiefel manifold. This automorphism is a $\TT$-equivariant map
\[ \tau_q : \PSt 2 {n+1} q \to \PSt 2 {n+1} {(q)} \]
by the equation for $\tau$ proven in Lemma \ref{lemma:diagonalization}. 
Via Euler's formulas we find
\[ (\psi_q \circ \tau_q)([u,v])(t) = 
\rho (\frac {e^{-iq\pi t}u+e^{iq\pi t}v} {\sqrt 2}). \]
Thus, $\psi_q \circ \tau_q = \phi_q$ and we have the desired result.
\end{proof}

\section{A description of the negative bundle}
\label{sec:Negative_bundle}

In this section we will describe the negative bundles as bundles over 
projective Stiefel manifolds. We start by the following result regarding
the constant (parallel) horizontal and vertical vector fields mentioned
in Lemma \ref{KlingenbergLemma}.

\begin{lemma}
\label{lemma:vector fields}
Let $(u,v)\in \St 2 {n+1}$ and let $q$ be a positive integer. 
Define the curve 
\[ c:[0,1]\to S^{2n+1}; 
\quad c(t) = \frac {e^{-q\pi i t}u+e^{q\pi i t}v} {\sqrt 2} \] 
and put $f(t)=\hp (c(t))=\phi_q([u,v])(t)$. Then the horizontal and vertical subspaces
at $f(t)$ are given by
\[ T_{f(t)}(\CP^n )_h = \fs_{c(t)}(\linspan_\CC (c^\prime (t))), \quad
T_{f(t)}(\CP^n )_v = \fs_{c(t)}(\{ u, v \}^\perp ), \]
where $\perp$ is with respect to the Hermitian inner product $h$.
Furthermore,
\[ H(t) = \fs_{c(t)}(e^{-q\pi i t}u-e^{q\pi i t}v) \]
is a parallel and horizontal vector field along $f$, such that
$\tilde g (H(t),f^\prime (t))=0$ for all $t$, and
\[ V(w)(t) = \fs_{c(t)}(w) \]
is a parallel and vertical vector field along $f$ for 
all $w\in \{ u, v \}^\perp$. These vector fields satisfy 
\[ H(0) = H(1), \quad V(w)(0) = (-1)^q V(w)(1).\]
\end{lemma}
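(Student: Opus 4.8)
The plan is to work entirely upstairs on the sphere $S^{2n+1}$, using the $\CC$-linear isometry $\fs_{c(t)}$ to transfer the problem into a computation about the Hermitian inner product on $\CC^{n+1}$. First I would verify that the curve $c$ actually lands in $S^{2n+1}$: since $(u,v)$ is an orthonormal $2$-frame, $h(c(t),c(t)) = \tfrac12(h(u,u)+h(v,v)) + \tfrac12( e^{2q\pi i t}\overline{h(v,u)} + e^{-2q\pi i t} h(u,v)) = 1$, so $c$ is a curve on the sphere and $f = \hp\circ c$ is its projection, agreeing with $\phi_q([u,v])$ by Theorem \ref{thm:diffeo}. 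Next I would identify the horizontal and vertical subspaces: $f$ lies on the projective line $\hp(\linspan_\CC\{u,v\})$, and the horizontal tangent space at $f(t)$ is the image under $\fs_{c(t)}$ of $(\CC c(t))^\perp \cap \linspan_\CC\{u,v\}$, which is the complex line spanned by the vector $e^{-q\pi i t}u - e^{q\pi i t}v$ (one checks this is $h$-orthogonal to $c(t)$ and lies in $\linspan_\CC\{u,v\}$). Since $c'(t) = \tfrac{q\pi i}{\sqrt2}(-e^{-q\pi i t}u + e^{q\pi i t}v)$ is a nonzero complex multiple of that vector, the horizontal space equals $\fs_{c(t)}(\linspan_\CC(c'(t)))$. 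The vertical space is the orthogonal complement within the projective line's tangent space inside $T_{f(t)}\CP^n$, which pulls back to $\fs_{c(t)}(\{u,v\}^\perp)$.

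For the parallelism statements, I would use the standard fact (via \eqref{eqn:circleact} and the description of the Levi-Civita connection on $\CP^n$ coming from the Hopf fibration) that a vector field of the form $t\mapsto \fs_{c(t)}(w(t))$ along $f$ is parallel precisely when $w'(t)$ is pointwise a complex multiple of $c'(t)$, i.e. when the $S^{2n+1}$-lift $w(t)$ has covariant derivative along $c$ that is "purely in the Hopf direction" — equivalently, when $\tfrac{dw}{dt}(t)$ lies in $\CC\, c(t) \oplus \CC\, c'(t)$ is not quite it; the clean statement is that $w(t)$ parallel-transports a horizontal-for-$\hp$ vector iff $\langle w'(t), c(t)\rangle$ and the component of $w'(t)$ orthogonal to $\linspan_\CC\{c(t)\}$... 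I would instead just cite Lemma \ref{lemma:diagonalization}'s circle identity and the explicit connection formula from \cite{KlingProj} or \cite{KN2}: for $H(t) = \fs_{c(t)}(h(t))$ with $h(t) = e^{-q\pi i t}u - e^{q\pi i t}v$, compute $h'(t) = -q\pi i(e^{-q\pi i t}u + e^{q\pi i t}v)\cdot\tfrac{1}{\sqrt2}\cdot\sqrt2 = -q\pi i\sqrt2\, c(t)$, so $h'(t)$ is a complex multiple of $c(t)$ alone, hence $H$ is parallel along $f$ and it is horizontal since $h(t)\perp c(t)$ lies in the projective line. For $V(w)(t) = \fs_{c(t)}(w)$ with $w\in\{u,v\}^\perp$ constant, $\tfrac{d}{dt}w = 0$, so trivially $V(w)$ is parallel, and it is vertical by the identification of the vertical subspace above.

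For the orthogonality $\tilde g(H(t),f'(t)) = 0$: since $\fs_{c(t)}$ is a $\CC$-linear isometry, $\tilde h(H(t),f'(t)) = h(h(t), c'(t))$, and a direct computation gives $h(e^{-q\pi i t}u - e^{q\pi i t}v,\ \tfrac{q\pi i}{\sqrt2}(-e^{-q\pi i t}u + e^{q\pi i t}v)) = \tfrac{q\pi i}{\sqrt2}(-h(u,u) - h(v,v)) = -q\pi i\sqrt2 \in i\RR$ (using orthonormality to kill the cross terms), so its real part $\tilde g(H(t),f'(t))$ vanishes. Finally, the boundary conditions: $H(0) = \fs_{c(0)}(u-v)$ and $H(1) = \fs_{c(1)}(e^{-q\pi i}u - e^{q\pi i}v) = \fs_{c(1)}((-1)^q u - (-1)^q v) = (-1)^q\fs_{c(1)}(u-v)$; but $c(1) = \tfrac{1}{\sqrt2}((-1)^q u + (-1)^q v) = (-1)^q c(0)$, so by \eqref{eqn:circleact} applied with $z = (-1)^q\in\TT$ we get $\fs_{c(1)}((-1)^q(u-v)) = \fs_{(-1)^q c(0)}((-1)^q(u-v)) = \fs_{c(0)}(u-v) = H(0)$, hence $H(0) = H(1)$. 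Similarly $V(w)(1) = \fs_{c(1)}(w) = \fs_{(-1)^q c(0)}(w)$, and since \eqref{eqn:circleact} reads $\fs_{(-1)^q c(0)}((-1)^q w) = \fs_{c(0)}(w)$, we obtain $\fs_{c(1)}(w) = \fs_{c(0)}((-1)^{-q} w) = (-1)^q\fs_{c(0)}(w) = (-1)^q V(w)(0)$, i.e. $V(w)(0) = (-1)^q V(w)(1)$.

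The main obstacle I expect is pinning down the precise characterization of parallel vector fields along $f$ in terms of their $S^{2n+1}$-lifts — i.e. making rigorous that "$\fs_{c(t)}(w(t))$ is parallel iff $w'(t) \in \CC\, c(t)$" — since this requires unwinding the relation between the Levi-Civita connection on $(\CP^n, \tilde g)$ and the covariant derivative on $S^{2n+1}$ through the Riemannian submersion $\hp$; everything else is bookkeeping with the Hermitian form and Euler's formula. I would handle this by invoking the O'Neill submersion formulas (or directly the connection description in \cite{KN2}, page 273) rather than rederiving it.
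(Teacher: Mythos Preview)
Your proposal is correct and follows essentially the same route as the paper: work upstairs on $S^{2n+1}$, use the orthogonal decomposition $(\CC c(t))^\perp = \linspan_\CC(c'(t)) \oplus \{u,v\}^\perp$ to identify the horizontal and vertical subspaces, compute $h(\tilde H, c')$ to be purely imaginary for the orthogonality, and invoke \eqref{eqn:circleact} for the boundary relations.

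The one place you differ is the parallelism argument for $H$. The paper observes that $\tilde H$ is a constant complex multiple of $c'$ and, since $c$ is a geodesic on $S^{2n+1}$, asserts $\tfrac{D}{dt}\tilde H = 0$ on the sphere, then descends. Your computation $h'(t) = -q\pi i\sqrt{2}\,c(t)$ shows that in fact $\tfrac{D}{dt}\tilde H$ on the sphere is not zero but lies in $\RR\,(ic(t))$, the vertical (Hopf) direction; your proposed criterion ``$\fs_{c(t)}(w(t))$ is parallel along $f$ iff $w'(t)\in\CC\,c(t)$'' is exactly what the O'Neill submersion formula gives here (since $c$ is a horizontal lift of $f$), and it cleanly handles both $H$ and $V(w)$. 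So your version of this step is actually the more careful one; citing the submersion formula from \cite{KN2} as you suggest is the right fix. One minor slip: when you compute $h(h(t),c'(t))$ you pull the scalar $\tfrac{q\pi i}{\sqrt 2}$ out of the second slot linearly rather than conjugate-linearly, but the result is purely imaginary either way, so the conclusion $\tilde g(H,f')=0$ is unaffected.
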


\begin{proof}
We have that $c^\prime (t)= -q\pi i(e^{-q\pi i t}u-e^{q\pi i t}v)/\sqrt 2$. Since $u$ and $v$ are 
orthonormal vectors it follows that $h(c^\prime (t), c^\prime (t))=q^2\pi^2$ and $h(c(t), c^\prime (t))=0$. 
Furthermore, $\{ c(t), c^\prime (t) \}^\perp = \{ u, v\}^\perp$ for all $t$. 
Thus we have an orthogonal decomposition
\[ \{ c(t) \}^\perp = \linspan_\CC (c^\prime (t))\oplus 
\{ c(t), c^\prime (t) \}^\perp
= \linspan_\CC (c^\prime (t))\oplus 
\{ u, v \}^\perp . \]
By the chain rule 
$f^\prime (t) = T_{c(t)}(\hp )(c^\prime (t)) = \fs_{c(t)}(c^\prime (t))$
such that
\[ T_{f(t)}(\CP^n )_h =\linspan_\CC (f^\prime (t)) = \eta_{c(t)} (\linspan_\CC ( c^\prime (t))) \]
and since $\fs_{c(t)}$ is an isometry, we also obtain the desired descriptions of 
the vertical subspace.

Put $\tilde H(t) = e^{-q\pi i t}u-e^{q\pi i t}v$.
Since $\tilde H$ is a complex rescaling of $c^\prime$ we see that $H$ is 
a horizontal vector field. 

We have equipped $S^{2n+1}\subseteq \CC^{n+1} \cong \RR^{2n+2}$
with the Riemannian metric induced from $\RR^{2n+2}$. Since $c$ is a geodesics in that metric
we have $\frac {D} {dt} \tilde H(t)=0$.
The projective space $\CP^n$ is equipped with the Fubini-Study metric 
so it follows that $\frac {D} {dt} H(t)=0$.
Thus $H$ is a parallel vector field along $f$. 

We have 
$h(\tilde H(t), c^\prime (t))= 
-q\pi i \vert \vert \tilde H(t) \vert \vert^2 /\sqrt 2$. The real part of this equation
gives us that $g^\prime (\tilde H(t), c^\prime (t))=0$.
It follows that $\tilde g( H(t), f^\prime (t))=0$ since 
$\fs_{c(t)}$ is an isometry.

By the first part of the lemma, $V(w)$ is a vertical vector field
for all $w\in \{ u, v \}^\perp$. Since $w$ is constant, $\frac {dw} {dt}=0$. So its 
orthogonal projection $\frac {Dw} {dt}$ onto the tangent space at $c(t)$ is also zero. It follows that
$\frac {DV(w)} {dt}=0$ such that $V(w)$ is a parallel vector field along $f$. The final relations follows by 
equation (\ref{eqn:circleact}).
\end{proof}

We will now give a slightly different description of the curve and vector fields of the lemma such that the 
proof of Theorem \ref{neg-isos} becomes easier.

\begin{definition}
\label{definition:c(u,v)}
For $(u,v)\in \St 2 {n+1}$ we define the {\em closed} geodesic
\[ c(u,v): \TT \to S^{2n+1}; \quad 
c(u,v)(z) = \frac 1 {\sqrt 2} (z^{-1}u+zv). \]
\end{definition}

The equivariant diffeomorphism 
$\phi_q: \PSt 2 {n+1} q \to \B q n$ from Theorem \ref{thm:diffeo} is 
defined by the diagram
\[
\xymatrix@C=1.5 cm{
\TT \ar[r]^{c(u,v)} & S^{2n+1} \ar[d]^{\hp} \\
\TT \ar[u]^{(\sqrt \cdot )^q } \ar[r]^{\phi_q ([u,v]) } & \CP^n. } 
\]
Note that $h(c(u,v),c(u,-v))=0$. So we can view $c(u,-v)$ as a vector field along $c(u,v)$.

\begin{definition}
Define a parallel horizontal tangent vector field along $\phi_2 ([u,v])$ by
\[ H(u,v)(z) = \fs_{c(u,v)(z)}(c(u,-v)(z)) \]
and for $w \in \{ u,v \}^\perp$, where $\perp$ is with respect to $h$, a 
parallel vertical tangent vectors field by 
\[ V(u,v,w)(z) = \fs_{c(u,v)(z)}(w). \]
\end{definition}

The relations to the curve and vector fields of Lemma \ref{lemma:vector fields} are as follows:
\[ c(u,v)(e^{q\pi i t}) = c(t), \quad H(u,v)(e^{q\pi i t}) = H(t), \quad V(u,v,w)(e^{q\pi i t})=V(w)(t) .\]

\begin{proposition} \label{HVrel}
For all $\lambda \in U(1)$ one has the identities
\[ H(\lambda u, \lambda v)=H(u,v), \quad V(\lambda u, \lambda v, \lambda w)=V(u,v,w). \]
Furthermore, for all $z_1, z_2 \in \TT$ one has
\[ H(u,v)(z_1z_2) = H(u, z_1^2v)(z_2), \quad V(u,v,w)(z_1z_2)=V(u, z_1^2v, z_1w)(z_2). \]
As special cases, $H(u,v)(-z)=H(u,v)(z)$ and $V(u,v,w)(-z)=V(u,v,-w)(z)$.
\end{proposition}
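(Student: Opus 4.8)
## Proof Proposal

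The plan is to verify each identity directly from the explicit formula for the curve $c(u,v)$ in Definition~\ref{definition:c(u,v)} together with the transformation rule $\fs_{zx}(zv)=\fs_x(v)$ from equation~(\ref{eqn:circleact}). The whole proposition is essentially bookkeeping with the formula $c(u,v)(z)=\tfrac{1}{\sqrt 2}(z^{-1}u+zv)$, and there is no serious obstacle; the main care needed is to track how rescaling the second vector by $z_1^2$ interacts with the $z^{-1},z$ weights in the curve.

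First I would record the two elementary substitution identities for the curve itself. For $\lambda\in U(1)$ one has $c(\lambda u,\lambda v)(z)=\lambda\, c(u,v)(z)$ and $c(\lambda u,-\lambda v)(z)=\lambda\, c(u,-v)(z)$, both immediate from the formula. For the scaling in the second argument one computes
\[
c(u,z_1^2 v)(z_2)=\tfrac{1}{\sqrt 2}(z_2^{-1}u+z_2 z_1^2 v)
= z_1\cdot\tfrac{1}{\sqrt 2}\big((z_1 z_2)^{-1}u+(z_1 z_2)v\big)=z_1\, c(u,v)(z_1z_2),
\]
and similarly $c(u,-z_1^2 v)(z_2)=z_1\, c(u,-v)(z_1z_2)$ and, for $w\in\{u,v\}^\perp$, trivially $w$ is unchanged while $\{u,z_1^2v\}^\perp=\{u,v\}^\perp$ so $z_1 w\in\{u,z_1^2v\}^\perp$.

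Next I would feed these into the definitions of $H$ and $V$. For the $U(1)$-invariance: using $c(\lambda u,\lambda v)(z)=\lambda c(u,v)(z)$ and $c(\lambda u,-\lambda v)(z)=\lambda c(u,-v)(z)$, equation~(\ref{eqn:circleact}) gives
\[
H(\lambda u,\lambda v)(z)=\fs_{\lambda c(u,v)(z)}\big(\lambda c(u,-v)(z)\big)=\fs_{c(u,v)(z)}\big(c(u,-v)(z)\big)=H(u,v)(z),
\]
and the same argument with $w$ replaced by $\lambda w$ (noting $\lambda w\in\{\lambda u,\lambda v\}^\perp$) gives $V(\lambda u,\lambda v,\lambda w)=V(u,v,w)$. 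For the twisting identities: since $\fs$ only depends on the base point $c(u,z_1^2v)(z_2)=z_1 c(u,v)(z_1z_2)$, again (\ref{eqn:circleact}) yields
\[
H(u,z_1^2v)(z_2)=\fs_{z_1 c(u,v)(z_1z_2)}\big(z_1 c(u,-v)(z_1z_2)\big)=\fs_{c(u,v)(z_1z_2)}\big(c(u,-v)(z_1z_2)\big)=H(u,v)(z_1z_2),
\]
and the vertical version follows the same way from $c(u,z_1^2v)(z_2)=z_1 c(u,v)(z_1z_2)$ together with $V(u,v,w)(z_1z_2)=\fs_{c(u,v)(z_1z_2)}(w)$, where one multiplies the argument $w$ by $z_1$ to match the base-point rescaling. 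Finally, the special cases are obtained by setting $z_1=-1$ (so $z_1^2=1$): $H(u,v)(-z)=H(u,(-1)^2v)(z)\cdot$(with $z_2=z$)$=H(u,v)(z)$ after relabeling, and $V(u,v,w)(-z)=V(u,v,-w)(z)$ since $z_1w=-w$. I would close by remarking that these computations are purely formal consequences of (\ref{eqn:circleact}) and require no input about parallelism or the curvature tensor.
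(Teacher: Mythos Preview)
Your proposal is correct and follows essentially the same approach as the paper's (very brief) proof: both reduce everything to the formula for $c(u,v)$ together with equation~(\ref{eqn:circleact}). The only cosmetic difference is that the paper records the intermediate identity $c(u,v)(z_1z_2)=c(z_1^{-1}u,z_1v)(z_2)$ and then applies the $U(1)$-invariance with $\lambda=z_1$, whereas you package both steps into the single formula $c(u,z_1^2v)(z_2)=z_1\,c(u,v)(z_1z_2)$.
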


\begin{proof}
The first two identities follows by equation (\ref{eqn:circleact}). From Defintion \ref{definition:c(u,v)} one sees that 
\begin{equation*}
c(u,v)(z_1z_2)=c(z_1^{-1}u,z_1v)(z_2) \label{c(u,v) 2}
\end{equation*}
This relation and the first two identities gives the last two identities.
\end{proof}

We now have sufficient information on the constant horizontal and vertical vector fields in
Klingenberg's Lemma \ref{KlingenbergLemma}. Next we will define the bundles over 
projective Stiefel manifolds which correspond to the summands of the negative bundle.

The concept of $G$-vector bundles (over the real or complex numbers), for a topological group $G$, 
will be used (\cite{Atiyah} \S 1.6).
A $G$-space $E$ is a $G$-vector bundle over a $G$-space $X$ if 
\begin{enumerate}
\item[(i)] $E$ is a vector bundle over $X$,
\item[(ii)] The projection $E\to X$ is a $G$-map,
\item[(iii)] For each $g\in G$ the map $g\cdot : E_x \to E_{gx}$ is a vector space homomorphism.
\end{enumerate}
In the special case where the action of $G$ on $X$ is trivial, we see that each fiber becomes a $G$-module.

\begin{proposition}
\label{proposition:G-vb}
Let $G$ be a compact Lie group with a closed normal subgroup $H\subseteq G$. Let $X$ be a $G$-space 
such that the canonical projection $X\to X/H$ is a principal $H$-bundle. 
\begin{enumerate}
\item If $\eta \to X$ is a $G$-vector bundle then $\eta /H \to X/H$ is a $G/H$-vector bundle.
\item For $G$-vector bundles $\eta_1 \to X$ and $\eta_2 \to X$ there is a natural isomorphism of
$G/H$-vector bundles
\[ (\eta_1 \oplus \eta_2 )/H \cong \eta_1 /H \oplus \eta_2 /H. \]
\item If $\xi_1 \to Y$ and $\xi_2 \to Y$ are $G$-vector bundles and $f:X\to Y$ is a $G$-map then
there is a natural isomorphism of $G/H$-vector bundles
\[ f^* (\xi_1 \oplus \xi_2) /H \cong f^*(\xi_1 )/H \oplus f^*(\xi_2 )/H . \]
\end{enumerate}
\end{proposition}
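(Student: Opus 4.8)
The plan is to prove the three parts in order, since (2) follows from (1) applied to the direct sum and a little bookkeeping, and (3) follows from (2) together with naturality of pullback. Throughout, the main tool is the standard fact that quotienting a free action produces an honest bundle, together with the observation that all the relevant structure maps are $H$-equivariant and hence descend.

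For part (1), I would first recall why $\eta/H \to X/H$ is even well-defined as a space over $X/H$: since $H$ acts on the total space $\eta$ (via the restriction of the $G$-action) covering the $H$-action on $X$, and the latter action is free with $X \to X/H$ a principal bundle, the induced $H$-action on $\eta$ is also free (freeness on the base forces freeness on the total space), so $\eta/H$ is a manifold/space and the projection $\eta/H \to X/H$ is well-defined. To see it is a \emph{vector bundle}, I would argue locally: choosing a local section $s: V \to X$ of the principal bundle $X \to X/H$ over an open $V \subseteq X/H$ gives an $H$-equivariant trivialization $H \times V \xrightarrow{\cong} \pi^{-1}(V) \subseteq X$, and pulling $\eta$ back along $s$ gives a vector bundle on $V$ whose total space, after applying the $H$-action, is identified with $(\eta|_{\pi^{-1}(V)})/H$; this exhibits local triviality. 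The residual $G/H$-action on $\eta/H$ is well-defined because $H$ is \emph{normal}: for $g \in G$, $h \in H$, one has $g h = h' g$ for some $h' \in H$, so $g$ maps $H$-orbits to $H$-orbits, and the condition that $g\cdot : E_x \to E_{gx}$ is linear passes to the quotient fibers. This is the step I expect to require the most care — specifically, checking that the locally-defined trivializations are compatible and that the $G/H$-action is linear on fibers of $\eta/H$ — but it is a routine unwinding rather than a conceptual obstacle.

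For part (2), the key point is that $(\eta_1 \oplus \eta_2)/H$ and $(\eta_1/H) \oplus (\eta_2/H)$ have the same underlying set: a point of the former is an $H$-orbit of a pair $(e_1, e_2)$ with $e_i \in (\eta_i)_x$ for a common $x$, and a point of the latter is a pair of $H$-orbits $(He_1, He_2)$ lying over the same point $Hx \in X/H$; since $H$ acts freely, the second data determines a unique representative pair over any chosen lift of $Hx$, so the obvious map is a bijection. One then checks it is fiberwise linear and continuous with continuous inverse using the local trivializations from part (1) (a local section $s$ of $X \to X/H$ simultaneously trivializes the $H$-quotients of $\eta_1$, $\eta_2$ and $\eta_1 \oplus \eta_2$), and that it is $G/H$-equivariant because the $G$-action on $\eta_1 \oplus \eta_2$ is by definition the diagonal one. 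Naturality in the $\eta_i$ is immediate from the construction.

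For part (3), I would simply observe that pullback commutes with the operations involved: $f^*(\xi_1 \oplus \xi_2) = f^*\xi_1 \oplus f^*\xi_2$ as $G$-vector bundles over $X$ (since $f$ is a $G$-map, the pullback of a $G$-vector bundle is again a $G$-vector bundle, and pullback preserves direct sums), and then applying part (2) to the two $G$-vector bundles $f^*\xi_1$ and $f^*\xi_2$ over $X$ gives the natural isomorphism $f^*(\xi_1 \oplus \xi_2)/H = (f^*\xi_1 \oplus f^*\xi_2)/H \cong f^*\xi_1/H \oplus f^*\xi_2/H$ of $G/H$-vector bundles. No new ideas are needed here beyond assembling the previous parts; the only thing worth noting explicitly is that $f^*\xi_i$ really is a $G$-vector bundle over a space $X$ on which $X \to X/H$ is still principal $H$, so part (2) applies verbatim.
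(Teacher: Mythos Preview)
Your proposal is correct and follows essentially the same route as the paper: verify the three $G/H$-vector-bundle axioms for $\eta/H$, construct the obvious map $[v_1,v_2]\mapsto([v_1],[v_2])$ for (2), and deduce (3) from (2) plus $G$-equivariance of the standard pullback--sum isomorphism. The only cosmetic difference is that the paper outsources the technical verifications in (1) (that $\eta/H\to X/H$ is a vector bundle and that $E\to E/H$ sits in a pullback square over $X\to X/H$) to tom Dieck's book, whereas you sketch the local-section argument directly; similarly, in (2) the paper appeals to that pullback square to see the map is a fiberwise isomorphism, while you re-use local trivializations.
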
 

\begin{proof}
(1) Let $p:E\to X$ be the projection map for $\eta$. By \cite{tomD1} I.3.4 there is a $G/H$-action
on $E/H$ such that the following diagram commutes:
\[
\xymatrix@C=1 cm{
G\times E \ar[r] \ar[d] & E \ar[d] \\
G/H \times E/H \ar[r] & E/H. } 
\]
Likewise we have a $G/H$-action on $X/H$ and $p/H$ is a $G/H$-map by naturality. 
Thus, condition (ii) holds.

Furthermore, $p/H : E/H \to X/H$ is a vector bundle by \cite{tomD1} I.9.4, such that (i) holds, and there is a 
pullback diagram of vector bundles
\[
\xymatrix@C=1 cm{
E \ar[r] \ar[d]^-{p} & E/H \ar[d]^-{p/H} \\
X \ar[r] & X/H. } 
\]
Finally, the first of the diagrams above gives us a commutative diagram of fibers for $x\in X$ and $g\in G$:
\[
\xymatrix@C=1 cm{
E_x \ar[r]^-{g\cdot } \ar[d]^-{\cong } & E_{gx} \ar[d]^-{\cong} \\
(E/H)_{[x]} \ar[r]^-{[g]\cdot} & (E/H)_{[gx]}. } 
\]
The top map is linear since $E\to X$ is a $G$-vector bundle. The vertical maps are isomorphisms by the pullback
diagram above. So condition (iii) also holds.

(2) There is a well-defined map $\psi$ which makes the following diagram commute:
\[
\xymatrix@C=1 cm{
\eta_1 \oplus \eta_2 \ar[r] \ar@{=}[d] & (\eta_1 \oplus \eta_2 )/H \ar[d]^-{\psi} \\
\eta_1 \oplus \eta_2 \ar[r] & \eta_1 /H \oplus \eta_2 /H . } 
\]
The bottom map is surjective so $\psi$ is also surjective. Furthermore, $\psi$ is a bundle map over $X/H$ 
which maps a fiber of its domain to an isomorphic fiber of its codomain by the pullback diagram above. 
So $\psi$ is an isomorphism of vector bundles. One sees directly by its transformation rule
$\psi ([v_1 , v_2]) = ([v_1], [v_2])$ that $\psi$ is a $G/H$-map. 

(3)  The standard isomorphism $f^*(\xi_1 \oplus \xi_2) \cong f^* (\xi_1) \oplus f^* (\xi_2)$ is $G$-equivariant.
so we have an isomorphism $f^*(\xi_1 \oplus \xi_2)/H \cong  (f^* (\xi_1) \oplus f^* (\xi_2))/H$ of $G/H$-vector
bundles. The result then follows by (2).
\end{proof}

The projection map $\St 2 {n+1} \to \St 2 {n+1} / diag_2 (U(1)) = \PStnull 2 {n+1}$ is a principal $U(1)$-bundle
by standard arguments. So by (1) in the propositon above, we have the following construction of $\TT$-vector
bundles:

\begin{definition}
Let $f:\Sta 2 {n+1} q \to X$ be a $\TT^2$-map and let $\xi$ be a complex $\TT^2$-vector bundle 
over $X$. Form the pullback $f^*(\xi )$. The quotient $f^* (\xi )/ U(1)$ is a complex $\TT$-vector bundle 
which we denote  
\[ \PV_{2,q} (f,\xi ) \to \PSt 2 {n+1} q. \]
\end{definition}

We only need this construction for a special type of torus vector bundles.
\begin{definition}
Let $\eta \to X$ be a complex vector bundle and $i$, $j$ two integers. Equip the total space of $\eta$ by a 
$\TT^2$-action via complex multiplication in the fibers as follows:
\[ (z_1, z_2)*v=z_1^iz_2^jv .\]
The resulting $\TT^2$-vector bundle over the trivial $\TT^2$-space $X$ is denoted $\eta (i,j) \to X$.
\end{definition}

Let $\gamma_2$ be the canonical bundle over the Grassmannian 
$\Gr 2 {n+1}$. Its total space consists of the pairs $(V, v)$ where $V$ is a complex two dimensional subspace
of $\CC^{n+1}$ and $v \in V$. It has an orthogonal complement bundle $\gamma_2^\perp$ over $\Gr 2 {n+1}$ 
consisting of pairs $(V, w)$ where $w \in V^\perp \subseteq \CC^{n+1}$. Let
$\pi : \Sta 2 {n+1} q \to \Gr 2 {n+1}$ be the projection which maps 
a frame to its complex span. We equip the Grassmannian with the trivial $\TT^2$-action such that
$\pi$ becomes equivariant. Finally, for a complex vector space $V$, we wite $\overline V$ for its conjugate 
vectorspace. As real vector spaces $V$ and $\overline V$ are the same but $z\cdot v=\overline zv$ for
$v\in V$ and $z\in \CC$. For a complex vector bundle $\xi$ we write $\overline \xi$ for its conjugate
vector bundle.

\begin{definition} \label{def:nu}
For $r=q$ mod $2$ we define $\TT$-vector bundles as follows:
\[
\nu_{r,q} = \PV_{2,q} (\pi, \gamma_2^\perp (\frac {r+q} 2, 1) ) , \quad
\overline{\nu}_{r,q} = \PV_{2,q} (\pi , \overline { \gamma_2^\perp } (\frac {r-q} 2, -1 )) .
\]
\end{definition}

Two product bundles also enter in the description. For a 
$\TT$ representation $V$ we let $\epsilon_q (V)$ denote the 
product bundle $pr_1:\PSt 2 {n+1} q \times V \to \PSt 2 {n+1} q$. 
Let $\CC {(s)}$ for $s\in \ZZ$ denote the complex numbers $\CC$ equipped with the
$\TT$-action $z*\lambda = z^s\lambda$, and equip the real numbers $\RR$ with 
the trivial $\TT$-action. The product bundles which enter are
$\epsilon_q (\RR )$ and $\epsilon_q (\CC {(p)} )$. Note that 
$\epsilon_q (\RR )$ is a real $\TT$ vector bundle and that the others 
are complex $\TT$ vector bundles.

Write $\real (z)$ for the real part of a complex number $z$.
We have the following result, where the summands in Klingenbergs Theorem \ref{Klingenberg} have been labeled by 
an additional index $q$ indicating that they are vector bundles over $B_q(\CP^n)$.
\begin{theorem}
\label{neg-isos} 
Let $p$, $q$ and $r$ be positive integers with $p<q$ and $r<q$.
There are isomorphisms of $\TT$-vector bundles over the $\TT$-equivariant diffeomorphism 
\[ \phi_q : \PSt 2 {n+1} q \to \B q n\]
as follows, where $h_q$ is defined for $q=0$ mod $2$ and $k_{r,q}$ is defined for $r=q$ mod $2$: 
\begin{align*}
& f_q: \epsilon_q (\RR ) \to \eta_{h,0,q} \, ; 
& & f_q ([u,v],s)(z) = s H(u,v)((\sqrt z)^q), \\ 
& g_q: \epsilon_q (\CC {(p)} ) \to \sigma_{h,p,q} \, ;
& & g_q ([u,v],\lambda )(z) = 
\real (\lambda z^{p}) H(u,v)((\sqrt z)^q), \\
& h_q: \nu_{0,q} \to \eta_{v,0,q} \, ;
& & h_q ([u,v,w])(z) = V(u,v,w)((\sqrt z)^q), \\
& k_{r,q}: \nu_{r,s} \oplus {\overline \nu}_{r,s} \to \sigma_{v,r,q} \, ;  
& & k_{r,q} ([u,v,w_1,w_2])(z) = \\
& & & V(u,v,(\sqrt z)^rw_1+(\sqrt z)^{-r}w_2)((\sqrt z)^q).   
\end{align*}
In the last formula, $\sqrt z$ appears twice. One must use the same choice of square root in 
both places.
\end{theorem}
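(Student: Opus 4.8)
\textbf{Proof plan for Theorem \ref{neg-isos}.}

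The plan is to verify, for each of the four maps in turn, that (a) the formula is well-defined on the projective Stiefel manifold (i.e.\ invariant under the diagonal $U(1)$-action that is quotiented out in the construction $\PV_{2,q}$), (b) it is fiberwise $\CC$-linear (or $\RR$-linear for $f_q$) and a bundle map over $\phi_q$, (c) it is $\TT$-equivariant for the stated actions, and (d) it is a fiberwise isomorphism onto the claimed Klingenberg summand. Throughout I would write $c = (\sqrt z)^q$ and use the translation dictionary already established after Definition \ref{definition:c(u,v)}, namely $c(u,v)(e^{q\pi i t}) = c(t)$, $H(u,v)(e^{q\pi i t}) = H(t)$, $V(u,v,w)(e^{q\pi i t}) = V(w)(t)$, together with Lemma \ref{lemma:vector fields} identifying $H(t)$ as the parallel horizontal field orthogonal to $f'$ and the $V(w)(t)$ as the parallel vertical fields. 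Comparing with Klingenberg's Lemma \ref{KlingenbergLemma}, the field $\cos(2\pi p t)A + \sin(2\pi pt)B$ in $E_{h,p}$ is exactly $\real(\lambda z^p)H(u,v)(c)$ with $z = e^{2\pi i t}$ after writing $\lambda = a - ib$, and similarly for the vertical eigenvectors once one matches the half-integer frequencies $\pi r t$ with the presence of $(\sqrt z)^{\pm r}$; for $f_q$ and $h_q$ one is in the degenerate frequency-zero case where the fiber is one copy of the relevant eigenspace.

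For (a) I would invoke Proposition \ref{HVrel}: $H(\lambda u, \lambda v) = H(u,v)$ and $V(\lambda u, \lambda v, \lambda w) = V(u,v,w)$ for $\lambda \in U(1)$ show that $f_q, g_q$ descend over $\PStnull 2 {n+1}$, while for $h_q$ and $k_{r,q}$ one must also chase through how the twist $\gamma_2^\perp(\tfrac{r+q}{2},1)$ and $\overline{\gamma_2^\perp}(\tfrac{r-q}{2},-1)$ interact with the $\iota_q$-action: an element $[u,v,w]$ of $\nu_{r,q}$ is an equivalence class under $(u,v,w)\sim(\lambda u,\lambda v,\lambda w)$ (the diagonal $U(1)$ that is modded out in $\PV_{2,q}$), and well-definedness of $V(u,v,w)(c)$ under this is again Proposition \ref{HVrel}. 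For (c), the $\TT$-action on $\PSt 2 {n+1} q$ is $z_1 * [u,v] = [u, z_1^q v]$ (from the Remark), so one computes $f_q(z_1 * [u,v], s)$ by replacing $v$ with $z_1^q v$ and uses the cocycle relations $H(u, z_1^2 v)(z_2) = H(u,v)(z_1 z_2)$, $V(u, z_1^2 v, z_1 w)(z_2) = V(u,v,w)(z_1 z_2)$ of Proposition \ref{HVrel}, matched against the definition of the $\TT$-action on $\B q n$ via $(e^{2\pi i s} * f)(t) = f(s+t)$; the square-root ambiguity in $\sqrt z$ is harmless for $H$ because $H(u,v)(-z) = H(u,v)(z)$, whereas for $V$ in $k_{r,q}$ the two occurrences of $\sqrt z$ must agree precisely so that the sign change $V(u,v,w)(-z) = V(u,v,-w)(z)$ is compensated by $(\sqrt z)^r \mapsto -(\sqrt z)^r$ when $r$ is odd — this is exactly why the direct sum $\nu_{r,q}\oplus\overline\nu_{r,q}$ (rather than one summand) appears. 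For (d), linearity is immediate from the formulas since $H(u,v)(c)$ and $V(u,v,w)(c)$ depend linearly on the scalar / on $w$, and bijectivity on fibers follows because for fixed $[u,v]$ the assignment $w \mapsto V(u,v,w)$ and $\lambda \mapsto \real(\lambda z^p)H(u,v)$ are linear isomorphisms onto the respective eigenspaces $E_{v,r}$, $E_{h,p}$ by Lemma \ref{KlingenbergLemma} and Lemma \ref{lemma:vector fields}.

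The main obstacle will be step (c) for $k_{r,q}$, and more precisely the bookkeeping of the half-integer exponents: one has to be careful that $(\sqrt z)^r$ with $r \equiv q \bmod 2$ combined with $(\sqrt z)^q$ produces genuine (single-valued) functions of $z$ on the relevant quotient, and that the equivariance computation threads the $U(1)$-quotient, the $\iota_q$-twist, and the $\TT^2 = \TT \times U(1)$ decomposition consistently. A clean way to organize this is to do everything one level up, on $\Sta 2 {n+1} q$ with its genuine $\TT^2$-action, check $\TT^2$-equivariance of the corresponding maps of pulled-back bundles $\pi^*(\gamma_2^\perp(\cdots))$ etc.\ there (where there are no square roots, only the honest characters $z_1^q z_2$ and $z_2$ from $\iota_q$), and then pass to the $U(1)$-quotient using Proposition \ref{proposition:G-vb}; the square roots only reappear through the relation $c = (\sqrt z)^q$ when one finally identifies the quotient $\TT$-action with rotation of geodesics via Theorem \ref{thm:diffeo}. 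With that reorganization the remaining verifications are the routine trigonometric identifications of eigenvectors and I would not spell them out in full.
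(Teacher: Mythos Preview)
Your proposal is correct and follows essentially the same route as the paper: verify well-definedness via Proposition \ref{HVrel}, check $\TT$-equivariance by the cocycle identities of that proposition, and identify the fibers with Klingenberg's eigenspaces via Lemma \ref{KlingenbergLemma} and Lemma \ref{lemma:vector fields}. Two small differences worth noting: the paper does not lift to $\Sta 2 {n+1} q$ with its $\TT^2$-action but computes directly on the projective Stiefel manifold with the square roots (handling the sign issues exactly as you outline using $H(u,v)(-z)=H(u,v)(z)$ and $V(u,v,w)(-z)=V(u,v,-w)(z)$), and rather than arguing bijectivity on fibers it observes at the outset that domain and codomain fibers have the same real dimension, so well-definedness plus surjectivity on fibers suffices---for $k_{r,q}$ surjectivity is shown by explicitly solving $w_1+w_2=a$, $i(w_1-w_2)=b$. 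One minor correction to your step (b): all four maps are isomorphisms of \emph{real} $\TT$-vector bundles (the target bundles $\sigma_{h,p,q}$, $\sigma_{v,r,q}$ carry no natural complex structure, and $g_q$ involves $\real(\cdot)$), so ``fiberwise $\CC$-linear'' should read $\RR$-linear throughout.
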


\begin{proof}
For all four maps, the real dimension of the fiber of the domain equals the
real dimension of the fiber of the codomain. So it suffices to show that
each map is well-defined, surjective on fibers and $\TT$-equivariant.

The map $f_q$ is independent of the choice of representative 
for the class $[u,v]$ and the choice of square root of $z$ by Proposition \ref{HVrel}. So it is well-defined. 
By Lemma \ref{KlingenbergLemma} and Lemma \ref{lemma:vector fields}, $f_q$ 
is surjective on fibers. 
By proposition \ref{HVrel} we see that it is $\TT$-equivariant as follows:
\begin{align*}
f_q([u,v],s)(z_1z_2) &= s H(u,v)((\sqrt{z_1})^q (\sqrt{z_2} )^q) \\
&= s H(u, z_1^qv)((\sqrt{z_2} )^q) 
= f_q(z_1*[u,v],s)(z_2).  
\end{align*}

The map $g_q$ is well-defined by Proposition \ref{HVrel}.
For complex numbers $z_1 = \alpha_1+i\beta_1$ 
and $z_2 = \alpha_2+i\beta_2$ written in standard form
we have $\real (z_1 \overline z_2) = \alpha_1 \alpha_2 + \beta_1 \beta_2$. So for
$\lambda = \alpha + i \beta$ and $z=e^{-2\pi i t}$ we get
\[ \real (\lambda  z^{p}) = \alpha \cos (2\pi p t) + \beta \sin (2\pi p t) \]
such that $g_q$ is surjective on fibers by Lemma \ref{KlingenbergLemma} and
Lemma \ref{lemma:vector fields}. 
We see that $g_q$ is $\TT$-equivariant as follows: 
\begin{align*}
g_q([u,v],\lambda )(z_1z_2) &= 
\real (\lambda (z_1z_2)^{p}) f_q([u,v],1)(z_1z_2)\\
&= \real (z_1^p \lambda z_2^p) f_q(z_1*[u,v],1)(z_2)
= g_q(z_1*([u,v], \lambda ))(z_2).  
\end{align*}

The map $h_q$ is well-defined for $q$ even by Proposition \ref{HVrel}. It is surjective on fibers by
Lemma \ref{KlingenbergLemma} and Lemma \ref{lemma:vector fields}. By Proposition \ref{HVrel} we see 
that $h_q$ is $\TT$-equivariant as follows: 
\begin{align*}
h_q([u,v,w])(z_1z_2) &= V(u,v,w)((\sqrt{z_1} )^q (\sqrt{z_2} )^q ) 
= V(u, z_1^qv, z_1^{\frac q 2}w )((\sqrt{z_2} )^q) \\
& = h_q(z_1*[u,v,w])(z_2)  
\end{align*}

Finally, consider the map $k_{r,q}$ where $r=q$ mod $2$. For $\lambda \in U(1)$ we have
\[ [u,v,w_1,w_2]=[\lambda u, \lambda v, \lambda w_1, \lambda^{-1} \cdot w_2] =
[\lambda u, \lambda v, \lambda w_1, \lambda w_2]. \]
So by Proposition \ref{HVrel} the map $k_{r,q}$ is independent on the choice of representative 
of the class $[u,v,w_1,w_2]$. By the last remark of the proposition it is also independent of the choice
of square root of $z$ and hence it is well-defined. 

For $z=e^{2\pi i t}$ with choice of square root $\sqrt z = e^{\pi i t}$ we have
\begin{align*} 
& k_{r,q}([u,v,w_1,w_2])(z) = V(u,v,e^{\pi r it}w_1+e^{-\pi rit}w_2)(e^{\pi qit}) \\
& = \cos (r\pi t) V(u,v,w_1+w_2)(e^{\pi qit}) + \sin (r\pi t) V(u,v, i(w_1-w_2))(e^{\pi qit}).
\end{align*}
For a given pair of vectors $a$ and $b$ in $\{ u, v \}^\perp$ the two equations
$w_1+w_2=a$ and $i(w_1-w_2)=b$ have the solution
$w_1 = \frac 1 2 (a-ib)$, $w_2 = \frac 1 2 (a+ib)$.
Comparing with Lemma \ref{KlingenbergLemma} and Lemma
\ref{lemma:vector fields} we see that the surjectivity on fibers holds.

Finally, we check that $k_{r,q}$ is $\TT$-equivariant. Firstly, by Proposition \ref{HVrel} we have
\begin{align*}
& k_{r,q}([u,v,w_1,w_2])(z_1z_2) \\ 
&= V(u,v,(\sqrt {z_1})^{r}(\sqrt{z_2} )^{r}w_1+(\sqrt {z_1})^{-r}(\sqrt{z_2} )^{-r}w_2)((\sqrt{z_1})^q (\sqrt{z_2} )^q)\\
&=  V(u, z_1^qv, (\sqrt {z_1})^{r+q}(\sqrt{z_2} )^{r}w_1+(\sqrt {z_1})^{-(r-q)}(\sqrt{z_2} )^{-r}w_2)((\sqrt{z_2} )^q).
\end{align*}
Secondly,
\[ z_1*[u,v,w_1,w_2]=[u, z_1^qv, z_1^{\frac {r+q} 2}w_1, z_1^{\frac {r-q} 2}\cdot w_2]
= [u, z_1^qv, (\sqrt{z_1})^{r+q}w_1, (\sqrt{z_1})^{-(r-q)}w_2] \]
such that $k_{r,q}(z_1*[u,v,w_1,w_2])(z_2)$ equals the above expression.
\end{proof}

Combining Theorem \ref{Klingenberg} and Theorem \ref{neg-isos} we obtain our first main result:

\begin{theorem}
\label{main1}
For every positive integer $q$, there are isomorphisms of $\TT$-vector bundles as follows:
\begin{align*}
& \mu_q^- \cong \epsilon_q (\RR ) \oplus \bigoplus_{0<s<q} \epsilon_q (\CC (s)) \oplus 
\bigoplus_{\substack{0<r<q \\ r = q \text{ mod } 2}} (\nu_{r,q} \oplus \overline{\nu}_{r,q})  &\text{for $q$ odd,} \\
& \mu_q^- \cong \epsilon_q (\RR ) \oplus \bigoplus_{0<s<q} \epsilon_q (\CC (s)) \oplus \nu_{0,q} \oplus 
\bigoplus_{\substack{0<r<q \\ r = q \text{ mod } 2}} (\nu_{r,q} \oplus \overline{\nu}_{r,q})
&\text{for $q$ even.}
\end{align*}
\end{theorem}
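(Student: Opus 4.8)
The plan is to read off $\mu_q^-$ from Klingenberg's Theorem \ref{Klingenberg} and then trade each of its summands for the bundle over the projective Stiefel manifold furnished by Theorem \ref{neg-isos}, transporting everything from $\B q n$ to $\PSt 2 {n+1} q$ along the $\TT$-equivariant diffeomorphism $\phi_q$ of Theorem \ref{thm:diffeo}. Since $\phi_q$ is a $\TT$-equivariant diffeomorphism, the pullback $\phi_q^*\mu_q^-$ is a $\TT$-vector bundle over $\PSt 2 {n+1} q$ and $\mu_q^-\cong\phi_q^*\mu_q^-$ as $\TT$-bundles over $\phi_q$; so it suffices to identify $\phi_q^*\mu_q^-$ up to $\TT$-isomorphism.

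First I would specialize Theorem \ref{Klingenberg} to $\alpha=2$ and note that the Whitney sum decomposition of $\mu_q^-$ is $\TT$-equivariant, the eigenspace subbundles $\eta_{h,0}$, $\sigma_{h,p}$, $\eta_{v,0}$, $\sigma_{v,r}$ being preserved by the rotation action because the Hessian, and hence its eigenspace decomposition, is rotation invariant. Writing the summands with the extra index $q$ as in the paragraph preceding Theorem \ref{neg-isos}, this reads
\[
\mu_q^-\cong\eta_{h,0,q}\oplus\bigoplus_{p=1}^{q-1}\sigma_{h,p,q}\oplus\bigoplus_{p=1}^{(q-1)/2}\sigma_{v,2p-1,q}
\qquad\text{for $q$ odd},
\]
\[
\mu_q^-\cong\eta_{h,0,q}\oplus\bigoplus_{p=1}^{q-1}\sigma_{h,p,q}\oplus\eta_{v,0,q}\oplus\bigoplus_{p=1}^{(q-2)/2}\sigma_{v,2p,q}
\qquad\text{for $q$ even}.
\]

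Next I would pull these decompositions back along $\phi_q$ and apply the four isomorphisms of Theorem \ref{neg-isos} term by term — they are $\TT$-bundle isomorphisms over $\phi_q$, so this is legitimate: $f_q$ replaces $\eta_{h,0,q}$ by $\epsilon_q(\RR)$, $g_q$ replaces each $\sigma_{h,p,q}$ ($0<p<q$) by $\epsilon_q(\CC(p))$, $h_q$ replaces $\eta_{v,0,q}$ by $\nu_{0,q}$ when $q$ is even, and $k_{r,q}$ replaces each $\sigma_{v,r,q}$ (positive $r<q$ with $r\equiv q\bmod 2$) by $\nu_{r,q}\oplus\overline{\nu}_{r,q}$. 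Finally I would reindex: for $q$ odd the numbers $r=2p-1$ with $1\le p\le(q-1)/2$ are precisely the integers with $0<r<q$ and $r\equiv q\bmod 2$, and for $q$ even the numbers $r=2p$ with $1\le p\le(q-2)/2$ are precisely the integers with $0<r<q$ and $r\equiv q\bmod 2$, while the $r=0$ vertical summand is the separate term $\nu_{0,q}$. Substituting these index sets yields the two claimed formulas.

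No genuine obstacle arises: all the substance is already contained in Theorems \ref{Klingenberg} and \ref{neg-isos}. The only points requiring attention are the bookkeeping of the index ranges in the reindexing step and the routine observation that $\TT$-equivariant pullback along the fixed map $\phi_q$ carries a $\TT$-equivariant Whitney sum over $\B q n$ to one over $\PSt 2 {n+1} q$ compatibly with the maps $f_q$, $g_q$, $h_q$, $k_{r,q}$.
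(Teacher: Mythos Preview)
Your proposal is correct and follows exactly the paper's approach: the paper simply states that Theorem \ref{main1} is obtained by combining Theorem \ref{Klingenberg} and Theorem \ref{neg-isos}, and your write-up is a careful unpacking of that one-line deduction, including the reindexing that the paper leaves implicit. There is nothing to add or change.
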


\section{Projective bundles and Borel constructions}
\label{sec:PV_2 bundles}

In this section we establish results which are aimed at calculating 
characteristic classes of the Borel construction with respect to the 
$\TT$-action of the negative bundle.

\begin{proposition}
\label{PV properties}
The following statements hold:
\begin{enumerate}
\item Let $\xi_1 \to X$ and $\xi_2 \to X$ be $\TT^2$-vector bundles and let
$f: \Sta 2 {n+1} q \to X$ be a $\TT^2$-map. Then there is an isomorphism of $\TT$-vector bundles
\begin{equation*}
\PV_{2,q} (f, \xi_1 \oplus \xi_2 ) \cong 
\PV_{2,q} (f, \xi_1 ) \oplus \PV_{2,q} (f, \xi_2 ).
\end{equation*}
\item Write $\epsilon^k_Y$ for the trivial $k$-dimensional complex vector bundle over a space $Y$. Equip $Y$ 
with the trivial $\TT^2$-action. Let $g: \Sta 2 {n+1} q \to Y$ be a $\TT^2$-map   
and let $t: \Sta 2 {n+1} q \to *$ denote the map to a point. Then for all integers $i$ and $j$ one has 
\begin{equation*}
\PV_{2,q} (g, \epsilon^k_Y (i,j)) = \PV_{2,q} (t, \epsilon^k_* (i,j)) =: \PV_{2,q} (\epsilon^k (i,j)).
\end{equation*}
Furthermore, there is a decomposition 
\begin{equation*}
\PV_{2,q} (\epsilon^k (i,j)) = \bigoplus_{m=1}^k\PV_{2,q} (\epsilon^1 (i,j)).
\end{equation*}
\end{enumerate} 
\end{proposition}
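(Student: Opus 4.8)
The plan is to treat the two parts separately, both by unwinding the definition $\PV_{2,q}(f,\xi) = f^*(\xi)/U(1)$ and applying Proposition \ref{proposition:G-vb} with $G = \TT^2$ and $H = U(1) = diag_2(U(1))$; recall the projection $\Sta 2 {n+1} q \to \PSt 2 {n+1} q$ is a principal $U(1)$-bundle, so that proposition applies. For part (1), first I would note that the standard pullback isomorphism $f^*(\xi_1 \oplus \xi_2) \cong f^*(\xi_1) \oplus f^*(\xi_2)$ over $\Sta 2 {n+1} q$ is $\TT^2$-equivariant, since the $\TT^2$-structure on a pullback is the diagonal one coming from the base action (trivial here, as the domain carries the Stiefel action) and the bundle action, and the isomorphism respects both. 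Then part (3) of Proposition \ref{proposition:G-vb}, applied with $\xi_1, \xi_2$ in place of the bundles there and $f$ in place of the $G$-map, gives $f^*(\xi_1 \oplus \xi_2)/U(1) \cong f^*(\xi_1)/U(1) \oplus f^*(\xi_2)/U(1)$ as $\TT$-vector bundles, which is exactly the claimed isomorphism $\PV_{2,q}(f, \xi_1 \oplus \xi_2) \cong \PV_{2,q}(f,\xi_1) \oplus \PV_{2,q}(f,\xi_2)$.

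For part (2), the first equality $\PV_{2,q}(g, \epsilon^k_Y(i,j)) = \PV_{2,q}(t, \epsilon^k_*(i,j))$ holds because $\epsilon^k_Y(i,j) = g^*(\epsilon^k_*(i,j))$ as $\TT^2$-vector bundles — a trivial bundle over a trivial $\TT^2$-space with the fiberwise action $z_1^i z_2^j$ is literally the pullback along $Y \to *$ of the corresponding bundle over the point — and so $g^*(\epsilon^k_Y(i,j)) = g^*\!\big(g^*(\epsilon^k_*(i,j))\big) = (g \circ t')^*(\cdots)$ agrees with $t^*(\epsilon^k_*(i,j))$, where $t = t_Y \circ g$ and $t_Y : Y \to *$; here one uses functoriality of pullback and the fact that both composites equal the structure map $\Sta 2 {n+1} q \to *$. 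Passing to the $U(1)$-quotient then gives the stated equality of $\PV_{2,q}$ bundles.

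For the final decomposition $\PV_{2,q}(\epsilon^k(i,j)) = \bigoplus_{m=1}^k \PV_{2,q}(\epsilon^1(i,j))$, I would observe that $\epsilon^k_*(i,j) = \bigoplus_{m=1}^k \epsilon^1_*(i,j)$ as $\TT^2$-vector bundles over the point — the fiber $\CC^k$ with the scalar action $z_1^i z_2^j$ splits $\TT^2$-equivariantly into $k$ copies of $\CC$ with the same action, choosing the standard basis — and then apply part (1) (with $f = t$, $\xi_m = \epsilon^1_*(i,j)$) repeatedly, or simply note that $\PV_{2,q}$ commutes with finite direct sums by part (1). The main obstacle, such as it is, is purely bookkeeping: being careful that the $\TT^2$-equivariant structures on trivial bundles over trivial $\TT^2$-bases are genuinely the pullbacks claimed, and that the splitting $\CC^k \cong \CC^{\oplus k}$ is equivariant for the diagonal scalar action — both are immediate once spelled out, so there is no substantive difficulty; the content is entirely in Proposition \ref{proposition:G-vb}, which we are free to invoke.
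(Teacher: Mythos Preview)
Your proposal is correct and follows essentially the same route as the paper: part (1) is exactly Proposition \ref{proposition:G-vb}(3), and for part (2) the paper likewise identifies $g^*(\epsilon^k_Y(i,j))$ and $t^*(\epsilon^k_*(i,j))$ as literally the same $\TT^2$-bundle $\Sta 2 {n+1} q \times \CC^k$ with the evident action, then invokes part (1) for the splitting into lines. The only cosmetic difference is that the paper writes down the resulting bundle and action explicitly rather than phrasing it via functoriality of pullback along $Y\to *$; your slightly garbled composite $g^*\big(g^*(\cdots)\big)$ should read $g^*\big(t_Y^*(\epsilon^k_*(i,j))\big)=(t_Y\circ g)^*(\epsilon^k_*(i,j))=t^*(\epsilon^k_*(i,j))$, but the intent is clear.
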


%

\begin{proof}
(1) This is a special case of Proposition \ref{proposition:G-vb} (3).

(2) Both pullbacks $f^*(\epsilon^k_Y (i,j))$ and $t^*(\epsilon^k_* (i,j))$ gives the same $\TT^2$-vector
bundle. Its projection map is $pr_1: \Sta 2 {n+1} q \times \CC^k \to \Sta 2 {n+1} q$ and the action on 
the total space is given by $(z_1, z_2)*((u,v),w)=((z_2u,z_1^q z_2v), z_1^iz_2^jw)$. This observation gives us
the first part of the statement. The last part follows from (1).
\end{proof}

Let $\gamma_1 \to \CP^n$ be the canonical line bundle. Its total space consists of pairs $(V, v)$ where
$V$ is a complex one dimensional subspace of $\CC^{n+1}$ and $v\in V$. The projection map sends
$(V,v)$ to $V\in \CP^n$. Sometimes we use the quotient space model $S^{n+1}/U(1)$ for $\CP^n$ instead.
Then a point in $\CP^n$ is written as $[u]$ where $u\in S^{2n+1}$.

\begin{definition} 
Let $\pi_i$ for $i=1,2$ be the composite maps
\[
\xymatrix@C=1.5 cm{
\pi_i :\Sta 2 {n+1} q \ar[r] & \PSt 2 {n+1} q \ar[r]^-{pr_i} & \CP^n,
}\] 
where $pr_1 ([u,v])=[u]$ and $pr_2([u,v])=[v]$. Note that $\pi_i$ is a $\TT^2$-map where
the action on $\CP^n$ is trivial. For $r=q$ mod $2$ we
define one dimensional complex $\TT$-vector bundles over $\PSt 2 {n+1} q$ by
\begin{align*}
L_{0,r,q} &= \PV_{2,q} (\epsilon^1 (\frac {r+q} 2 , 1)), 
& {\overline L}_{0,r,q} &= \PV_{2,q} ({\overline \epsilon}^1 (\frac {r-q} 2 , -1)), \\ 
L_{1,r,q} &= \PV_{2,q} (\pi_1 , \gamma_1 (\frac {r+q} 2 , 1)), 
& {\overline L}_{1,r,q} &= \PV_{2,q} (\pi_1 , {\overline \gamma_1} (\frac {r-q} 2 , -1)), \\ 
L_{2,r,q} &= \PV_{2,q} (\pi_2 , \gamma_1 (\frac {r+q} 2 , 1)), 
& {\overline L}_{2,r,q} &= \PV_{2,q} (\pi_2 , {\overline \gamma_1} (\frac {r-q} 2 , -1)).
\end{align*}
\end{definition}

\begin{theorem} \label{sumiso}
There are $\TT$-equivariant isomorphisms for
$r=q$ mod $2$ as follows:
\begin{align*}
& \nu_{r,q} \oplus L_{1,r,q} \oplus L_{2,r,q} \cong 
L_{0,r,q}^{\oplus (n+1)} , \\
& \overline{\nu}_{r,q} \oplus \overline{L}_{1,r,q} \oplus \overline{L}_{2,r,q} \cong 
\overline{L}_{0,r,q}^{\oplus (n+1)} .
\end{align*}
\end{theorem}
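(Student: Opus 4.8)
The idea is to recognize the bundle $L_{0,r,q}^{\oplus(n+1)}$ as coming from a trivial rank-$(n+1)$ $\TT^2$-bundle over $\St 2{n+1}$ and to split off the two canonical line sub-bundles determined by the two frame vectors, leaving exactly $\nu_{r,q}$. Concretely, over $\Gr 2{n+1}$ (with trivial $\TT^2$-action) one has the tautological orthogonal decomposition $\epsilon^{n+1}_{\Gr 2{n+1}} \cong \gamma_2 \oplus \gamma_2^\perp$ of complex vector bundles. Pulling this back along $\pi : \Sta 2{n+1}q \to \Gr 2{n+1}$ and twisting all three bundles by the same $\TT^2$-weight $(\tfrac{r+q}2,1)$ gives a $\TT^2$-equivariant isomorphism $\epsilon^{n+1}(\tfrac{r+q}2,1) \cong \pi^*\gamma_2(\tfrac{r+q}2,1) \oplus \pi^*\gamma_2^\perp(\tfrac{r+q}2,1)$. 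Applying the functor $\PV_{2,q}(-)$ and using Proposition \ref{PV properties}(1) together with the definition $\nu_{r,q} = \PV_{2,q}(\pi,\gamma_2^\perp(\tfrac{r+q}2,1))$, this already reduces the first isomorphism to identifying $\PV_{2,q}(\pi,\gamma_2(\tfrac{r+q}2,1)) \cong L_{1,r,q}\oplus L_{2,r,q}$.

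For that last identification, I would further split the rank-$2$ tautological bundle $\pi^*\gamma_2$ over $\Sta 2{n+1}q$: since a point of $\Sta 2{n+1}q$ is an \emph{ordered} orthonormal frame $(u,v)$, the fiber $\linspan_\CC(u,v)$ carries the canonical $\TT^2$-equivariant line sub-bundles $\CC u$ and $\CC v$, and $\pi^*\gamma_2 \cong \CC u \oplus \CC v$ as $\TT^2$-bundles over $\Sta 2{n+1}q$. The point is that the frame map $(u,v)\mapsto u$ is covered by a bundle map $\CC u \to \gamma_1$ over $\pi_1$, and likewise $(u,v)\mapsto v$ is covered by $\CC v \to \gamma_1$ over $\pi_2$; here one must check the $\TT^2$-weights match, i.e. that after twisting by $(\tfrac{r+q}2,1)$ the sub-bundle $\CC u$ corresponds to $\pi_1^*\gamma_1(\tfrac{r+q}2,1)$ and $\CC v$ to $\pi_2^*\gamma_1(\tfrac{r+q}2,1)$. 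Since $\iota_q(z_1,z_2)$ sends $u\mapsto z_1^qz_2 u$ and $v\mapsto z_2 v$, the twists land exactly on $L_{1,r,q}$ and $L_{2,r,q}$ respectively after applying $\PV_{2,q}$, using Proposition \ref{PV properties}(1) once more and the definitions. Assembling, $\PV_{2,q}(\pi,\gamma_2(\tfrac{r+q}2,1)) \cong L_{1,r,q}\oplus L_{2,r,q}$, which completes the first isomorphism.

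The second isomorphism is entirely parallel: conjugate everything. Starting from $\overline{\epsilon^{n+1}_{\Gr 2{n+1}}} \cong \overline{\gamma_2}\oplus\overline{\gamma_2^\perp}$, twist by the $\TT^2$-weight $(\tfrac{r-q}2,-1)$, apply $\PV_{2,q}$, and split the pulled-back $\overline{\gamma_2}$ into $\overline{\CC u}\oplus\overline{\CC v}$ with the matching conjugate weights; this yields $\overline\nu_{r,q}\oplus\overline L_{1,r,q}\oplus\overline L_{2,r,q}\cong \overline L_{0,r,q}^{\oplus(n+1)}$. One should note that conjugation commutes with direct sums and with the $\PV_{2,q}$ construction in the obvious way, and that the weight $(\tfrac{r-q}2,-1)$ is chosen precisely so that conjugating a weight-$(\tfrac{r+q}2,1)$ action and then multiplying by $(\tfrac{r-q}2,-1)\cdot\overline{(\tfrac{r+q}2,1)}^{-1}$-type bookkeeping is consistent — but this is just the definition of $\overline\nu_{r,q}$ in Definition \ref{def:nu}, so no new content arises.

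\textbf{Main obstacle.} The only genuinely delicate point is the bookkeeping of $\TT^2$-weights: one must verify that twisting the tautological decomposition by a single weight $(\tfrac{r+q}2,1)$ does produce, after $\PV_{2,q}$, precisely the three bundles $L_{0,r,q}$, $L_{1,r,q}$, $L_{2,r,q}$ as defined, and in particular that the $U(1)=\{1\}\times U(1)$ quotient is compatible with these identifications. Everything else — existence of the tautological splittings, functoriality of $\PV_{2,q}(-)$ under direct sums, compatibility with conjugation — is routine and covered by Proposition \ref{PV properties} and Proposition \ref{proposition:G-vb}.
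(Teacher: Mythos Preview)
Your proposal is correct and follows essentially the same route as the paper: apply $\PV_{2,q}(\pi,-)$ to the tautological splitting $\gamma_2\oplus\gamma_2^\perp\cong\epsilon^{n+1}$ twisted by $(\tfrac{r+q}2,1)$, then identify $\PV_{2,q}(\pi,\gamma_2(\tfrac{r+q}2,1))$ with $L_{1,r,q}\oplus L_{2,r,q}$ via the $\TT^2$-equivariant isomorphism $\pi^*\gamma_2\cong\pi_1^*\gamma_1\oplus\pi_2^*\gamma_1$ (your sub-bundles $\CC u$ and $\CC v$ are precisely $\pi_1^*\gamma_1$ and $\pi_2^*\gamma_1$). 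The paper handles the conjugate case by the remark ``the proof of the second is similar,'' just as you do.
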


\begin{proof}
We give the proof of the first isomorphism. The proof of the second is similar. Put $m=\frac {r+q} 2$.
By Propositions \ref{PV properties} we have
\begin{align*}
\PV_{2,q} (\pi , \gamma_2^\perp (m,1))\oplus \PV_{2,q} (\pi , \gamma_2 (m,1)) & \cong
\PV_2(\pi , \gamma_2^\perp (m,1) \oplus \gamma_2 (m,1)) \\
& \cong \PV_2 (\pi , \epsilon^{n+1} (m,1) ) \cong L_{0,r,q}^{\oplus (n+1)}. 
\end{align*}
Thus it suffices to show that $\PV_{2,q} (\pi , \gamma_2 (m,1))\cong L_{1,r,q} \oplus L_{2,r,q}$.
The standard isomorphism
\[
\pi_1^* (\gamma_1) \oplus \pi_2^* (\gamma_1) \to \pi^* (\gamma_2); \quad 
((u,v,w_1), (u,v,w_2)) \to (u,v,w_1+w_2),
\]
where $w_1\in \linspan_\CC (u)$ and $w_2\in \linspan_\CC (v)$, is $\TT^2$-equivariant with respect to
our actions. The result follows by Proposition \ref{proposition:G-vb} (2).
\end{proof}

We will now give pullback descriptions of the $\TT$-line bundles. 
The following notation is used: For a complex vector bundle 
$\xi \to X$ and integer $m\in \ZZ$ we put 
$\xi (m) = \xi$ where $z\in \TT \subseteq \CC$ acts on each fiber
by multiplication with $z^m$. Thus, $\xi (m) \to X$ is a $\TT$-vector
bundle over a trivial $\TT$-space.

\begin{proposition} \label{lbpb}
Let $\epsilon^1 \to \CP^n$ be the trivial line bundle and $\gamma_1\to \CP^n$ the canonical line bundle. 
There are pullback diagrams of $\TT$-vector bundles as follows for $r=q$ mod $2$ and $i=1,2$:
\[
\xymatrix@C=1.0 cm{
L_{i,r,q} \ar[r] \ar[d]
& \epsilon^1 (\frac {r+(-1)^{i+1}q} 2)  \ar[d] 
& L_{0,r,q} \ar[r] \ar[d] 
& \overline \gamma_1 {(\frac {r+(-1)^{i+1}q} 2)} \ar[d] \\
\PSt 2 {n+1} q \ar[r]^-{pr_i} & \CP^n
& \PSt 2 {n+1} q \ar[r]^-{pr_i} & \CP^n 
}\]
\[
\xymatrix@C=1.0 cm{
\overline{L}_{i,r,q} \ar[r] \ar[d]
& \overline{\epsilon}^1 (\frac {r+(-1)^{i}q} 2)  \ar[d] 
& \overline{L}_{0,r,q} \ar[r] \ar[d] 
& \gamma_1 {(\frac {r+(-1)^{i}q} 2)} \ar[d] \\
\PSt 2 {n+1} q \ar[r]^-{pr_i} & \CP^n
& \PSt 2 {n+1} q \ar[r]^-{pr_i} & \CP^n 
}\]
\end{proposition}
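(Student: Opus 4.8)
The plan is to unwind the definition $\PV_{2,q}(f,\xi) = f^*(\xi)/U(1)$ for each of the six line bundles in the statement and to identify the resulting $U(1)$-quotient with the pullback along $pr_i : \PSt 2 {n+1} q \to \CP^n$ of a trivial, resp.\ tautological, line bundle carrying a suitable $\TT$-weight. Throughout I would use the explicit $\TT^2$-action on $\Sta 2 {n+1} q$ recorded in the proof of Proposition \ref{PV properties}(2) — the base action being $(z_1,z_2)*(u,v) = (z_2u, z_1^q z_2 v)$, with $U(1) = \{1\}\times U(1)$ acting diagonally on the frame — so that the residual $\TT \cong \TT^2/U(1)$ acts by $z*[u,v] = [u,z^q v] = [z^{-q}u,v]$, as in the Remark following the definition of $\PSt 2 {n+1} q$.

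For $L_{i,r,q} = \PV_{2,q}(\pi_i, \gamma_1(\tfrac{r+q}{2},1))$ the key point is that $\pi_i^*(\gamma_1)$ is canonically trivial over $\Sta 2 {n+1} q$, since the $i$-th frame vector is a unit vector spanning the line $\pi_i(u,v)$: one has $\pi_1^*(\gamma_1) \cong \Sta 2 {n+1} q \times \CC$ via $((u,v),w) \mapsto ((u,v), h(w,u))$, and similarly with $v$ for $i=2$. First I would read off how the $\TT^2$-action transforms in this trivialisation; because the spanning vector is itself rotated by the $U(1)$-factor and $h$ is conjugate-linear in its second argument, the $U(1)$-dependence of the fibre coordinate cancels — for $i=2$ only after absorbing the extra $z_1^q$ in front of $v$ — leaving a pure power of $z_1$, equal to $\tfrac{r+q}{2}$ for $i=1$ and $\tfrac{r+q}{2} - q = \tfrac{r-q}{2}$ for $i=2$, i.e.\ $\tfrac{r+(-1)^{i+1}q}{2}$. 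Passing to the $U(1)$-quotient then produces exactly $\PSt 2 {n+1} q \times \CC(\tfrac{r+(-1)^{i+1}q}{2}) = pr_i^*(\epsilon^1(\tfrac{r+(-1)^{i+1}q}{2}))$, which is the asserted square.

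For $L_{0,r,q} = \PV_{2,q}(\epsilon^1(\tfrac{r+q}{2},1))$ I would first invoke Proposition \ref{PV properties}(2) to rewrite this as $\PV_{2,q}(\pi_i, \epsilon^1_{\CP^n}(\tfrac{r+q}{2},1))$; here $\pi_i^*(\epsilon^1_{\CP^n})$ is genuinely trivial, so no cancellation of the $U(1)$-factor occurs. The $U(1)$-equivariant map $\Sta 2 {n+1} q \to S^{2n+1}$ selecting the $i$-th frame vector covers $pr_i$ through the Hopf map and identifies $(\Sta 2 {n+1} q \times \CC)/U(1)$ with the $pr_i$-pullback of the associated line bundle $(S^{2n+1}\times\CC)/U(1)$; a direct check of equivalence classes (confirmed by a $c_1$-computation) shows that this associated bundle is the conjugate tautological bundle $\overline\gamma_1$, not $\gamma_1$, and carrying the surviving $z_1$-weight along gives $L_{0,r,q} \cong pr_i^*(\overline\gamma_1(\tfrac{r+(-1)^{i+1}q}{2}))$. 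The conjugate bundles $\overline L_{i,r,q}$ and $\overline L_{0,r,q}$ are handled identically after replacing $\gamma_1,\epsilon^1$ by $\overline\gamma_1,\overline\epsilon^1$ and the weight $(\tfrac{r+q}{2},1)$ by $(\tfrac{r-q}{2},-1)$; the extra conjugation reverses the roles above, which is exactly what turns $\epsilon^1$ into $\gamma_1$ in the targets and $(-1)^{i+1}$ into $(-1)^i$ in the exponents.

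The verifications that the trivialisations are bundle isomorphisms, that the induced maps to the $U(1)$-quotients are well-defined, and that they intertwine the $\TT$-actions are routine. The hard part will be purely the bookkeeping of conjugation conventions — left- versus right-actions, the slot of $h$ in which it is $\CC$-antilinear, whether $(S^{2n+1}\times\CC)/U(1)$ is $\gamma_1$ or $\overline\gamma_1$, and the sign of the surviving weight — which I would pin down once by always passing to the representative with trivial $U(1)$-component (taking $z_2 = (\sqrt{z_1})^{-q}$ when a square root is unavoidable), so that the exponents come out as $\tfrac{r\pm q}{2}$ with signs matching $(-1)^{i+1}$ and $(-1)^i$ as stated.
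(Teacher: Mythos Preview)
Your proposal is correct and follows essentially the same route as the paper: the paper writes down the explicit bundle maps $f_i([u,v,w]) = ([u_i], k(w,u_i))$ for $L_{i,r,q}$ (your $h(w,u_i)$ is exactly the scalar $k(w,u_i)$) and $g_i([u,v,k]) = ([u_i], \overline k\, u_i)$ for $L_{0,r,q}$, and then checks well-definedness and $\TT$-equivariance by the same weight-tracking you outline. Your anticipation that the sign bookkeeping (and in particular that $\overline\gamma_1$ rather than $\gamma_1$ appears for $L_{0,r,q}$) is the only delicate point is accurate, and the paper resolves it by the direct formula $[u,v,k]\mapsto \overline k\, u$ rather than via an abstract associated-bundle argument.
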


It might seems strange that eg. the bundle $L_{1,r,q}$ is a trivial $\TT$-vector 
bundle over $\PSt 2 {n+1} q $ as stated. It did not come from a trivial bundle but from $\gamma_1$.
The 'untwisting' appears when the quotient is formed in the construction of $L_{1,r,q}$ as a result of
the definition of the $U(1)$-action.

\begin{proof}
Regarding the upper left pullback diagram for $i=1$, 
the bundle map over $pr_1$ is defined by
\[ f_1: L_{1,r,q} \to \CP^n \times \CC ;\quad [u,v,w]\mapsto ([u],k(w,u)) , \]
where $k(w,u)\in \CC$ is the scalar determined by $w=k(w,u)u$. The following properties hold
for $w_1, w_2 \in \linspan_{\CC}(u)$, $a_1, a_2 \in \CC$ and $b\in U(1)$:
\begin{align*} 
& k(a_1w_1+a_2w_2,u)=a_1k(w_1,u)+a_2k(w_2,u) ,\\
& k(w,bu) =b^{-1} k(w,u) .
\end{align*}
It follows that $k(zw,zu)=k(w,u)$ for $z\in U(1)$ so the bundle map $f_1$ is well-defined:
\[ [zu,zv,zw]\mapsto ([zu],k(zw,zu))=([u],k(w,u)). \]
Furthermore, $f_1$ is a fiber-wise $\CC$-linear isomorphism, so we have a pullback. We check that
$f_1$ is $\TT$-equivariant as well: Put $m=\frac {r+q} 2$. Then,
\[ f_1(z*[u,v,w])=f_1([u,z^qv,z^mw])=([u], k(z^mw,u))
= ([u],z^m k(w,u)). \]
Similarly, the bundle map 
$f_2: L_{2,r,q} \to \CP^n \times \CC$; $[u,v,w]\mapsto ([v],k(w,v))$,
where $w=k(w,v)v$, gives us the upper left pullback diagram for $i=2$. In this case, the $\TT$-equivariance follows 
from the computation 
\[ f_2(z*[u,v,w])=f_2([u,z^qv,z^mw])=([z^qv], k(z^mw, z^qv))
= ([v],z^{m-q} k(w,u)). \]

The bundle maps in the lower left diagram are still $f_1$ and $f_2$, but with conjugate complex structure
on domain and target. For $i=1$, we have
\begin{align*} 
& f_1(z*[u,v,w])=f_1([u,z^qv,z^{m-q}\cdot w])=f_1([u,z^qv,z^{-m+q} w]) \\
& = ([u], k(z^{-m+q}w,u))
= ([u], z^{-m+q}k(w,u))=([u], z^{m-q}\cdot k(w,u)). 
\end{align*}
Thus, $f_1$ is $\TT$-equivariant. A similar argument gives us that $f_2$ is $\TT$-equivariant so we 
have the stated pullback diagrams for $i=1,2$.

The bundle map in the upper right diagram for $i=1$ is defined by
\[ g_1: L_0 \to \overline \gamma_1 ; \quad
[u,v,k] \mapsto ([u], k\cdot u)=([u], \overline k u). \]
It is well-defined because $z\overline z = 1$ for $z\in U(1)$ such that
\[ [zu,zv,zk] \mapsto ([zu], \overline z \overline k zu) = 
([u], \overline k u). \]
Since $g_1$ is a fiber-wise isomorphism, we have a pullback. $g_1$ is
also $\TT$-equivariant:
\[ g_1(z*[u,v,k])=g_1([u,z^qv,z^mk])=([u],z^{-m}\overline k u) = ([u], z^m \cdot \overline k u). \]
Similarly, the bundle map
$g_2: L_0 \to \overline \gamma_1$; 
$[u,v,k] \mapsto ([v], \overline k v)$
gives us the upper right pullback diagram for $i=2$.

The bundle maps $g_1$ and $g_2$ with conjugate complex structure on domain and target, gives the lower right
pullback diagrams.
\end{proof}

We are interested in the vector bundle $E\TT \times_\TT \mu_q^-$. Fortunately, forming Borel constructions of 
$G$-vector bundles is well behaved with respect to Whitney sums and pullbacks. One has the following standard results:
\begin{proposition} \label{Borel_vb_prop}
\label{orbit}
Let $G$ be a compact Lie group and let $\xi$, $\eta$ be 
$G$-vector bundles over a $G$-space $X$. Then there is a natural 
isomorphism
\[
\xymatrix@C=1.5 cm{
EG\times_G (\xi \oplus \eta ) \ar[r]^-{\cong} 
& (EG\times_G \xi )\oplus (EG\times_G \eta ).
}\] 
Furthermore, if $f:Y\to X$ is a $G$-map, then there is a natural 
isomorphism
\[
\xymatrix@C=1.5 cm{
EG\times_G f^* (\xi ) \ar[r]^-{\cong} 
& (EG\times_G f)^* (EG\times_G \xi ).
}\] 
\end{proposition}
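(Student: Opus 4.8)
The plan is to realize the Borel construction $EG\times_G(-)$ as an orbit space for a free $G$-action and then reduce both statements to Proposition \ref{proposition:G-vb}. I would fix the Milnor join model of $EG$, so that for every $G$-space $Z$ the projection $EG\times Z\to EG\times_G Z$ is a numerable principal $G$-bundle; this can be read off from the natural homeomorphism $EG\times Z\cong(EG\times_G Z)\times_{BG}EG$, which exhibits it as the pullback of $EG\to BG$ along the classifying map. Statements about vector bundles over the infinite-dimensional base $EG\times_G Z$ are understood in the usual colimit sense (restrict to finite skeleta of $BG$), which I will not belabor.

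For the Whitney sum, let $\mathrm{pr}:EG\times X\to X$ be the projection, a $G$-map when $EG\times X$ carries the diagonal action. Then $\mathrm{pr}^*\xi$ and $\mathrm{pr}^*\eta$ are $G$-vector bundles over $EG\times X$, and Proposition \ref{proposition:G-vb}(1), applied with the pair $(G,H)=(G,G)$ so that $G/H$ is trivial, says that $(\mathrm{pr}^*\xi)/G$ and $(\mathrm{pr}^*\eta)/G$ are genuine vector bundles over $EG\times_G X$; these are, by definition, $EG\times_G\xi$ and $EG\times_G\eta$. Since the canonical isomorphism $\mathrm{pr}^*(\xi\oplus\eta)\cong\mathrm{pr}^*\xi\oplus\mathrm{pr}^*\eta$ is $G$-equivariant, Proposition \ref{proposition:G-vb}(2) (again with $H=G$) gives
\[
EG\times_G(\xi\oplus\eta)=\bigl(\mathrm{pr}^*(\xi\oplus\eta)\bigr)/G\cong(\mathrm{pr}^*\xi)/G\oplus(\mathrm{pr}^*\eta)/G=(EG\times_G\xi)\oplus(EG\times_G\eta),
\]
and the required naturality in $(\xi,\eta)$ is inherited from that proposition.

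For the pullback statement, given a $G$-map $f:Y\to X$ I would set $F=\mathrm{id}_{EG}\times f:EG\times Y\to EG\times X$, which is $G$-equivariant and descends to $EG\times_G f$ on orbit spaces, and invoke the canonical $G$-equivariant isomorphism $F^*(\mathrm{pr}_X^*\xi)\cong\mathrm{pr}_Y^*(f^*\xi)$ over $EG\times Y$. Taking $G$-quotients turns the right-hand side into $EG\times_G f^*\xi$ and the left-hand side into $(EG\times_G f)^*(EG\times_G\xi)$, which is the claim. The step I expect to be the main obstacle is precisely this last identification — that forming the $G$-quotient commutes with pulling back along the induced map of orbit spaces — since it is not literally covered by Proposition \ref{proposition:G-vb}(3), which only combines pullbacks with Whitney sums. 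I would deduce it from the base-change property of associated bundles: by Proposition \ref{proposition:G-vb}(1) the square whose rows are $\mathrm{pr}_X^*\xi\to(\mathrm{pr}_X^*\xi)/G$ and $EG\times X\to EG\times_G X$ and whose columns are the bundle projections is a pullback square of vector bundles; pulling this square back along $EG\times_G f$ and using the natural homeomorphism $EG\times Y\cong(EG\times_G Y)\times_{EG\times_G X}(EG\times X)$ (both sides being pullbacks of $EG\to BG$) identifies $(F^*(\mathrm{pr}_X^*\xi))/G$ with $(EG\times_G f)^*((\mathrm{pr}_X^*\xi)/G)$; it then remains only to check that this identification is fibrewise $\CC$-linear and $\TT$-equivariant, which is routine. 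Alternatively, both isomorphisms of the proposition are standard and may simply be cited, e.g.\ from \cite{tomD1}.
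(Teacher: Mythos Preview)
The paper does not prove this proposition at all: it is introduced with the phrase ``One has the following standard results'' and stated without proof, so there is nothing to compare your argument against line by line. Your closing remark that both isomorphisms ``are standard and may simply be cited, e.g.\ from \cite{tomD1}'' is in fact exactly the paper's stance.

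That said, your proposed argument is correct and is the natural way to deduce the proposition from the machinery already set up in the paper. Applying Proposition \ref{proposition:G-vb} with $H=G$ to the free diagonal $G$-action on $EG\times X$ gives part (1) and the Whitney sum statement directly via parts (1) and (2). For the pullback statement you rightly observe that Proposition \ref{proposition:G-vb}(3) as stated does not quite cover what is needed; your fix via the pullback square in the proof of part (1) is the standard base-change argument for associated bundles and goes through without difficulty. The infinite-dimensional caveat about $EG$ is appropriate but, as you say, routine.
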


%

\begin{corollary}
\label{fiberpullback}
Let $G$ be a compact Lie group and $p: \xi \to X$ a $G$-vector bundle over a trivial $G$-space $X$. Write 
$EG\to BG$ for the universal principal $G$-bundle, and let $i_1:BG\to BG\times X$ be the inclusion
$b\mapsto (b,x_0)$ where $x_0\in X$. Then there is a pullback diagram
\[
\xymatrix@C=1.5 cm{
EG\times_G p^{-1}(x_0) \ar[d]^{pr_1} \ar[r]  & EG\times_G \xi \ar[d]^{EG\times_G p} \\
BG \ar[r]^{i_1} & BG\times X 
}\] 
\end{corollary}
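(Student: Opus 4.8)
The plan is to deduce this from Proposition \ref{Borel_vb_prop} by exhibiting $\xi \to X$ as a pullback along $i_1$ of an appropriate $G$-vector bundle over $X$, then applying the naturality of the Borel construction with respect to pullbacks. First I would observe that, since $X$ carries the trivial $G$-action, the inclusion $i_1 : BG \to BG \times X$ is (the Borel construction of) the $G$-map $j : \{x_0\} \hookrightarrow X$, where $\{x_0\}$ also carries the trivial $G$-action; indeed $EG \times_G \{x_0\} = BG$ and $EG\times_G X = BG\times X$ since the $G$-action on $X$ is trivial, and under these identifications $EG\times_G j = i_1$. Next I would note the evident pullback square of $G$-vector bundles over trivial $G$-spaces
\[
\xymatrix@C=1.5cm{
p^{-1}(x_0) \ar[d] \ar[r] & \xi \ar[d]^{p} \\
\{x_0\} \ar[r]^{j} & X,
}
\]
i.e. $p^{-1}(x_0) = j^*(\xi)$ as $G$-vector bundles, where the fiber $p^{-1}(x_0)$ is a $G$-module because the $G$-action on $X$ fixes $x_0$.

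Then I would apply the second half of Proposition \ref{Borel_vb_prop} to the $G$-map $j$ and the $G$-vector bundle $\xi$: this yields a natural isomorphism
\[
EG\times_G p^{-1}(x_0) = EG\times_G j^*(\xi) \;\cong\; (EG\times_G j)^*(EG\times_G \xi) = i_1^*(EG\times_G \xi),
\]
which is exactly the assertion that the square in the statement is a pullback, with the horizontal map $EG\times_G p^{-1}(x_0) \to EG\times_G \xi$ being the one induced by the inclusion $p^{-1}(x_0)\hookrightarrow \xi$ and the left vertical map being $pr_1 = EG\times_G(p^{-1}(x_0)\to\{x_0\})$.

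The only genuinely non-routine point is the bookkeeping identifying $EG\times_G j$ with $i_1$ under the canonical homeomorphisms $EG\times_G\{x_0\}\cong BG$ and $EG\times_G X\cong BG\times X$; once the trivial-action identifications are set up carefully this is immediate, and everything else is a direct invocation of Proposition \ref{Borel_vb_prop}. I do not anticipate any substantive obstacle.
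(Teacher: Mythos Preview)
Your proposal is correct and is exactly the intended deduction: the paper states this as a corollary with no proof, relying on the second half of Proposition \ref{Borel_vb_prop} applied to the $G$-map $j:\{x_0\}\hookrightarrow X$, together with the identifications $EG\times_G\{x_0\}\cong BG$ and $EG\times_G X\cong BG\times X$ that hold because the $G$-action on $X$ is trivial. There is nothing to add.
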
 


\section{Characteristic classes}
\label{sec:characteristic}

In this section we compute the Chern classes of the vector bundles $(\mu_q^-)_{h\TT}$.
By Theorem \ref{sumiso} and Proposition \ref{lbpb} the following result is relevant:

 \begin{proposition} \label{BasicChern}
Let $x=c_1 (\gamma_1 )$ and $u=c_1 (\gamma_1^\infty )$ be the first Chern classes of the canonical line bundles
$\gamma_1 \to \CP^n$ and $\gamma_1^\infty \to \CP^\infty = B\TT$ such that
\[ H^*(B\TT \times \CP^n ; \ZZ  ) = \ZZ [u] \otimes \ZZ [x]/(x^{n+1}) .\]
Let $\epsilon^1 \to \CP^n$ be the trivial line bundle. Then for every $m\in \ZZ$ we have
\begin{align*}
c_1 (E\TT \times_\TT \gamma_1 {(m)} ) &= mu\otimes 1+1\otimes x, \\
c_1 (E\TT \times_\TT \epsilon^1 {(m)}) ) &= mu\otimes 1.
\end{align*}
\end{proposition}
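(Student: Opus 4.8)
The plan is to use that $\gamma_1(m)$ and $\epsilon^1(m)$ are $\TT$-line bundles over $\CP^n$ with \emph{trivial} $\TT$-action on the base, so that their Borel constructions are genuine complex line bundles over $E\TT\times_\TT\CP^n=B\TT\times\CP^n$. Since $H^2(B\TT\times\CP^n;\ZZ)=\ZZ u\oplus\ZZ x$, each first Chern class has the form $au\otimes 1+b\otimes x$ for integers $a,b$, and the whole problem is to determine these coefficients; a line bundle is determined by its $c_1$, so no further identification is needed once $a,b$ are known.

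First I would dispose of $\epsilon^1(m)$. It is the pullback $t^*\CC(m)$ along the collapse map $t\colon\CP^n\to\ast$, so by the compatibility of the Borel construction with pullbacks (Proposition~\ref{Borel_vb_prop}) one has $E\TT\times_\TT\epsilon^1(m)\cong pr_1^*\big(E\TT\times_\TT\CC(m)\big)$, where $pr_1\colon B\TT\times\CP^n\to B\TT$. The bundle $E\TT\times_\TT\CC(m)$ over $B\TT$ is (with the sign normalization that fixes $u$) the $m$-th tensor power of $\gamma_1^\infty$, hence has first Chern class $mu$; pulling back yields $c_1(E\TT\times_\TT\epsilon^1(m))=mu\otimes 1$. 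In particular its $x$-coefficient is $0$, as claimed.

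For $\gamma_1(m)$ I would read off $a$ and $b$ from two restrictions of the bundle $E\TT\times_\TT\gamma_1(m)$ over $B\TT\times\CP^n$. Restricting to a fibre: Corollary~\ref{fiberpullback} with $G=\TT$ and $\xi=\gamma_1(m)$ identifies $i_1^*\big(E\TT\times_\TT\gamma_1(m)\big)$ with $E\TT\times_\TT\big(\gamma_1(m)|_{x_0}\big)=E\TT\times_\TT\CC(m)$, since the fibre of $\gamma_1$ over a point is a complex line on which $\TT$ acts with weight~$m$. Because $i_1^*(u\otimes 1)=u$ and $i_1^*(1\otimes x)=0$, this forces $a=m$. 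Restricting to a slice: pulling $E\TT\times_\TT\gamma_1(m)$ back along the quotient map $E\TT\times\CP^n\to B\TT\times\CP^n$ recovers the product bundle $E\TT\times\gamma_1\cong pr_{\CP^n}^*\gamma_1$ (the weight-$m$ twist lives only in the $\TT$-action that is quotiented out), and its further restriction along $y\mapsto(e_0,y)$ is $\gamma_1$ itself; since this composite is the inclusion of a slice $\{b_0\}\times\CP^n\hookrightarrow B\TT\times\CP^n$, on which $u$ restricts to $0$ and $x$ to $x$, this forces $b=1$. Hence $c_1(E\TT\times_\TT\gamma_1(m))=mu\otimes 1+1\otimes x$. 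A shorter variant would be to note $\gamma_1(m)\cong\gamma_1(0)\otimes\epsilon^1(m)$ and that $E\TT\times_\TT(-)$ carries tensor products to tensor products, with $E\TT\times_\TT\gamma_1(0)=pr_{\CP^n}^*\gamma_1$; but this uses a compatibility not literally among the stated results, whereas the two-restriction argument uses only Proposition~\ref{Borel_vb_prop} and Corollary~\ref{fiberpullback}.

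I expect the only real friction to be convention-chasing rather than mathematics: one must check that the orientation conventions make $E\TT\times_\TT\CC(1)$ have $c_1=+u$ (and not $-u$), and, for the tensor-product variant, justify that the Borel construction commutes with $\otimes$. The two-restriction route above is the safe one, since it invokes only results already proved in the paper.
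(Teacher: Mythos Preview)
Your proposal is correct and follows essentially the same route as the paper: both determine the two coefficients of $c_1$ by restricting along the inclusions $i_1\colon B\TT\hookrightarrow B\TT\times\CP^n$ (using Corollary~\ref{fiberpullback}) and $i_2\colon\CP^n\hookrightarrow B\TT\times\CP^n$ (your ``slice''), and both reduce to the lemma $c_1(E\TT\times_\TT\CC(m))=mu$. The only substantive difference is that the paper actually carries out the convention check you flag at the end, by writing down the explicit bundle isomorphism $S^{2n-1}\times_\TT\CC(1)\to\gamma_1$, $[v,z]\mapsto(\linspan_\CC(v),zv)$, and then arguing via $\CC(k)\cong\CC(1)^{\otimes k}$; your shortcut for $\epsilon^1(m)$ via $pr_1^*$ is a mild streamlining of what the paper does with the same two restrictions.
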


\begin{proof}
We start by proving the following claim:
\[c_1 (E\TT \times_{\TT} \CC (m)) = mu. \]
The first Chern class defines a group homomorphism
\[ c_1: (\text{Vect}^1_\CC (B\TT ) ,\otimes , \overline{( \, )} )  \to (H^2(B\TT ; \ZZ ),+, -)\] 
which is in fact an isomorphism since $B\TT$ is homotopy equivalent to the CW-complex $\CP^\infty$ 
(see \cite[page 250]{Husemoller} or \cite{Hatcher}). There are isomorphisms of 
vector bundles for every $n$ as follows:
 \begin{align*}
& S^{2n-1}\times_{\TT} \CC (1) \to \gamma_1 ; & [v,z]&\mapsto (\linspan_\CC (v), zv), \\
& S^{2n-1}\times_{\TT} \CC (-1) \to \overline \gamma_1 ; &  [v,z]&\mapsto (\linspan_\CC (v), \overline z v). 
\end{align*}
Thus, we have isomorphisms $E\TT \times_\TT \CC (1) \cong \gamma_1$ and 
$E\TT \times_\TT \CC (-1) \cong \overline \gamma_1$.
Note that $\CC (0)$ equals $\CC$ with trivial $\TT$-action and for $k>0$, we have that
$\CC (k) \cong \otimes_{i=1}^k \CC (1)$ and 
$\CC (-k)\cong \otimes_{i=1}^k \CC (-1)$. 
We get corresponding tensor product decompositions of the vector bundles 
$E\TT \times_\TT \CC (m)$. The claim follows.
 
Choose base points in $B\TT$ and $\CP^n$, and consider the associated inclusions
\[i_1 : B\TT \to B\TT \times \CP^n , \quad i_2 : \CP^n \to B\TT \times \CP^n .\]
By Corollary \ref{fiberpullback} the pullback of both $\gamma_1 (m)_{h\TT }$ and 
$\epsilon^1 (m)_{h\TT }$ along $i_1$ equals the line bundle $E\TT \times_\TT \CC (m) $. Thus, 
\[ i_1^*(c_1({\gamma_1 (m)}_{h\TT }))=i_1^*(c_1({(\epsilon^1 (m))}_{h\TT }))=
c_1(E\TT \times_\TT \CC (m)) = mu. \]
The pullback of ${\gamma_1 (m)}_{h\TT }$ along 
$i_2: \CP^n \to E\TT \times \CP^n \to  B\TT \times \CP^n$ equals $\gamma_1$ and the pullback 
of $\epsilon^1(m)_{h\TT }$ along $i_2$ is the trivial line bundle $\epsilon^1$. Thus, 
\[ i_2^*(c_1({\gamma_1 (m)}_{h\TT }))=x, \quad i_2^*(c_1(\epsilon^1(m)_{h\TT }))=0. \]

Finally,  $H^2 (B\TT  \times \CP^n ; \ZZ )$ is generated by the two classes $u\otimes 1$, $1\otimes x$ and 
\begin{align*}
 i_1^*(u\otimes 1) = i_1^* \circ pr_1^* (u) = u & , &  i_2^* (u\otimes 1) = i_2^* \circ pr_1^* (u)=0 , \\
i_1^*(1\otimes x)=i_1^* \circ pr_2^* (x) = 0 & , &  i_2^* (1\otimes x) =i_2^* \circ pr_2^* (x)=x,
\end{align*}
so we have the desired result. 
\end{proof}

\begin{remark} \label{BasicChern1}
For any complex vector bundle $\xi$ one has that
\[ \overline{E\TT \times_\TT \xi (m)} =  E\TT \times_\TT \overline \xi (-m), \]
since in both cases, we mod out by the equivalence relation $(ez,v)\sim (e,z^mv)$, and we have 
the conjugate complex structure. So by the above result
\begin{align*}
c_1 (E\TT \times_\TT \overline \gamma_1 {(m)} ) &= mu\otimes 1-1\otimes x, \\
c_1 (E\TT \times_\TT \overline \epsilon^1 {(m)}) ) &= mu\otimes 1.
\end{align*}
\end{remark}

In order to use the pullback diagrams of Proposition \ref{lbpb}, we must compute the induced maps in cohomology of 
the two projection maps
\[ (pr_i)_{h\TT} : {\PSt 2 {n+1} q}_{h\TT} \to ( \CP^n )_{h\TT} = B\TT \times \CP^n, \quad i=1,2. \]
The mod $p$ cohomology of the domain space was computed in \cite{BO5}. We will need some of the 
results, leading to this calculation.

Let $\pi : \PP (\gamma_2 ) \to \Gr 2 {n+1}$ denote the projective bundle of the canonical bundle 
$\gamma_2 \to \Gr 2 {n+1}$. 
We can describe the total space as a set of flags:
\[ \PP (\gamma_2 ) = \{ V_1 \subseteq V_2 \subseteq \CC^{n+1} | \dim_\CC (V_i) = i \} .\]
By \cite{BO5} Lemma 2.6, we have an isomorphism
\[
\xymatrix@C=1.0 cm{
\psi: \PSt 2 {n+1} 1 /\TT \ar[r]^-{\cong}  &  \PP (\gamma_2 ); \quad [u,v]\TT \mapsto (\linspan_\CC (u) \subseteq \linspan_\CC (u,v) \subseteq \CC^{n+1}) . 
}\] 
There is a canonical line bundle $\lambda \to \PP (\gamma_2 )$ with complement line bundle
$\lambda^\prime \to \PP (\gamma_2 )$ as follows:
\[ \lambda = \{ (V_1\subseteq V_2, v) | v\in V_1 \} , \quad
\lambda^\prime = \{ (V_1\subseteq V_2, w) | w\in V_1^\perp \subseteq V_2 \} .\]
There are pullback diagrams
\[
\xymatrix@C=1.5 cm{
\lambda \ar[r] \ar[d] & \gamma_1  \ar[d] & \lambda^\prime \ar[r] \ar[d] & \gamma_1 \ar[d] \\
\PP (\gamma_2 ) \ar[r]^-{p_1} & \CP^n & \PP (\gamma_2) \ar[r]^-{p_2} & \CP^n , 
}\]
where $p_1(V_1 \subseteq V_2 ) = V_1$ and $p_2(V_1\subseteq V_2)=V_1^\perp$.
Note also that $\lambda \oplus \lambda^\prime \cong \pi^* (\gamma_2 )$. We have the following slightly enhanced
version of Theorem 3.2 in \cite{BO5}:

\begin{theorem} \label{Q_n}
There is an isomorphism of graded rings
\[ H^* (\PP (\gamma_2 );\ZZ ) \cong \ZZ [x_1, x_2]/(Q_n, Q_{n+1}), \]
where $x_1$ and $x_2$ have degree $2$ and for positive integers $k$, 
\[ Q_k (x_1, x_2) = \sum_{i=1}^k x_1^ix_2^{k-i} = \frac {x_1^{k+1}-x_2^{k+1}} {x_1-x_2} . \]
Furthermore, $p_1^* (x) = x_1$ and $p_2^* (x) = x_2$.
\end{theorem}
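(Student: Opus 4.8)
The plan is to realise $\PP(\gamma_2)$ as an honest projective bundle over $\CP^n$ and then run the projective bundle (Leray--Hirsch) theorem, reading the two pullback formulas off the construction. The projection $p_1$ identifies $\PP(\gamma_2)$ with $\PP(\gamma_1^\perp)$, the projectivisation of the rank-$n$ orthocomplement bundle $\gamma_1^\perp\to\CP^n$ of the canonical line bundle: a flag $V_1\subseteq V_2$ is the same datum as the line $V_1\in\CP^n$ together with the line $V_2\cap V_1^\perp\subseteq V_1^\perp$. Under this identification the tautological sub-line bundle of $p_1^*\gamma_1^\perp$ is exactly $\lambda'$, while $\lambda=p_1^*\gamma_1$ and $\lambda'=p_2^*\gamma_1$ by the pullback diagrams already recorded above. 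Accordingly I would set $x_1:=c_1(\lambda)=p_1^*x$ and $x_2:=c_1(\lambda')=p_2^*x$, which will be the two polynomial generators, so that the asserted pullback statements are built in from the start.

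Next I would apply the projective bundle theorem. All cohomologies in sight ($\CP^n$ and its $\CP^{n-1}$-fibres, hence $\PP(\gamma_2)$) are torsion free and concentrated in even degrees, so Leray--Hirsch works over $\ZZ$: the classes $1,x_2,\dots,x_2^{n-1}$ form an $H^*(\CP^n)$-basis of $H^*(\PP(\gamma_2))$, and the ring is $H^*(\CP^n)[x_2]$ modulo the single relation $c_n(p_1^*\gamma_1^\perp/\lambda')=0$, which is monic of degree $n$ in $x_2$ because the quotient has rank $n-1$. To evaluate it I would use $c(\gamma_1^\perp)=c(\gamma_1)^{-1}=1-x+x^2-\cdots+(-1)^n x^n$, a genuine polynomial since $x^{n+1}=0$, whence $p_1^*c_i(\gamma_1^\perp)=(-1)^i x_1^i$; the relation then becomes the degree-$2n$ part of $\big(\sum_i(-1)^i x_1^i\big)(1+x_2)^{-1}$, namely $(-1)^n\sum_{i=0}^n x_1^i x_2^{n-i}=(-1)^n Q_n(x_1,x_2)$. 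This yields $H^*(\PP(\gamma_2);\ZZ)\cong\ZZ[x_1,x_2]/(x_1^{n+1},Q_n)$ with $p_1^*x=x_1$ and $p_2^*x=x_2$.

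Finally I would match this with the stated presentation via the elementary identity $Q_{n+1}=x_2 Q_n+x_1^{n+1}$, immediate from $Q_k=(x_1^{k+1}-x_2^{k+1})/(x_1-x_2)$, which shows $(x_1^{n+1},Q_n)=(Q_n,Q_{n+1})$ as ideals and so gives the claimed form. One could equally run the argument through $\pi\colon\PP(\gamma_2)\to\Gr 2 {n+1}$ as a $\CP^1$-bundle together with the standard presentation $H^*(\Gr 2 {n+1})=\ZZ[c_1,c_2]/(\bar c_n,\bar c_{n+1})$, using $\lambda\oplus\lambda'\cong\pi^*\gamma_2$ to replace $c_i(\gamma_2)$ by $e_i(x_1,x_2)$ and $\bar c_j$ by $(-1)^j Q_j$; this is essentially Theorem 3.2 of \cite{BO5}, of which the present statement is the advertised enhancement recording $p_i^*x=x_i$.

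The only real obstacle is convention bookkeeping rather than mathematical content: one must fix the duality and sign conventions so that the abstract polynomial generators come out equal to $p_1^*x$ and $p_2^*x$ on the nose, and one must check that the identification $\PP(\gamma_2)\cong\PP(\gamma_1^\perp)$ carries $\lambda'$ to the tautological sub-bundle. Once that is nailed down, the projective bundle formula and the identity $Q_{n+1}=x_2 Q_n+x_1^{n+1}$ do all the work.
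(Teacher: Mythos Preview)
Your argument is correct and takes a genuinely different route from the paper. The paper quotes \cite{BO5} Theorem~3.2, which computes the ring via the $\CP^1$-bundle $\pi:\PP(\gamma_2)\to\Gr 2 {n+1}$ and defines the generators as $x_1=c_1(\lambda)$ and $x_2=\pi^*c_1(\gamma_2)-c_1(\lambda)$; the new content added here is the short computation $x_1+x_2=c_1(\pi^*\gamma_2)=c_1(\lambda)+c_1(\lambda')$, giving $x_2=c_1(\lambda')$, after which the already-recorded pullbacks $\lambda=p_1^*\gamma_1$ and $\lambda'=p_2^*\gamma_1$ yield $p_i^*x=x_i$. You instead use the other projective bundle structure, $p_1:\PP(\gamma_2)\cong\PP(\gamma_1^\perp)\to\CP^n$, and \emph{define} $x_i=p_i^*x$ from the outset, so the pullback identities are tautological; the work then goes into deriving the relations and matching the ideal $(x_1^{n+1},Q_n)$ with $(Q_n,Q_{n+1})$ via $Q_{n+1}=x_2Q_n+x_1^{n+1}$.

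What each buys: your approach is self-contained (no need for $H^*(\Gr 2 {n+1})$) and makes the enhancement $p_i^*x=x_i$ immediate, at the cost of computing the Chern classes of $\gamma_1^\perp$ and doing the ideal comparison. The paper's approach leverages the existing reference and only has to check one Chern-class identity to pin down $x_2$. Your closing remark correctly identifies the Grassmannian route as the \cite{BO5} argument. One minor phrasing issue: saying the relation is ``$c_n(p_1^*\gamma_1^\perp/\lambda')=0$'' is slightly informal, since for a rank-$(n-1)$ bundle this is trivially zero; what you mean, and what your computation actually does, is that the degree-$2n$ term of the formal expansion $c(p_1^*\gamma_1^\perp)\cdot c(\lambda')^{-1}$ yields the defining relation $(-1)^nQ_n=0$.
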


\begin{proof}
The ring structure is given in Theorem 3.2 of \cite{BO5}. From the proof of this theorem one has that
\[ x_1=c_1(\lambda ),\quad x_2=\pi^* (c_1(\gamma_2 ))-c_1(\lambda ), \quad 
\pi^* (c_1(\gamma_2 ))=x_1+x_2. \]
Thus, $p_1^*(x)=p_1^*(c_1(\gamma_1 ))=c_1(p_1^*(\gamma_1 ))= c_1(\lambda ) = x_1$ and
\[x_1+x_2=c_1(\pi^* (\gamma_2 ))=c_1(\lambda \oplus \lambda^\prime )=c_1(\lambda )+c_1(\lambda^\prime )=
x_1+c_1(\lambda^\prime )\]
such that $x_2=c_1(\lambda^\prime )$.
\end{proof}

Recall that a left $G$-space $X$ is also a right $G$ space with action $x*g=g^{-1}*x$ for $x \in X$, $g\in G$.
For the right $\TT$-space $\PSt 2 {n+1} 1$ we have the following result:
\begin{lemma} \label{Euler}
The principal $\TT$-bundle $\rho : \PSt 2 {n+1} 1 \to \PSt 2 {n+1} 1 /\TT$ has associated complex line bundle
$\lambda \otimes_\CC \overline{\lambda^\prime}$. That is, we have an isomorphism of line bundles
\[ \xymatrix@C=1.5 cm{
\PSt 2 {n+1} 1 \times_\TT \CC (1) \ar[d] \ar[r]^-{\cong} & \lambda \otimes_\CC \overline{\lambda^\prime} \ar[d] \\
\PSt 2 {n+1} 1 /\TT \ar[r]^-{\psi}_-{\cong} & \PP (\gamma_2 )
} \]
The Euler class of $\rho$ is
\[ e(\rho ) = x_1 - x_2. \]
\end{lemma}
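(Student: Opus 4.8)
The plan is to identify the line bundle associated to the principal $\TT$-bundle $\rho$ by unwinding the definitions of the various spaces and actions, and then to read off its Euler (= first Chern) class using the splitting $\pi^*(\gamma_2) \cong \lambda \oplus \lambda'$ together with Theorem \ref{Q_n}. First I would recall that for a principal $\TT$-bundle $P \to B$, the associated complex line bundle is $P \times_\TT \CC(1)$, and its Euler class equals its first Chern class; so the second assertion follows from the first once we know $c_1(\lambda \otimes_\CC \overline{\lambda'}) = c_1(\lambda) - c_1(\lambda') = x_1 - x_2$ by Theorem \ref{Q_n} (using $c_1$ is additive under tensor product of line bundles and sends conjugation to negation).

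The heart of the argument is constructing the displayed isomorphism $\PSt 2 {n+1} 1 \times_\TT \CC(1) \xrightarrow{\cong} \lambda \otimes_\CC \overline{\lambda'}$ over $\psi$. I would write a point of the total space on the left as $[[u,v], \zeta]$ with $(u,v) \in \St 2 {n+1}$ and $\zeta \in \CC$, subject to the identifications coming from (a) the $diag_2(U(1))$-quotient defining $\PStnull 2 {n+1}$, (b) the residual $\TT$-action $z * [u,v] = [z^{-1}u, zv]$ for $q=1$ (so the right action is $[u,v]*z = [zu, z^{-1}v]$), and (c) the Borel identification $([u,v]*z, \zeta) \sim ([u,v], z\zeta)$. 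Under $\psi$, the point $[u,v]\TT$ maps to the flag $\linspan_\CC(u) \subseteq \linspan_\CC(u,v)$; the fiber of $\lambda$ there is $\linspan_\CC(u)$ and the fiber of $\lambda'$ is $\linspan_\CC(u)^\perp \cap \linspan_\CC(u,v) = \linspan_\CC(v)$, so the fiber of $\lambda \otimes_\CC \overline{\lambda'}$ is $\linspan_\CC(u) \otimes_\CC \overline{\linspan_\CC(v)}$. The natural candidate map sends $[[u,v],\zeta] \mapsto \zeta\, (u \otimes \overline v)$ (interpreting $u \otimes \overline v$ as a basis vector of the tensor-product line). I would then check: well-definedness under the $diag_2(U(1))$-identification $(u,v) \sim (\lambda u, \lambda v)$ — here $u \otimes \overline v \mapsto \lambda u \otimes \overline{\lambda v} = \lambda \bar\lambda\, (u \otimes \overline v) = u \otimes \overline v$ since $|\lambda| = 1$, which is exactly why the conjugate on $\lambda'$ is needed; and compatibility with the Borel relation — replacing $(u,v)$ by $(zu, z^{-1}v)$ and $\zeta$ by $z^{-1}\zeta$ (or the appropriate signs, to be pinned down carefully) should leave $\zeta(u \otimes \overline v)$ unchanged, since $zu \otimes \overline{z^{-1}v} = z \cdot \bar z^{-1}(u \otimes \overline v) = z^2 (u\otimes\overline v)$, so one wants the scalar to absorb this — I expect the precise exponent bookkeeping (whether the $\TT$-action that matters is $z$ or $z^2$, cf. the square roots appearing throughout Section \ref{sec:geod}) to be exactly the delicate point. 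Fiberwise the map is manifestly a $\CC$-linear isomorphism, so once well-definedness and $\TT$/quotient-compatibility are verified it is an isomorphism of line bundles over $\psi$.

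The main obstacle is thus the bookkeeping of the circle actions and square roots: one must be careful about which $\TT$ is acting (the $\TT^2 = \TT \times U(1)$ with $\iota_q$, versus the residual quotient $\TT \cong \TT^2/U(1)$), about the distinction between the left action $z*[u,v] = [z^{-1}u, zv] = [u, z v]$ and the right action used to form $P \times_\TT \CC(1)$, and about the normalization $\CC(1)$ versus $\CC(2)$ hidden in the $(\sqrt{\,\cdot\,})$ conventions of Theorem \ref{thm:diffeo}. Getting the sign/exponent consistent so that the identification on the left-hand total space matches precisely the tensor-product twist $\lambda \otimes_\CC \overline{\lambda'}$ (and not, say, $\lambda \otimes \overline{\lambda'}$ with a doubled character or $\lambda \otimes (\lambda')^{-1}$ with the wrong complex structure) is where the real care is required; everything else is formal. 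Once the line-bundle isomorphism is in place, the Euler class computation $e(\rho) = c_1(\lambda) - c_1(\lambda') = x_1 - x_2$ is immediate from Theorem \ref{Q_n}.
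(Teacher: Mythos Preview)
Your approach is correct and essentially identical to the paper's: define the bundle map $[[u,v],k]\mapsto k\,(u\otimes v)$ into $\lambda\otimes_\CC\overline{\lambda'}$, check well-definedness under the $diag_2(U(1))$-quotient and the $\TT$-Borel identification, and then read off $e(\rho)=c_1(\lambda)-c_1(\lambda')=x_1-x_2$ via Theorem~\ref{Q_n}.

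The $z$-versus-$z^2$ issue you flag is not intrinsic; it comes from a slip in your description of the $\TT$-action. For $q=1$ the left action is $z*[u,v]=[z^{-1}u,v]=[u,zv]$ (not $[z^{-1}u,zv]$, which is a different class), so the associated right action is $[u,v]*z=[zu,v]$. With this representative the Borel check is immediate: $([zu,v],k)\sim([u,v],zk)$ maps on both sides to $k\,(zu\otimes v)=zk\,(u\otimes v)$ in $\lambda\otimes_\CC\overline{\lambda'}$, with no square roots entering at all. Once you use the correct representative the ``delicate bookkeeping'' disappears and the proof is exactly the two-line verification the paper gives.
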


\begin{proof}
The bundle map over the isomorphism $\psi$ is defined by
\[ [[u,v],k] \mapsto ((\linspan_\CC (u)\subseteq \linspan_\CC (u,v)), k(u\otimes v)). \]
We check that this is a well-defined map. Firstly, the linear span is unchanged by a rescaling of the 
generators by nonzero scalars. Secondly, for $z\in U(1)$ we have $[u,v]=[zu,zv]$ in the projective
Stiefel manifold, but also
\[ zu\otimes zv = zu\otimes \overline z\cdot v = z\overline z(u \otimes v) = u\otimes v. \]
Thirdly, for $z\in \TT$ we have $[[u,v],zk]=[[u,v]*z,k]=[[zu,v],k]$ but also
$zk(u\otimes v) = k(zu\otimes v)$ which completes the argument. The bundle map is an isomorphism on fibers. 

The Euler class of $\rho$ equals the first Chern class of 
the associated line bundle, which is 
$c_1 (\lambda \otimes_\CC \overline{\lambda^\prime}) = c_1(\lambda ) -c_1(\lambda^\prime )=x_1-x_2$.
\end{proof}

\begin{remark}
By the lemma above we get a sphere bundle interpretation of the projective Stiefel manifold
\[ \PSt 2 {n+1} 1 = \PSt 2 {n+1} 1 \times_\TT \TT = S(  \PSt 2 {n+1} 1 \times_\TT \CC (1)) \cong 
S(\lambda \otimes_\CC \overline{\lambda^\prime} ). \]
Thus, there is an isomorphism of left $\TT$-spaces for every $q\in \ZZ$:
\[ \PSt 2 {n+1} q \cong S\big( (\lambda \otimes_\CC \overline{\lambda^\prime} )(-q)\big) . \]
\end{remark}

For a left $\TT$-space $X$ with action map $\mu : \TT \times X \to X$, we can twist the action by the
power map $\lambda_n : \TT \to \TT ; \lambda_n (z)=z^n$ and obtain another $\TT$-space $X^{(n)}$.
Thus the underlying spaces of $X$ and $X^{(n)}$ are equal, but the action map for $X^{(n)}$ is
$\mu_n: \TT \times X^{(n)}\to X^{(n)}$; $\mu_n (z,x)=\mu (\lambda_n (x),z)$.

\begin{proposition} \label{twist}
Let $X$ be a left $\TT$-space and let $C_n$ denote the cyclic group of order $n$. 
There is a vertical and horizontal pullback of fibration sequences which is natural in $X$ as follows:
\[
\xymatrix@C=1.5 cm{
\star \ar[r] \ar[d] & X \ar@{=}[r] \ar[d] & X \ar[d] \\
BC_n \ar[r] \ar@{=}[d] & E\TT \times_\TT X^{(n)} \ar[r]^-{E(\lambda_n) \times_\TT id} \ar[d]^-{pr_1} 
& E\TT \times_\TT X \ar[d]^-{pr_1} \\
BC_n \ar[r] & B\TT \ar[r]^-{B(\lambda_n)} & B\TT  
}\]
Assume furthermore that the right $\TT$-space associated to $X$ gives a principal $\TT$-bundle $\rho: X \to X/\TT$. 
Write it as a pullback of the universal bundle
$E\TT \to B\TT$ along a map $f: X/\TT \to B\TT$. Then the right vertical projection map in the diagram
above can be replaced by $f$ in the following sense: There is a diagram, which commutes up to homotopy, and 
where $pr_2$ is a weak homotopy equivalence
\[ \xymatrix@C=1.5 cm{
E\TT \times_\TT X \ar[r]^-{pr_2}_-{\simeq} \ar[d]^-{pr_1} & X/\TT \ar[dl]^-{f} \\
B\TT
}\]
Finally, if we let $e(\rho )$ denote the Euler class, the two maps 
\[ \xymatrix@C=1.5 cm{
H^*(B\TT ;\ZZ ) \ar[r]^-{pr_1^*} & H^*(E\TT \times_\TT X^{(n)} ; \ZZ ) & H^*(X/\TT ; \ZZ ) \ar[l]_-{pr_2^*}
}\]
satisfy that 
\[ pr_1^* (nu) = pr_2^*( e(\rho )). \]
\end{proposition}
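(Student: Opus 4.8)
The plan is to rewrite both sides of $pr_1^*(nu)=pr_2^*(e(\rho))$ as pullbacks from $H^2(B\TT;\ZZ)$ along two maps $E\TT\times_\TT X^{(n)}\to B\TT$ and then show those two maps are homotopic.

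First I would record the two standard ingredients. On the one hand, $\lambda_n$ induces multiplication by $n$ on $\pi_1(\TT)\cong H_2(B\TT;\ZZ)$, so $B(\lambda_n)^*\colon H^2(B\TT;\ZZ)\to H^2(B\TT;\ZZ)$ is multiplication by $n$; equivalently $B(\lambda_n)^*(\gamma_1^\infty)\cong(\gamma_1^\infty)^{\otimes n}$, in the style of the tensor decomposition in the proof of Proposition~\ref{BasicChern}. In particular $B(\lambda_n)^*(u)=nu$. On the other hand, since $f\colon X/\TT\to B\TT$ classifies the principal $\TT$-bundle $\rho$, the line bundle associated to $\rho$ is $f^*(\gamma_1^\infty)$, so with the convention for the Euler class used in Lemma~\ref{Euler} we get $e(\rho)=c_1(f^*\gamma_1^\infty)=f^*(u)$.

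The heart of the argument is to prove that, writing $g=E(\lambda_n)\times_\TT\mathrm{id}\colon E\TT\times_\TT X^{(n)}\to E\TT\times_\TT X$, the two maps $B(\lambda_n)\circ pr_1$ and $f\circ pr_2$ from $E\TT\times_\TT X^{(n)}$ to $B\TT$ are homotopic. I would use three inputs: $pr_1\circ g=B(\lambda_n)\circ pr_1$, which is exactly the commutativity of the lower-right square of the large diagram in the statement; $pr_2\circ g=pr_2$, because $g$ does not touch the $X$-coordinate and, $\lambda_n$ being onto, $X^{(n)}$ and $X$ have the same orbit space so $pr_2$ is also defined on $E\TT\times_\TT X^{(n)}$; and the homotopy-commuting triangle $f\circ pr_2\simeq pr_1$ on $E\TT\times_\TT X$ furnished by the penultimate diagram in the statement. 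Composing that homotopy with $g$ then gives $f\circ pr_2=f\circ pr_2\circ g\simeq pr_1\circ g=B(\lambda_n)\circ pr_1$, whence
\[ pr_2^*(e(\rho))=pr_2^*(f^*u)=(f\circ pr_2)^*(u)=(B(\lambda_n)\circ pr_1)^*(u)=pr_1^*(nu). \]

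I do not expect a genuine obstacle here: the geometric content is already packaged in the two diagrams appearing in the statement, so what remains is the bookkeeping that identifies $pr_1\circ g$ and $pr_2\circ g$ with the projections of $E\TT\times_\TT X^{(n)}$, together with the two standard computations $B(\lambda_n)^*(u)=nu$ and $e(\rho)=f^*(u)$. The one point that warrants care is keeping straight which Borel construction --- $E\TT\times_\TT X$ or $E\TT\times_\TT X^{(n)}$ --- each occurrence of $pr_1$ and $pr_2$ refers to, and checking that the homotopy on $E\TT\times_\TT X$ really pulls back along $g$ to one between $B(\lambda_n)\circ pr_1$ and $f\circ pr_2$ on $E\TT\times_\TT X^{(n)}$.
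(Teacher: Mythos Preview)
Your argument for the final identity $pr_1^*(nu)=pr_2^*(e(\rho))$ is correct and is exactly how the paper deduces it: one has $f^*(u)=e(\rho)$, the triangle gives $pr_1^*(u)=pr_2^*(e(\rho))$ in $H^*(E\TT\times_\TT X)$, and pulling this back through $g=E(\lambda_n)\times_\TT\mathrm{id}$ using the lower-right square yields the stated equation on $E\TT\times_\TT X^{(n)}$. Your bookkeeping with $pr_1\circ g=B(\lambda_n)\circ pr_1$ and $pr_2\circ g=pr_2$ is precisely what the paper's phrase ``combining this with the first diagram'' encodes.

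What your proposal does not address is the rest of the proposition. The two diagrams in the statement are assertions to be proved, not hypotheses; you treat the homotopy-commutative triangle as ``furnished'' and build on it. In the paper, the bulk of the proof is devoted to establishing that triangle. The argument there lifts $f$ to a $\TT$-map $\hat f:X\to E\TT$ and thereby reduces the question to showing that the two projections $pr_1,pr_2:E\TT\times_\TT E\TT\to B\TT$ are homotopic; this in turn is checked by observing that the twist self-map of $E\TT\times_\TT E\TT$ is the identity on the fiber $\TT$ of the fibration $\TT\to E\TT\times E\TT\to E\TT\times_\TT E\TT$, hence induces the identity on $\pi_2$ and, via Hurewicz and universal coefficients, on $H^2\cong\ZZ$. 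The first diagram is handled by citing \cite{BO5} Lemma~6.1. So your plan covers only the concluding step; to complete the proof you would still need to supply (or cite) the construction of the pullback square and, more substantively, the verification that $f\circ pr_2\simeq pr_1$.
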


\begin{proof}
A proof for the first pullback diagram can be found in \cite{BO5} Lemma 6.1.
Regarding the second diagram, first note that $pr_2$ is a fibration with contractible fiber $E\TT$ and hence
a weak homotopy equivalence. In order to verify that the diagram commutes up to homotopy, it
suffices to check, that the right triangle in the following diagram commutes up to homotopy:
\[ \xymatrix@C=1.5 cm{
E\TT \times_\TT X \ar[r]^-{E\TT \times_\TT \hat f} \ar[d]^-{pr_2} & E\TT \times_\TT E\TT \ar[r]^-{pr_1} \ar[d]^-{pr_2} 
& B\TT \ar@{=}[dl]  \\
X/\TT \ar[r]^f & B\TT
}\]
Both $pr_1$ and $pr_2$ in the triangle are homotopy equivalences. By the diagrams
\[ \xymatrix@C=1.0 cm{
E\TT \times_\TT E\TT \ar[r]^-{pr_1} \ar[d]^-{tw} & B\TT \ar@{=}[d]  
& [E\TT \times_\TT E\TT, B\TT ] \ar[r]^-{\cong} \ar[d]^-{tw^*} & H^2 (E\TT \times_\TT E\TT ;\ZZ ) = \ZZ \ar[d]^-{tw^*} \\
E\TT \times_\TT E\TT \ar[r]^-{pr_1} & B\TT 
& [E\TT \times_\TT E\TT, B\TT ] \ar[r]^-{\cong} & H^2 (E\TT \times_\TT E\TT ;\ZZ ) = \ZZ 
}\]
it suffices to see that $tw^* = id : \ZZ \to \ZZ$. The twist gives a self map of the fibration
\[ \TT \to E\TT \times E\TT \to E\TT \times_\TT E\TT . \]
It is the identity on the fiber of the point $[a,a]$ since the twist also changes the sides of the actions on both factors.
By the long exact sequence of homotopy groups, one sees that $tw_*  =id$ on
$\pi_2 (E\TT \times_\TT E\TT )$. By Hurewicz and universal coefficients, the result follows for cohomology.

We have that $f^*(u)=e(\rho )$. In the second diagram of the theorem, this gives us that
$pr_1^*(u)=pr_2^*(e(\rho))$. Combining this with the first diagram, the last statement follows.
\end{proof}

\begin{proposition} \label{pr_i}
There is a commutative diagram for $i=1,2$ where $\pi_1$ and $\pi_2$ denotes projection on first and 
second factor:
\[
\xymatrix@C=1.5 cm{
\PP (\gamma_2 ) \ar[r]^-{p_i} & \CP^n \\
\PSt 2 {n+1} 1 /\TT \ar[r]^-{pr_i/\TT} \ar[u]^-{\cong} & \CP^n \ar@{=}[u]\\
E\TT \times_\TT \PSt 2 {n+1} q \ar[u]^-{\pi_2}  \ar[r]^-{E\TT \times_\TT pr_i} \ar[d]_-{\pi_1} 
& B\TT \times \CP^n \ar[u]^-{\pi_2} \ar[d]_-{\pi_1} \\
B\TT \ar@{=}[r] & B\TT
}\] 
In cohomology with $\ZZ$-coefficients, one has that
\[ (E\TT \times_\TT pr_i)^* (1\otimes x)  = \pi_2^* (x_i) \text{ and }
(E\TT \times_\TT pr_i)^* (qu\otimes 1) = \pi_2^* (x_1-x_2). \]
\end{proposition}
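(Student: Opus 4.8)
The plan is to build the commutative diagram from the bottom up, combining the $\TT$-equivariant diffeomorphism $\psi$ of Lemma \ref{Euler} (and its reformulation via \cite{BO5} Lemma 2.6 as the identification $\PSt 2 {n+1} 1/\TT \cong \PP(\gamma_2)$) with the twisting machinery of Proposition \ref{twist}. First I would record that, by the remark following Lemma \ref{Euler}, $\PSt 2 {n+1} q$ is the left $\TT$-space $\big(\PSt 2 {n+1} 1\big)^{(q)}$ in the notation of Proposition \ref{twist} (the power map $\lambda_q$ turns the free $q=1$ action into the $q$-fold one), so that Proposition \ref{twist} applies with $X = \PSt 2 {n+1} 1$, $n=q$, $X/\TT = \PSt 2 {n+1} 1/\TT$, and $f$ the classifying map of the principal $\TT$-bundle $\rho$. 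The weak equivalence $pr_2 : E\TT\times_\TT \PSt 2 {n+1} q \xrightarrow{\simeq} \PSt 2 {n+1} 1/\TT$ from that proposition is precisely the map labeled $\pi_2$ in the middle-left vertical of the displayed diagram (after identifying $\PSt 2 {n+1} 1/\TT$ with $\PP(\gamma_2)$ via $\psi$), and $pr_1$ is $\pi_1$.

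Next I would check the commutativity of the diagram square by square. The top square, $p_i$ versus $pr_i/\TT$ over $\psi$, is the statement that $\psi$ intertwines $p_i$ with the first/second-coordinate projections; this is immediate from the explicit formula for $\psi$ in Lemma \ref{Euler}, since $\psi([u,v]\TT) = (\linspan_\CC(u)\subseteq\linspan_\CC(u,v))$ sends $[u]$ to $p_1$ of the flag and $[v]$ to $p_2$ of the flag (recall $p_2(V_1\subseteq V_2) = V_1^\perp\subseteq V_2$, and for an orthonormal frame $\linspan_\CC(v)$ is exactly $\linspan_\CC(u)^\perp$ inside $\linspan_\CC(u,v)$). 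The middle square, $E\TT\times_\TT pr_i$ over $pr_2 = \pi_2$, commutes since $pr_2$ is the evident projection $E\TT\times_\TT\PSt 2 {n+1} q \to \PSt 2 {n+1} q/\TT = \PSt 2 {n+1} 1/\TT$ and $pr_i$ descends to $pr_i/\TT$ on the quotient; and the bottom square is just $pr_1 = pr_1$ on the $B\TT$-factor. So the diagram commutes (the middle one up to homotopy, which is all that is needed in cohomology).

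For the cohomology statements I would pull back along the composites visible in the diagram. For $(E\TT\times_\TT pr_i)^*(1\otimes x)$: going up the right-hand side, $1\otimes x = \pi_2^*(x)$ on $B\TT\times\CP^n$ where here $\pi_2 : B\TT\times\CP^n \to \CP^n$; chasing through the middle and top squares, $(E\TT\times_\TT pr_i)^* \pi_2^*(x) = \pi_2^* (pr_i/\TT)^* (x) = \pi_2^* \psi^* p_i^*(x) = \pi_2^*(x_i)$, using $p_i^*(x) = x_i$ from Theorem \ref{Q_n} and suppressing the identification $\psi$. For $(E\TT\times_\TT pr_i)^*(qu\otimes 1)$: now $qu\otimes 1 = \pi_1^*(qu)$, and by the last clause of Proposition \ref{twist} (with $n=q$), $pr_1^*(qu) = pr_2^*(e(\rho))$; since $e(\rho) = x_1 - x_2$ by Lemma \ref{Euler}, and $pr_1 = \pi_1$, $pr_2 = \pi_2$ in our diagram, this reads $(E\TT\times_\TT pr_i)^*(qu\otimes 1) = \pi_1^*(qu) = \pi_2^*(x_1 - x_2)$, which is independent of $i$ as claimed.

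The main obstacle is bookkeeping rather than anything deep: one must be careful that the $\TT$-space $\PSt 2 {n+1} q$ really is the twist $\big(\PSt 2 {n+1} 1\big)^{(q)}$ with the correct power (matching the $[z^{-q}u,v]$ description of the action against the power-map convention in Proposition \ref{twist}), and that the weak equivalence $pr_2$ in Proposition \ref{twist} is genuinely the map called $\pi_2$ in the diagram after the identification with $\PP(\gamma_2)$. Once those identifications are pinned down, the computation of the two pullbacks is a short diagram chase using only $p_i^*(x) = x_i$ and $e(\rho) = x_1 - x_2$, both already established.
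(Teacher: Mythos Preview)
Your proposal is correct and follows essentially the same approach as the paper's proof: verify commutativity of the diagram (the paper notes only the top square needs checking, which you do explicitly via the formula for $\psi$), then deduce the first cohomological equation from the diagram together with $p_i^*(x)=x_i$ from Theorem~\ref{Q_n}, and the second from Lemma~\ref{Euler} and the last clause of Proposition~\ref{twist}. Your write-up simply fills in more detail than the paper's three-line argument; the only extra care you take---identifying $\PSt 2 {n+1} q$ with $\big(\PSt 2 {n+1} 1\big)^{(q)}$ so that Proposition~\ref{twist} applies---is exactly what is being used implicitly there.
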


\begin{proof}
Only the top square in the diagram requires an argument and it commutes by direct verification.
The first equation follows by the diagram.
The second follows by Lemma \ref{Euler} and Proposition \ref{twist}.
\end{proof}

We can now prove the following enhanced version of \cite{BO5} Theorem 4.1:
\begin{theorem} \label{pr_i1}
Let $n$ and $q$ be integers with $n>1$ and $q>0$. Let $p$ be a prime. There is an isomorphism 
\[
H^*_\TT (\PSt 2 {n+1} q ;\FF_p )\cong 
\begin{cases}
\FF_p [x_1, x_2]/(Q_n,Q_{n+1}) , & p\nmid q, \\
\FF_p [u,x,\sigma ]/(x^{n+1}, \sigma^2 ), & p\mid q, \, p\mid (n+1),\\
\FF_p [u,x,\overline \sigma ]/(x^{n}, \overline \sigma^2 ), & p\mid q, \, p\nmid (n+1),
\end{cases}
\]
where the classes $u$, $x$, $x_1$, $x_2$ have degree $2$ and $\deg (\sigma ) = 2n-1$, $deg(\overline \sigma )=2n+1$.
The polynomials $Q_k\in \FF_p [x_1, x_2]$ are defined as follows for positive integers $k$:
\[ Q_k (x_1, x_2) = \sum_{i=0}^k x_1^i x_2^{k-i} . \]
The maps
\[ pr_i^* : H^* (B\TT \times \CP^n ; \FF_p ) \to H^*_\TT (\PSt 2 {n+1} q ; \FF_p ) \]
are given by the following for $i=1, 2$:
\begin{align*}
u\otimes 1 &\mapsto \frac 1 q (x_1-x_2), & 1\otimes x &\mapsto x_i, & \text{for } p & \nmid q, \\
u\otimes 1 &\mapsto u, & 1\otimes x &\mapsto x, & \text {for } p &\mid q.
\end{align*}
\end{theorem}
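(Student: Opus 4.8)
The plan is to treat the three cases $p\nmid q$ and $p\mid q$ separately, building everything out of the projective bundle picture already assembled. First consider $p\nmid q$. By Proposition \ref{twist}, applied to $X=\PSt 2 {n+1} 1$ with the twisting power map $\lambda_q$, there is a homotopy pullback identifying $E\TT\times_\TT \PSt 2 {n+1} q$ with a space fitting over $B\TT$ via $B(\lambda_q)$, together with a weak equivalence $pr_2:E\TT\times_\TT \PSt 2 {n+1} 1 \simeq \PSt 2 {n+1} 1/\TT\cong \PP(\gamma_2)$. When $p\nmid q$, multiplication by $q$ is an isomorphism on $\FF_p$-cohomology of $B\TT$, so $B(\lambda_q)$ induces an isomorphism on mod $p$ cohomology; chasing this through the pullback square of Proposition \ref{twist} gives $H^*_\TT(\PSt 2 {n+1} q;\FF_p)\cong H^*_\TT(\PSt 2 {n+1} 1;\FF_p)\cong H^*(\PP(\gamma_2);\FF_p)$, which by Theorem \ref{Q_n} is $\FF_p[x_1,x_2]/(Q_n,Q_{n+1})$. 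The formula $pr_i^*(1\otimes x)=x_i$ is exactly the first equation of Proposition \ref{pr_i}, and $pr_i^*(u\otimes 1)=\tfrac1q(x_1-x_2)$ follows from the second equation of Proposition \ref{pr_i} after dividing by $q$ (legitimate in $\FF_p$ since $p\nmid q$).

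For the case $p\mid q$, the twisting map $B(\lambda_q)$ is nullhomotopic mod $p$ in low degrees, so the Borel construction decouples: Proposition \ref{twist} exhibits $E\TT\times_\TT \PSt 2 {n+1} q$ as fibering over $B\TT$ with fiber the underlying space $\PStnull 2 {n+1}$, and when $p\mid q$ the relevant $k$-invariant vanishes mod $p$, so the Serre spectral sequence of $\PStnull 2 {n+1}\to E\TT\times_\TT\PSt 2 {n+1} q\to B\TT$ collapses and we get $H^*_\TT(\PSt 2 {n+1} q;\FF_p)\cong H^*(B\TT;\FF_p)\otimes H^*(\PStnull 2 {n+1};\FF_p)$ as a module. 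The mod $p$ cohomology of the projective Stiefel manifold $\PStnull 2 {n+1}$ is classical: from the sphere-bundle description $\PStnull 2 {n+1}\cong S(\lambda\otimes_\CC\overline{\lambda'})$ over $\PP(\gamma_2)$ (the Remark after Lemma \ref{Euler}), or more directly from the fibration $S^1\to \PStnull 2 {n+1}\to \PP(\gamma_2)$ with Euler class $x_1-x_2$, one runs the Gysin sequence; whether the Euler class is a nonzerodivisor mod $p$ distinguishes the two subcases $p\mid(n+1)$ and $p\nmid(n+1)$, and produces the exterior generator $\sigma$ in degree $2n-1$ or $\overline\sigma$ in degree $2n+1$ respectively, together with the stated polynomial part $\FF_p[x]/(x^{n+1})$ or $\FF_p[x]/(x^n)$. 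Combined with the polynomial generator $u$ from the base, this gives the two displayed rings. The maps $pr_i^*$ send $1\otimes x\mapsto x_i$, which collapse to a single class $x$ mod $p$ when $p\mid q$ (since $x_1-x_2$ becomes the Euler class of $\rho$, and $qu\equiv 0$ forces $x_1\equiv x_2$ in the appropriate range by Proposition \ref{pr_i}); and $u\otimes 1\mapsto u$ because $pr_1$ in the Borel construction is the projection onto the $B\TT$ factor up to homotopy.

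I expect the main obstacle to be the ring structure in the case $p\mid q$, not the module structure: the Serre and Gysin spectral sequences readily give the additive answer, but identifying $\sigma^2=0$ (resp. $\overline\sigma^2=0$) and pinning down that the only relation in the polynomial part is $x^{n+1}$ (resp. $x^n$) requires care — one must check there are no correction terms and that the class $x$ restricted from $\CP^n$ is the right generator. I would handle this by comparing with the known cohomology ring of $\PStnull 2 {n+1}$ (available in the literature on projective Stiefel manifolds, and consistent with \cite{BO5} Theorem 4.1 which this statement enhances), and by using multiplicativity of the Gysin sequence together with the fact that $\deg\sigma$ is odd so $\sigma^2$ lies in a degree where the cohomology is already accounted for by the polynomial part, forcing it to vanish for degree-parity reasons once $n>1$. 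The compatibility of the two descriptions across the $p\mid q$ / $p\nmid q$ boundary is automatic since all maps in sight are natural.
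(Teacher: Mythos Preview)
Your $p\nmid q$ argument is essentially the paper's: both invoke Proposition~\ref{twist} and the identification of $E\TT\times_\TT \PSt 2 {n+1} 1$ with $\PP(\gamma_2)$, then read off the ring from Theorem~\ref{Q_n} and the $pr_i^*$ formulas from Proposition~\ref{pr_i}. The paper phrases the passage from $q$ to $1$ via the horizontal fibration with fiber $BC_q$ (which has trivial $\FF_p$-cohomology when $p\nmid q$), while you use the vertical comparison along $B(\lambda_q)$; these are two readings of the same pullback square.

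For $p\mid q$ you take a genuinely different route. The paper runs the Serre spectral sequence of the \emph{horizontal} fibration $BC_q \to E\TT\times_\TT \PSt 2 {n+1} q \to \PP(\gamma_2)$: comparison with the lower row $BC_q\to B\TT\to B\TT$ of Proposition~\ref{twist} gives $d_2(v)=x_1-x_2$, $d_2(w)=0$, and $E_3=E_\infty$ immediately, after which the kernel/cokernel of multiplication by $x_1-x_2$ (taken from \cite{BO5}) yields the two subcases, and the extensions $\sigma^2=0$, $\overline\sigma^2=0$ are forced by vanishing of specific $E_\infty^{s,t}$ groups. You instead use the \emph{vertical} Borel fibration over $B\TT$ with fiber $\PStnull 2 {n+1}$, computing the fiber via Gysin first. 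This can be made to work and involves the same kernel/cokernel computation, but two of your steps are not yet arguments. First, ``the relevant $k$-invariant vanishes mod $p$'' does not prove collapse; you need the explicit naturality comparison along $B(\lambda_q)$ together with $H^*(B\TT)$-linearity of the differentials to push vanishing from the zeroth column to the whole page. Second, ``degree-parity'' does not force $\sigma^2=0$ when $p=2$; you would instead need to observe that the even-degree cohomology of $\PStnull 2 {n+1}$ in degree $4n-2$ (respectively $4n+2$) already vanishes, which is a consequence of the cokernel computation rather than of parity. The paper's horizontal spectral sequence handles both issues in one stroke: the differentials come for free from the comparison fibration, and the filtration bookkeeping settles the multiplicative extensions without appeal to parity.
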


\begin{proof}
The computation of the cohomology ring is given in \cite{BO5} Theorem 4.1. We review parts of the proof in order
to include the description of the projection maps.

By proposition \ref{twist}, we have a pullback of fibration sequences
\[ \xymatrix@C=1.5 cm{
BC_q \ar[r] \ar@{=}[d] & E\TT \times_\TT \PSt 2 {n+1} q \ar[r]^-{\pi_2} \ar[d]^-{\pi_1} 
& \PSt 2 {n+1} 1 /\TT  \ar[d]^-{f} \\
BC_q \ar[r] & B\TT \ar[r]^-{B(\lambda_q)} & B\TT  
}\] 
Consider the associated Serre spectral sequences. We have trivial coefficients in both of these since the base of 
the lower fibration is simply connected. 

Assume that $p\nmid q$. Then, $H^*(BC_q; \FF_p)=\FF_p$, 
and by the upper spectral sequence $\pi_2$ induces an isomorphism in cohomology. 
The results follows by Theorem \ref{Q_n} and Proposition \ref{pr_i} via universal coefficients.

Assume that $ p\mid q$. One has that $H^* ( BC_q; \FF_p )= \FF_p [v,w]/I_{p,q}$, where the degrees are
$|v|=1$, $|w|=2$
and $I_{p,q}$ is the ideal $(v^2-w)$ for $p=2$, $4\nmid q$ and the ideal $(v^2)$ otherwise.
The $E_2$-page of the Serre spectral sequence for the upper fibration has the form
\[ E_2^{**} = \FF_p [x_1,x_2]/(Q_n, Q_{n+1}) \otimes  \FF_p [v,w]/I_{p,q}, \]
where the bi-degrees are $||x_1||=||x_2||=(2,0)$, $||v||=(0,1)$, $||w||=(0,2)$. Via the spectral sequence
for the lower fibration sequence, one finds that $d_2(w)=0$, $d_2(v)=x_1-x_2$ and that $w$ is a 
permanent cycle. It follows that $E_3=E_\infty$.

We let $K_n$ and $C_n$ denote the kernel and cokernel of multiplication with $(x_1-x_2)$ on
$\FF_p [x_1,x_2]/(Q_n, Q_{n+1})$. Then
\[ E_\infty^{**}=E_3^{**} = (C_n\oplus vK_n)\otimes \FF_p [w].\]
In \cite{BO5}, proof of Theorem 4.1, the kernel and cokernel are analyzed further, and one obtains
the following bigraded algebra description of the $E_\infty$-page:
\[ E_\infty^{**} = \begin{cases}
\FF_p [w,x_1,\sigma ]/(x_1^{n+1}, \sigma^2 ), & p\mid (n+1), \\
 \FF_p [w,x_1,\overline \sigma ]/(x_1^{n}, \overline \sigma^2 ), & p\nmid (n+1). 
\end{cases} \]
Here $x_1$ denotes the class $[x_1]$ which equals the class $[x_2]$ since $d_2(v) =x_1-x_2$. 
The generators $\sigma$ and $\overline \sigma$ are represented by $v$ multiplied by explicit polynomials in 
$x_1$ and $x_2$. The bidegrees are $||\sigma ||= (2n-2,1)$ and $||\overline \sigma ||=(2n,1)$.

The cohomology class $\pi_2^*(x_1)$, which equals $\pi_2^*(x_2)$ by Proposition \ref{pr_i} since $p\mid q$,
represents $x_1$ in the spectral sequence. Since the complementary degree of $x_1$ is zero, the algebra structure
of the spectral sequence gives us that $\pi_2^*(x_1)^{n+1}=0$ for $p\mid (n+1)$ and 
$\pi_2^*(x_1)^n=0$ for $p\nmid (n+1)$. 
By the left square in the diagram above, we get that the cohomology class $\pi_1^*(u)$ represents $w$ in the 
spectral sequence.

For $p\mid (n+1)$, $\sigma$ defines uniquely an unfiltered cohomology class since we have $E_\infty^{2n-1,0}=0$.
This class has $\sigma^2=0$ in the filtered quotient but since $E_\infty^{4n-3,1}=E_\infty^{4n-2,0}=0$ this is
also true in the actual cohomology ring.
Similarly, for $p\nmid (n+1)$, $\overline \sigma$ defines uniquely an unfiltered cohomology class with 
$\overline \sigma^2=0$ since $E_\infty^{2n+1,0}=0$ and 
$E_\infty^{4n+1,1} = E_\infty^{4n+2,0}=0$.

Thus for $p\mid (n+1)$ we have a homomorphism of graded rings as follows:
\begin{align*}
& \FF_p [u,x,\sigma]/ (x^{n+1},\sigma^2 ) \to H_\TT^* (\PSt 2 {n+1} q ; \FF_p ) ; \\
& u\mapsto \pi_1^*(u), \quad x \mapsto \pi_2^* (x_1)=\pi_2^*(x_2), \quad \sigma \mapsto \sigma
\end{align*}
The homomorphism induces an isomorphism on associated graded objects, and therefore it is an isomorphism of rings.
By this isomorphism and Proposition \ref{pr_i} we have that 
$pr_i^* (1\otimes x) = \pi_2^* (x_1)= \pi_2^* (x_2)=x$ and 
$pr_i^* (u\otimes 1) = \pi_1^*(u) = u$ as desired. Similarly for $p\nmid (n+1)$.
\end{proof}

\begin{theorem}  \label{Chern_mu}
Let $n$, $q$ and $r$ be integers with $n>1$ and $q>0$. Let $p$ be a prime. Assume that $r=q$ mod $2$. 
Define two polynomials
\begin{align*}
& P(x_1,x_2)=(1+\frac {r+q} {2q} (x_1-x_2))(1+\frac {r-q} {2q}(x_1-x_2)), \\
& R(u)=(1+\frac {r+q} {2} u)(1+\frac {r-q} {2} u).
\end{align*}
In mod $p$ cohomology, we have total Chern classes as follows: If $p\nmid q$,
\begin{equation*}
c((\nu_{r,q})_{h\TT}) = \frac {(1+\frac {r+q} {2q} (x_1-x_2)-x_1)^{n+1}}  {P(x_1,x_2)},
\end{equation*}
\begin{equation*}
c((\overline \nu_{r,q})_{h\TT})=\frac {(1+\frac {r+q} {2q} (x_1-x_2)+x_2)^{n+1}} {P(x_1,x_2)}
\end{equation*}
and if $p\mid q$,
\begin{equation*}
c((\nu_{r,q})_{h\TT}) =\frac {(1+\frac {r+q} {2} u-x)^{n+1}} {R(u)} , \quad
 c((\overline \nu_{r,q})_{h\TT})=\frac {(1+\frac {r+q} {2} u+x)^{n+1}} {R(u)}.
\end{equation*}
\end{theorem}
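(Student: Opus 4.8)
The plan is to pass to Borel constructions in the bundle isomorphisms of Theorem~\ref{sumiso}, expand the resulting identity of total Chern classes, and substitute the first Chern classes of the line bundles $L_{j,r,q}$, which one computes by combining Proposition~\ref{lbpb}, Proposition~\ref{BasicChern}, Remark~\ref{BasicChern1} and Theorem~\ref{pr_i1}.

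First I would apply $(-)_{h\TT}=E\TT\times_\TT(-)$ to the isomorphisms of Theorem~\ref{sumiso}. By Proposition~\ref{Borel_vb_prop} the Borel construction respects Whitney sums, so
\[
(\nu_{r,q})_{h\TT}\oplus(L_{1,r,q})_{h\TT}\oplus(L_{2,r,q})_{h\TT}\cong(L_{0,r,q})_{h\TT}^{\oplus(n+1)},
\]
and likewise for the conjugated bundles. Multiplicativity of the total Chern class then gives
\[
c\big((\nu_{r,q})_{h\TT}\big)\,c\big((L_{1,r,q})_{h\TT}\big)\,c\big((L_{2,r,q})_{h\TT}\big)=c\big((L_{0,r,q})_{h\TT}\big)^{n+1},
\]
together with its barred counterpart. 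Since the $L_{j,r,q}$ are line bundles, $c((L_{j,r,q})_{h\TT})=1+c_1((L_{j,r,q})_{h\TT})$, so the theorem will follow once these first Chern classes are known and we divide.

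To compute them, recall from Proposition~\ref{lbpb} that each $L_{j,r,q}$ is the pullback along one of the maps $pr_i$ of a $\TT$-line bundle $\epsilon^1(m)$ or $\overline{\gamma_1}(m)$ on $\CP^n$ with $m=\frac{r\pm q}{2}$; since $(-)_{h\TT}$ also respects pullbacks, $(L_{j,r,q})_{h\TT}$ is the corresponding pullback along $(pr_i)_{h\TT}$ over $B\TT\times\CP^n$. The first Chern classes of $(\epsilon^1(m))_{h\TT}$ and $(\overline{\gamma_1}(m))_{h\TT}$ over $B\TT\times\CP^n$ are given by Proposition~\ref{BasicChern} and Remark~\ref{BasicChern1}, and the maps $pr_i^*$ are computed in Theorem~\ref{pr_i1} (with $u\otimes 1\mapsto\frac1q(x_1-x_2)$, $1\otimes x\mapsto x_i$ when $p\nmid q$, and $u\otimes 1\mapsto u$, $1\otimes x\mapsto x$ when $p\mid q$). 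Carrying out the substitution yields, for $p\nmid q$,
\[
c((L_{1,r,q})_{h\TT})\,c((L_{2,r,q})_{h\TT})=P(x_1,x_2),\qquad c_1((L_{0,r,q})_{h\TT})=\tfrac{r+q}{2q}(x_1-x_2)-x_1,
\]
and for $p\mid q$ the same formulas with $x_1-x_2$ replaced by $qu$ and $x_1$ by $x$, so that the product of the two factors becomes $R(u)$; the conjugated bundles are handled identically from the remaining pullback diagrams of Proposition~\ref{lbpb}, giving $c_1((\overline{L}_{0,r,q})_{h\TT})=\frac{r+q}{2q}(x_1-x_2)+x_2$ (resp.\ $\frac{r+q}{2}u+x$). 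Plugging these into the Chern-class identities above produces the four displayed expressions, once one observes that $P(x_1,x_2)$ (when $p\nmid q$) and $R(u)$ (when $p\mid q$) have constant term $1$, hence are respectively a unit in the finite-dimensional ring $\FF_p[x_1,x_2]/(Q_n,Q_{n+1})$ and a non-zero-divisor in $H^*_\TT(\PSt 2{n+1}q;\FF_p)$, so the quotients are well-defined and equal $c((\nu_{r,q})_{h\TT})$ and $c((\overline{\nu}_{r,q})_{h\TT})$.

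There is no real conceptual difficulty here: the argument simply glues together the earlier results via the multiplicativity of Chern classes and the compatibility of the Borel construction with Whitney sums and pullbacks. The main point that needs care is the bookkeeping of the shifts $\frac{r\pm q}{2}$ against the index $i$ in Proposition~\ref{lbpb}; in particular one must check that the two pullback presentations of $L_{0,r,q}$ — along $pr_1$ versus along $pr_2$ — give the same first Chern class (the identity $\frac{r+q}{2q}(x_1-x_2)-x_1=\frac{r-q}{2q}(x_1-x_2)-x_2$), and similarly for $\overline{L}_{0,r,q}$, together with the minor matter of how to interpret the division by $R(u)$ in the case $p\mid q$.
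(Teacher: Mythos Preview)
Your argument is correct and follows essentially the same route as the paper: apply the Borel construction to the splitting of Theorem~\ref{sumiso}, read off the first Chern classes of the line bundles $(L_{j,r,q})_{h\TT}$ and $(\overline L_{j,r,q})_{h\TT}$ via Proposition~\ref{lbpb}, Proposition~\ref{BasicChern}/Remark~\ref{BasicChern1} and Theorem~\ref{pr_i1}, and then divide. Your explicit remark on the invertibility of $P(x_1,x_2)$ and $R(u)$ and the consistency check for the two pullback presentations of $L_{0,r,q}$ (and $\overline L_{0,r,q}$) are exactly the small points the paper leaves implicit.
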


\begin{proof}
Put $s_i = \frac 1 2 (r+(-1)^{i+1}q)$ for $i=1,2$.
By Proposition \ref{BasicChern} and Remark \ref{BasicChern1} we have that 
\[ 
c_1(\overline \gamma_1 (s_{i})_{h\TT}) =
s_{i} u\otimes 1 -1\otimes x, \quad c_1(\epsilon^1(s_{i})_{h\TT}) = s_{i}u\otimes 1.
\]
Assume that $p\nmid q$. From the pullbacks in Proposition \ref{lbpb} and from Theorem \ref{pr_i1} we get first
Chern classes
\[ c_1\big( ((L_0)_{r,q})_{h\TT} \big) = \frac {s_{i}} q (x_1-x_2)-x_i, \quad
 c_1\big( ((L_i)_{r,q})_{h\TT} \big) = \frac {s_{i}} q  (x_1-x_2).
\]
Note that since $s_1/q (x_1-x_2)-x_1=s_2/q(x_1-x_2)-x_2$ there is no contradiction in the first equation.
By the direct sum decomposition in Theorem \ref{sumiso}, the formula for the total Chern class of $(\nu_{r,q})_{h\TT}$
follows. By a similar argument, we get the formula for the total Chern class of $(\overline \nu_{r,q})_{h\TT}$.

Assume that $p\mid q$. In this case Proposition \ref{lbpb} and Theorem \ref{pr_i1} gives us first Chern classes
$s_i u-x$ and $s_iu$ respectively, and via Theorem \ref{sumiso}, the formula for the total Chern class of
$(\nu_{r,q})_{h\TT}$ follows. Similarly for $(\overline \nu_{r,q})_{h\TT}$.
\end{proof}
We can now prove our second main result regarding the bundles $\mu_q^- \to \B q n$.
\begin{theorem} \label{main2}
Let $n$ and $q$ be integers with $n>1$ and $q>0$. Let $p$ be a prime. In cohomology with mod $p$ coefficients, 
we have total Chern classes as follows: For $p\nmid q$,
\begin{align*}
 c((\mu_q^-)_{h\TT}) = \prod_ {0<s<q}  &(1+\frac s q (x_ 1-x_2)) \cdot \\
\prod_{\substack{0<r<q \\ r = q \text{ mod } 2}} 
& \frac {\big( (1+\frac {r+q} {2q} (x_1-x_2)-x_1)(1+\frac {r+q} {2q} (x_1-x_2)+x_2)\big)^{n+1}}
{\big( (1+\frac {r+q} {2q} (x_1-x_2))(1+\frac {r-q} {2q}(x_1-x_2))\big)^2}. 
\end{align*} 
For $p\mid q$,
\begin{equation*}
 c((\mu_q^-)_{h\TT}) = \prod_ {0<s<q} (1+su) 
\prod_{\substack{0<r<q \\ r = q \text{ mod } 2}} 
\frac {\big( (1+\frac {r+q} {2} u-x)(1+\frac {r+q} {2} u+x)\big)^{n+1}}
{\big( (1+\frac {r+q} {2} u)(1+\frac {r-q} {2}u) \big)^2}. 
\end{equation*} 
\end{theorem}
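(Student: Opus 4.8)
The plan is to apply the Borel construction to the $\TT$-equivariant splitting of $\mu_q^-$ supplied by Theorem \ref{main1} and then feed the pieces into the Chern class computations already available. By Proposition \ref{Borel_vb_prop} the functor $E\TT\times_\TT(-)$ carries that splitting into a Whitney sum, and the total Chern class is multiplicative over Whitney sums, so
\[
c((\mu_q^-)_{h\TT}) = c((\epsilon_q(\RR))_{h\TT})\cdot\prod_{0<s<q} c((\epsilon_q(\CC(s)))_{h\TT})\cdot\prod_{\substack{0<r<q\\ r=q \text{ mod } 2}} c((\nu_{r,q})_{h\TT})\,c((\overline\nu_{r,q})_{h\TT}),
\]
together with one further factor $c((\nu_{0,q})_{h\TT})$ when $q$ is even. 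It then suffices to identify each factor.

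The summand $\epsilon_q(\RR)=\PSt 2 {n+1} q\times\RR$ has trivial $\TT$-action in the fibre, so $(\epsilon_q(\RR))_{h\TT}$ is a trivial real line bundle and contributes $1$ to the total Chern class. For $\epsilon_q(\CC(s))$ the $\TT$-action is diagonal, hence $E\TT\times_\TT(\PSt 2 {n+1} q\times\CC(s))\cong\pi_1^*(E\TT\times_\TT\CC(s))$, where $\pi_1:(\PSt 2 {n+1} q)_{h\TT}\to B\TT$ is the classifying map appearing in Proposition \ref{pr_i}. The claim proved inside Proposition \ref{BasicChern} gives $c_1(E\TT\times_\TT\CC(s))=su$, and combining Proposition \ref{pr_i} with the ring descriptions of Theorem \ref{pr_i1} shows that $\pi_1^*$ sends $u$ to $\tfrac{1}{q}(x_1-x_2)$ when $p\nmid q$ and to $u$ when $p\mid q$. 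Hence $c((\epsilon_q(\CC(s)))_{h\TT})$ equals $1+\tfrac{s}{q}(x_1-x_2)$ or $1+su$ according as $p\nmid q$ or $p\mid q$, which produces the first product in each displayed formula.

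For the remaining summands the work is already done: Theorem \ref{Chern_mu} gives $c((\nu_{r,q})_{h\TT})$ and $c((\overline\nu_{r,q})_{h\TT})$ explicitly, for $r=0$ as well as for $0<r<q$. Grouping the two bundles belonging to a fixed $r$, one multiplies the two fractions of Theorem \ref{Chern_mu}; since their common denominator is $P(x_1,x_2)$ and $P(x_1,x_2)^2=\big((1+\tfrac{r+q}{2q}(x_1-x_2))(1+\tfrac{r-q}{2q}(x_1-x_2))\big)^2$ (and similarly $R(u)^2=\big((1+\tfrac{r+q}{2}u)(1+\tfrac{r-q}{2}u)\big)^2$ when $p\mid q$), the product is exactly the $r$-indexed factor of Theorem \ref{main2}. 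Substituting the three kinds of factors into the product above yields the asserted total Chern classes; when $q$ is even one additionally inserts the factor obtained from Theorem \ref{Chern_mu} by setting $r=0$.

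The argument is essentially bookkeeping on top of Theorems \ref{main1}, \ref{pr_i1} and \ref{Chern_mu}, so I expect no real obstacle. The one step that needs genuine care is the treatment of the product bundles $\epsilon_q(\CC(s))$: one must verify that their Borel constructions are pullbacks along $\pi_1$ and, in the case $p\nmid q$, correctly trade the class $u$ of Proposition \ref{BasicChern} for $\tfrac{1}{q}(x_1-x_2)$ via Proposition \ref{pr_i} and Theorem \ref{pr_i1}. A secondary, purely cosmetic, point is to make sure the real summand $\epsilon_q(\RR)$ and, for $q$ even, the summand $\nu_{0,q}$ are accounted for in the final formula.
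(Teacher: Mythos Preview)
Your proposal is correct and follows essentially the same route as the paper: decompose via Theorem~\ref{main1}, pass to homotopy orbits by Proposition~\ref{Borel_vb_prop}, dispose of $\epsilon_q(\RR)$ trivially, compute $c_1$ of the $\epsilon_q(\CC(s))$ summands by pulling back from the classifying space and invoking Proposition~\ref{BasicChern} together with Theorem~\ref{pr_i1}, and read off the $\nu$-contributions from Theorem~\ref{Chern_mu}. The only cosmetic difference is that the paper views $\epsilon_q(\CC(s))$ as the pullback of $\epsilon^1(s)\to\CP^n$ along $pr_i$ (so that $c_1$ lands in $H^*(B\TT\times\CP^n)$ and is then pushed through $(pr_i)_{h\TT}^*$), whereas you pull back directly from $B\TT$ along $\pi_1$; both routes yield the same class by Proposition~\ref{pr_i}. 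Your explicit flag about the extra $\nu_{0,q}$ summand for even $q$ is well taken and is handled in the paper exactly as you suggest, by appealing to Theorem~\ref{Chern_mu} with $r=0$.
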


\begin{proof}
We use the direct sum decomposition from Theorem \ref{main1} which also gives a 
direct sum decomposition after forming $\TT$-homotopy orbit bundles according to Proposition \ref{Borel_vb_prop}. 

The bundle $\epsilon_q (\RR )_{h\TT}$ is trivial so its Chern classes are zero. 
The $\TT$-vector bundle $\epsilon_q (\CC (s))$ is the pullback of $\CP^n \times \CC (s) \to \CP^n$ 
along $pr_i: \PSt 2 {n+1} q \to \CP^n$ both for $i=1$ and $i=2$. 
So by Proposition \ref{BasicChern} and Theorem \ref{pr_i1} we have
\[ c_1(\epsilon_q (\CC (s))_{h\TT} )=pr_i^* (su\otimes 1) = \begin{cases}
\frac s q (x_1-x_2), & p \nmid q, \\
su, & p\mid q.
\end{cases} \]
Theorem \ref{Chern_mu} above gives us the Chern classes of the remaining summands.
\end{proof}


\begin{thebibliography}{MMMM}

\bibitem{Atiyah} Atiyah M. F., $K$-theory, 
Lecture notes by D. W. Anderson, W. A. Benjamin, Inc., New York-Amsterdam 1967.


\bibitem{BHM} B\" okstedt M., Hsiang W.C., Madsen I., 
The cyclotomic trace and algebraic $K$-theory of spaces, Invent. Math. 111 (1993), 465--539.



 

\bibitem{BO5} B\"o{}kstedt M., Ottosen I., String cohomology groups of complex
projective spaces, Algebraic \& Geometric Topology 7 (2007), 2165-2238.



\bibitem{tomD1} tom Dieck T., Transformation groups, de Gruyter 
Studies in Mathematics 8, Walter de Gruyter \& Co., Berlin 1987. 



\bibitem{GHL} Gallot S., Hulin D., Lafontaine J., Riemannian geometry, 
Second edition, Universitext, Springer-Verlag, Berlin 1990. 


\bibitem{Hatcher} Hatcher A., Vector Bundles and $K$-Theory, \newline
http://www.math.cornell.edu/\textasciitilde hatcher

\bibitem{Husemoller} Husemoller D., Fibre bundles, Third edition, 
Graduate Texts in Mathematics 20, Springer-Verlag, New York 1994.



\bibitem{Klingenberg} Klingenberg W., Lectures on closed geodesics.
Grundlehren der Math. Wiss. vol. 230, Springer Verlag, Berlin Heidelberg
New York 1978.

\bibitem{KlingProj} Klingenberg W., 
The space of closed curves on a projective space,
Quart. J. Math. Oxford Ser(2) 20 (1969), 11--31.

\bibitem{KlingSphere} Klingenberg W., 
The space of closed curves on the sphere, Topology 7 (1968), 395--415.

\bibitem{KN2} Kobayashi S., Nomizu K., Foundations of Differential
Geometry, Volume 2 (1969), Wiley and Sons.




\bibitem{MT} Madsen I., Tornehave J., From Calculus to Cohomology,
de Rham cohomology and characteristic classes,
Cambridge university press, Cambridge 1997.





\bibitem{NEH} Ndombol B., El Haouari M., The free loop space equivariant cohomology algebra of 
some formal spaces, Math. Z. 266 (2010), 863-875.







\bibitem{Ziller} Ziller W., The free loop space of globally 
symmetric spaces, Invent. Math. 41 (1977), 1--22.

\end{thebibliography}
\end{document}